\documentclass[11pt,reqno,titlepage]{amsart}
\usepackage{tikz}
\addtolength{\textheight}{-0.75in}
\textheight    22cm
\textwidth     15.cm
\oddsidemargin   .4cm
\evensidemargin  .4cm
\parskip 6pt
\usepackage{subfig}
\usepackage{epstopdf}
\usepackage{epsfig}
\usepackage{lipsum}
\usepackage{hyperref}
\usepackage{amsthm}
\usepackage{url}
\usepackage{amsmath}
\usepackage{amsfonts}
\usepackage{lastpage}
\usepackage{graphicx}
\usepackage{multirow}
\usepackage{tabu}
\usepackage{ams-essay-def}
\usepackage{clrscode}
\usepackage{caption}
\usepackage{appendix}
\usepackage{mathrsfs}
\usepackage{algorithm}
\usepackage{algorithmic}
\usepackage{natbib}

\usepackage[foot]{amsaddr}

\usepackage{tikz}
\usetikzlibrary{shapes,arrows}

\tikzstyle{decision} = [diamond, draw, fill=blue!20, 
    text width=4.5em, text badly centered, node distance=3cm, inner sep=0pt]
\tikzstyle{block} = [rectangle, draw, fill=blue!20, 
    text width=5em, text centered, rounded corners, minimum height=4em]
\tikzstyle{line} = [draw, -latex']
\tikzstyle{cloud} = [draw, ellipse,fill=red!20, node distance=3cm,
    minimum height=2em]

\usetikzlibrary{positioning}
\tikzset{main node/.style={circle,fill=blue!20,draw,minimum size=1cm,inner sep=0pt},  }

\makeatletter
\g@addto@macro{\endabstract}{\@setabstract}
\newcommand{\authorfootnotes}{\renewcommand\thefootnote{\@fnsymbol\c@footnote}}%
\makeatother

\newcommand{\titlemark}{Accelerated Information Gradient flow}
\title{\textbf{\titlemark}}

\begin{document}
\begin{center}
  \LARGE 
  \titlemark
  \par \bigskip

  \normalsize
  \authorfootnotes
  Yifei Wang\footnote{\texttt{wangyf18@stanford.edu}}\textsuperscript{1} and Wuchen Li\footnote{\texttt{wuchen@mailbox.sc.edu}}\textsuperscript{2},
 \par \bigskip
 \textsuperscript{1}Department of Electrical Engineering, Stanford University\par
  \textsuperscript{2}Department of Mathematics, University of South Carolina \\
\par \bigskip
\end{center}

\begin{abstract}
We present a framework for Nesterov's accelerated gradient flows in probability space to design efficient mean-field Markov chain Monte Carlo (MCMC) algorithms for Bayesian inverse problems. Here four examples of information metrics are considered, including Fisher-Rao metric, Wasserstein-2 metric, Kalman-Wasserstein metric and Stein metric. For both Fisher-Rao and Wasserstein-2 metrics, we prove convergence properties of accelerated gradient flows. In implementations, we propose a sampling-efficient discrete-time algorithm for Wasserstein-2, Kalman-Wasserstein and Stein accelerated gradient flows with a restart technique. We also formulate a kernel bandwidth selection method, which learns the gradient of logarithm of density from Brownian-motion samples. Numerical experiments, including Bayesian logistic regression and Bayesian neural network, show the strength of the proposed methods compared with state-of-the-art algorithms.

\smallskip
\noindent \textbf{Keywords.} Nesterov's accelerated gradient method; Bayesian inverse problem;  Optimal transport; Information geometry\end{abstract}

\section{Introduction}
Optimization problems in probability space, arising from Bayesian inference \cite{SVGD} and inverse problems \cite{ivabp}, attract increasing attentions in machine learning communities \cite{afomo,lmcaj,plarc}. One typical example here is to draw samples from an intractable target distribution. Such sampling problem is important in providing exploration in distribution of interest and quantifying uncertainty among data. From an optimization viewpoint, this problem suffices to minimize an objective functional, such as Kullback-Leibler (KL) divergence, which is to measure the closeness between current density and the target distribution. %The algorithm is to find a representative from density evolution to sampling formulations.  

Gradient descent methods play essential roles in solving these optimization problems. Here the gradient direction relies on the information metric in probability space. In literature, two important metrics, such as Fisher-Rao metric and Wasserstein-2 (in short, Wasserstein) metric, are of great interests \cite{tdmac,ngwei,tgode}. %The evolution in densities corresponds to the update in samples, which is known as sampling formulation or particle implementation. This yields various sampling algorithms. 
The information gradient direction in terms of density corresponds to the update rule in a set of samples. This is known as sampling formulation or particle implementation of gradient flow, which yields various sampling algorithms. 
For Fisher-Rao metric, its gradient flow relates to birth-death dynamics, which is important in model selection and modeling population games \cite{igaia}. The Fisher-Rao gradient, also known as natural gradient, is also useful in designing fast and reliable algorithms in probability models \cite{ngwei,adam,ngfmm, onnwk}. For Wasserstein metric, the gradient flow of KL divergence is the Fokker-Planck equation of overdamped Langevin dynamic. In sampling algorithms, the time discretization of overdamped Langevin dynamics yields the classical Langevin Markov chain Monte Carlo (MCMC) method and the proximal Langevin algorithm \cite{lmcaj, plarc}. In recent years, various first-order sampling methods via generalized Wasserstein gradient direction are proposed. For example, the Stein variational gradient descent \cite{SVGD} formulates kernelized interacting Langevin dynamics. The Kalman-Wasserstein gradient, also known as ensemble Kalman sampling \cite{ild}, induces covariance-preconditioned mean-field interacting Langevin dyanmics.

For classical optimization problems in Euclidean space, the Nesterov's accelerated gradient method \cite{nesterov} is a wide-applied optimization method and it accelerates gradient descent methods. The continuous-time limit of this method is known as the accelerated gradient flow \cite{adefm}. Natural questions arise: \textit{What is the accelerated gradient flow in probability space under general information metrics? What is the corresponding discrete-time sampling algorithm?} For optimization problems on a Riemannian manifold,  accelerated gradient methods are studied in \cite{afomf,tragm}. The probability space embedded with information metric can be viewed as a Riemannian manifold, known as density manifold \cite{tdmac}. 
Several previous works explore accelerated methods in this manifold under Wasserstein metric. 
An acceleration framework of particle-based variational inference (ParVI) methods is proposed in \cite{afomo, uaapb} based on manifold optimization.  Taghvaei and Mehta \cite{affpd} introduce accelerated flows from an optimal control perspective. Similar dynamics has been studied from a fluid dynamics viewpoint \cite{coeiw}. Underdamped Langevin dynamics is another way to accelerate on MCMC \cite{ulman,itaao}. 

In this paper, we present a unified framework of accelerated gradient flows in probability space embedded with information metrics, named Accelerated Information Gradient (AIG) flows. From a transport-information-geometry perspective, we derive AIG flows by damping Hamiltonian flows. Examples include Fisher-Rao metric, Wasserstein-2 metric, Kalman-Wasserstein metric and Stein metric. 
%concerning both Fisher-Rao metric and Wasserstein metric. %Then we focus on Wasserstein metric with KL divergence loss function. 
In Gaussian families, we verify the existence of AIG flows. Here we show that the AIG flow corresponds to a well-posed ODE system in the space of symmetric positive definite matrices. 
We rigorously prove the convergence rate of AIG flows based on the geodesic convexity of the loss function under both Fisher-Rao metric and Wasserstein metric. %Here we note that our proof removes the unnecessary technical assumption in \cite[Theorem 1]{affpd}.
Besides, we handle two difficulties in numerical implementations of AIG flows under Wasserstein metric for sampling. On the one hand, as pointed out in \cite{uaapb,affpd}, the logarithm of density term (gradient of KL divergence) is difficult to approximate in particle formulations. We propose a novel kernel selection method, whose bandwidth is learned by sampling from Brownian motions. We call it the BM method. On the other hand, we notice that the AIG flow can be a numerically stiff system, especially in high-dimensional sample spaces. This is because the solution of AIG flows can be close to the boundary of the probability space. To handle this issue, we propose an adaptive restart technique, which accelerates and stabilizes the discrete-time algorithm. Numerical results in Bayesian Logistic regression and Bayesian neural networks indicate the validity of the BM method and the acceleration effects of proposed AIG flows. 

This paper is organized as follows. Section \ref{sec:review} briefly reviews gradient flows and accelerated gradient flows in Euclidean space. Then, the information metrics  in probability space and their corresponding gradient and Hamiltonian flows are introduced. In Section \ref{sec:aigf}, we formulate AIG flows, under Fisher-Rao metric, Wasserstein metric, Kalman-Wasserstein metric and Stein metric. We theoretically prove the convergence rate of AIG flows in Section \ref{sec:conv_aig}. Section \ref{sec:dis_alg} presents the discrete-time algorithm for W-AIG flows, including the BM method and the adaptive restart technique. Section \ref{sec:num} provides numerical experiments. In supplementary materials, we also provide discrete-time algorithms for both Kalman-Wasserstein AIG and Stein AIG flows.

\section{Reviews}
\label{sec:review}
In this section, we review gradient flows and accelerated gradient flows in Euclidean space. Then, we introduce the optimization problems in probability spaces, and review several definitions of information metrics therein. Based on these metrics, we demonstrate gradient and Hamiltonian flows in probability space. These formulations serve necessary preparations for us to derive accelerated gradient flows in probability space. 
%Detailed reviews on analyzing density functions as points in probability space can be found in 
See detailed analysis on metrics in probability space in \cite{dgisi,agffm,fasda}. 

\subsection{Accelerated gradient flows in Euclidean space}
\label{ssec:eucl}
Consider an optimization problem in Euclidean space:
$$
\min_{x\in\mbR^n} f(x),
$$
where $f(x)$ is a given convex function with $L$-Lipschitz continuous gradient. %. We assume that $f(x)$ is $L$-smooth. %Namely, $\nabla f(x)$ is $L$-Lipschitz continuous. 
Here $\la\cdot,\cdot \ra $ and $\|\cdot\|$ are the Euclidean inner product and norm in $\mbR^n$. %Suppose that $x^*$ is the unique minimizer of $f(x)$. 
The gradient descent method has the update rule
$$
x_{k+1} = x_k-\tau_k\nabla f(x_k),
$$
where $\tau_k>0$ is a step size. With the limit $\tau_k\to 0$, the continuous-time limit of gradient descent method is the gradient flow (GF)
$$
\dot x_t = -\nabla f(x_t).
$$
To accelerate the gradient descent method, Nesterov introduced an accelerated method \cite{nesterov}:%, which has the following update rule:
$$
\left\{
\begin{aligned}
&\mfx_k=\mfy_{k-1}-\tau_k\nabla f(\mfy_{k-1}),\\
&\mfy_k=\mfx_k+\alpha_k(\mfx_k-\mfx_{k-1}).
\end{aligned}
\right.
$$
Here $\alpha_k$ depends on the convexity of $f(x)$. If $f(x)$ is $\beta$-strongly convex, then $\alpha_k = \frac{\sqrt{L}-\sqrt{\beta}}{\sqrt{L}+\sqrt{\beta}}$; otherwise, $\alpha_k=\frac{k-1}{k+2}$. \cite{adefm} show that the continuous-time limit of Nesterov's accelerated method satisfies an ODE, which is known as the accelerated gradient flow (AGF):  
\begin{equation}\label{equ:nesterov}
    \ddot x_t+\alpha_t\dot x_t+\nabla f(x_t) = 0. 
\end{equation}
Here $\alpha_t=2\sqrt{\beta}$ if $f(x)$ is $\beta$-strongly convex; $\alpha_t=3/t$ for general convex $f(x)$.  %The acceleration effect can be observed based on the comparison of convergence rate in Table \ref{tab:cvx}. 
An important observation in \cite{hdm} is that the accelerated gradient flow \eqref{equ:nesterov} can be formulated as a damped Hamiltonian flow:
$$
\begin{bmatrix}
\dot x_t\\\dot p_t
\end{bmatrix}
+\begin{bmatrix}
0\\\alpha_t p_t
\end{bmatrix}-\begin{bmatrix} 0&I\\-I&0\end{bmatrix}\begin{bmatrix}\nabla_x H^E(x_t,p_t)\\\nabla_p H^E(x_t,p_t)\end{bmatrix}=0.
$$
where $x$ is the state variable and $p$ is the momentum variable. The Hamiltonian function satisfies $H^E(x,p) = \frac{\|p\|^2}{2}+f(x)$, which consists of Euclidean kinetic function $\frac{\|p\|^2}{2}$ and potential function $f(x)$.
In other words, one can formulate an accelerated gradient flow by adding a linear momentum term into the Hamiltonian flow. 
%This perspective in Euclidean space is also introduced in \cite{hdm}. 
Later on, we follow this damped Hamiltonian perspective and derive related accelerated gradient flows in probability space.
%show that this Hamiltonian perspective of accelerated gradient flow is useful in deriving corresponding equations in probability space.
 %%We next review Hamiltonian in density space with information metrics.
\subsection{Metrics in probability space}
\label{ssec:metric}
In practice, machine learning problems, especially Bayesian sampling problems, can be formulated as optimization problems in probability space. In other words, consider 
\begin{equation*}
    \min_{\rho\in\mcP(\Omega)} E(\rho),
\end{equation*}
where $\Omega\subset \mbR^n$ is a region and the set of probability density is denoted by $\mcP(\Omega) =\{\rho\in \mathcal{F}(\Omega)\colon \int_\Omega \rho dx=1,\quad \rho\geq0\}$.
Here $\mcF(\Omega)$ represents the set of smooth functions on $\Omega$. In practice, $E(\rho)$ is often chosen as a divergence or metric functional between $\rho$ and a target density $\rho^*\in\mcP(\Omega)$. 

In literature, it has been shown that various sampling algorithms correspond to gradient flows of $E(\rho)$, depending on the metrics in probability space. We brief review the definition of metrics in probability space as follows.
\begin{definition}[Metric in probability space]
Denote the tangent space at $\rho\in\mcP(\Omega)$ by $T_\rho\mathcal{P}(\Omega)=\left\{\sigma\in\mcF(\Omega):\int \sigma dx=0.\right\}$. The cotangent space at $\rho$, $T^*_\rho\mcP(\Omega)$,  can be treated as the quotient space $\mcF(\Omega)/\mbR$.%, which are functions in $\mcF(\Omega)$ defined up to addition of constants. 
A metric tensor $G(\rho): T_\rho\mcP(\Omega)\to T^*_\rho\mcP(\Omega)$ is an invertible mapping from $T_\rho \mcP(\Omega)$ to $T_\rho^* \mcP(\Omega)$. This metric tensor defines the metric (inner product) on tangent space $T_\rho\mcP(\Omega)$:
$$
g_\rho(\sigma_1,\sigma_2) =\int   \sigma_1G(\rho)\sigma_2dx=\int   \Phi_1G(\rho)^{-1}\Phi_2dx,\quad \sigma_1, \sigma_2\in T_\rho\mcP(\Omega)
$$
where $\Phi_i$ is the solution to $\sigma_i= G(\rho)^{-1}\Phi_i$, $i=1,2$. 
\end{definition}
Along with a given metric, the probability space $\mcP(\Omega)$ can be viewed as an infinite-dimensional Riemannian manifold, which is known as the density manifold \cite{tdmac}. We review four examples of metrics in $\mcP(\Omega)$: the Fisher-Rao metric from information geometry, the Wasserstein metric from optimal transport, the Kalman-Wasserstein metric from ensemble Kalman sampling and the Stein metric from Stein variational gradient method. For simplicity, we denote $\mbE_{\rho}[\Phi] = \int \Phi\rho dx$. 

\begin{example}[Fisher-Rao metric]
The inverse of Fisher-Rao metric tensor is defined by
$$
G^F(\rho)^{-1}\Phi = \rho\lp\Phi -\mbE_\rho[\Phi]\rp, \quad \Phi\in T_\rho^*\mcP(\Omega).
$$
% The Fisher-Rao metric on the tangent space satisfies
% $$
% \begin{aligned}
% g_\rho^F( \sigma_1,\sigma_2)=\mbE_\rho[\Phi_1\Phi_2]-\mbE_\rho[\Phi_1]\mbE_\rho[\Phi_2],\quad \sigma_1, \sigma_2\in T_\rho\mcP(\Omega),
% \end{aligned}
% $$
% where $\Phi_i$ is the solution to $\sigma_i=\rho\lp\Phi_i -\mbE_\rho[\Phi_i]\rp, i=1,2$. 
\end{example}
\begin{example}[Wasserstein metric]
The inverse of Wasserstein metric tensor writes
$$
G^W(\rho)^{-1}\Phi = -\nabla\cdot(\rho\nabla\Phi), \quad \Phi\in T_\rho^*\mcP(\Omega).
$$
% The Wasserstein metric on the tangent space writes
% $$
% g_\rho^W( \sigma_1,\sigma_2)=\int   \rho\la \nabla \Phi_1,\nabla \Phi_2\ra dx, \quad \sigma_1, \sigma_2\in T_\rho\mcP(\Omega),
% $$
% where $\Phi_i$ follows $\sigma_i=-\nabla \cdot(\rho\nabla \Phi_i),\,i=1,2$. 
\end{example}

\begin{example}[Kalman-Wasserstein metric, \cite{ild}]
The inverse of metric tensor is defined by
$$
G^{KW}(\rho)^{-1}\Phi = -\nabla\cdot(\rho C^\lambda(\rho)\nabla\Phi), \quad \Phi\in T_\rho^*\mcP(\Omega).
$$
Here $\lambda\geq 0$ is a given regularization constant and $C^\lambda(\rho)\in \mbR^{n\times n}$ follows
$$
C^\lambda(\rho) = \int (x-m(\rho))(x-m(\rho))^T\rho dx+\lambda I,\quad m(\rho) = \int x\rho dx.
$$
% The Kalman-Wasserstein metric on the tangent space satisfies
% $$
% g_\rho^W( \sigma_1,\sigma_2)=\int \rho\la \nabla \Phi_1, C^\lambda(\rho) \nabla \Phi_2\ra dx, \quad \sigma_1, \sigma_2\in T_\rho\mcP(\Omega),
% $$
% where $\Phi_i$ satisfies $\sigma_i=-\nabla \cdot(\rho C^\lambda(\rho) \nabla \Phi_i),\,i=1,2$. 
\end{example}

\begin{example}[Stein metric, \cite{Liu2017_steina,otgos}]
The inverse of Stein metric tensor is defined by
$$
G^S(\rho)^{-1}\Phi(x) = -\nabla_x\cdot\pp{\rho(x) \int k(x,y) \rho(y) \nabla_y\Phi(y) dy}.
$$
Here $k(x,y)$ is a given positive kernel function.
\end{example}

\subsection{Gradient flows and Hamiltonian flows in probability space}
% One typical example of $E(\rho)$ is the KL divergence from $\rho$ to $\rho^*$,
% \begin{equation*}
% E(\rho) = \mathrm{D}_{\textrm{KL}}(\rho\|\rho^*)=\int \log\lp\frac{\rho}{\rho^*}\rp \rho dx.
% \end{equation*}
% Another example is the maximum mean discrepancy (MMD, \citet{aktst}),
% \begin{equation*}
%     E(\rho) = \MMD(\rho,\rho^*) = \int\int(\rho(x)-\rho^*(x))K(x,y)(\rho(y)-\rho^*(y))dxdy,
% \end{equation*}
% where $K(x,y)$ is a given kernel function. 
The gradient flow for $E(\rho)$ in $(\mcP(\Omega), g_\rho)$ takes the form
\begin{equation*}
\p_t\rho_t = -G(\rho_t)^{-1}\frac{\delta E}{\delta \rho_t}.
\end{equation*}
Here $\frac{\delta E}{\delta \rho_t}$ is the $L^2$ first variation w.r.t. $\rho_t$. %We formulate gradient flows under either Fisher-Rao metric and Wasserstein metric.
% \begin{example}[Fisher-Rao gradient flow]
% %The Fisher-Rao gradient flow satisfies
% \begin{equation*}
% \begin{split}
% \p_t\rho_t=&-G^F(\rho_t)^{-1}\frac{\delta E}{\delta\rho_t}=-\rho_t\lp \frac{\delta E}{\delta \rho_t}-\int    \frac{\delta E}{\delta \rho_t}\rho_tdy\rp.
% \end{split}
% \end{equation*}
% \end{example}
%\begin{example}[Wasserstein gradient flow]
For example, the Wasserstein gradient flow writes
\begin{equation*}
\begin{split}
\p_t\rho_t =&-G^W(\rho_t)^{-1}\frac{\delta E}{\delta\rho_t}=\nabla \cdot \lp\rho_t\nabla \frac{\delta E}{\delta \rho_t}\rp.
\end{split}
\end{equation*}
%\end{example}

%\subsection{Hamiltonian flow in probability space}
%\label{ssec:hami}
%In this subsection, 
We then briefly review Hamiltonian flows in probability space. 
% For a given metric $g_\rho$ in probability space, we can define a Lagrangian by
% $$\mcL(\rho_t,\p_t\rho_t)=\frac{1}{2}g_{\rho_t}(\p_t\rho_t,\p_t\rho_t)-E(\rho_t).$$
% The Euler-Lagrange equation for this Lagrangian follows
% \begin{equation*}
% \p_t\lp\frac{\delta \mcL}{\delta (\p_t \rho_t)}\rp=\frac{\delta \mcL}{\delta \rho_t}+C(t),
% \end{equation*}
% where $C(t)$ is a spatially-constant function. 
%\begin{definition}[Hamiltonian flows in probability space]
%Denote $\Phi_t = \delta L/\delta (\p_t \rho_t)=G(\rho_t)\p_t\rho_t$. Then, the Euler-Lagrange equation can be formulated as a system of $(\rho_t,\Phi_t)$, i.e.,
Given a metric $\mcG(\rho)$, denote the density function $\rho_t$ as a state variable while function $\Phi_t$ as a momentum variable. The Hamiltonian flow in probability space follows
\begin{equation}\label{equ:hf2}
\p_t\begin{bmatrix}\rho_t\\\Phi_t\end{bmatrix}
-\begin{bmatrix} 0&1\\-1&0\end{bmatrix}\begin{bmatrix}\frac{\delta}{\delta \rho_t} \mcH(\rho_t,\Phi_t)\\\frac{\delta}{\delta \Phi_t} \mcH(\rho_t,\Phi_t)
\end{bmatrix} = 0,
\end{equation}
with respect to the Hamiltonian in density space by  $$\mcH(\rho_t,\Phi_t)=\frac{1}{2}\int   \Phi_t G(\rho_t)^{-1}\Phi_tdx+E(\rho_t).
$$
Similar to the Euclidean Hamiltonian function, the Hamiltonian functional in density space consists of a kinetic energy $\frac{1}{2}\int \Phi G(\rho)^{-1}\Phi dx$ and a potential energy $E(\rho)$. 
%Here $\rho$ is the state variable, $\Phi$ is the momentum variable. 
% This PDE system is equivalent to
% \begin{equation*}
% \left\{
% \begin{aligned}
% &\p_t\rho_t-G(\rho_t)^{-1}\Phi_t=0,\\
% &\p_t\Phi_t+\frac{1}{2}\frac{\delta}{\delta \rho_t}\lp \int  \Phi_tG(\rho_t)^{-1}\Phi_tdx\rp+\frac{\delta E}{\delta \rho_t}=0.
% \end{aligned}
% \right.
% \end{equation*}

%\end{definition}
% We write Hamiltonian flows under Fisher-Rao metric or Wasserstein metric as follows.
% \begin{example}[Fisher-Rao Hamiltonian flow]
% %The Fisher-Rao Hamiltonian flow follows
% \begin{equation*}
% \left\{
% \begin{aligned}
% &\p_t\rho_t-\rho_t\lp \Phi_t-\mbE_{\rho_t}[\Phi_t]\rp=0,\\
% &\p_t\Phi_t+\frac{1}{2}\Phi_t^2-\mbE_{\rho_t}[\Phi_t]\Phi_t+\frac{\delta E}{\delta \rho_t}=0,
% \end{aligned}
% \right.
% \end{equation*}
% where the corresponding Hamiltonian is 
% $$
% \mcH^F(\rho_t,\Phi_t)=\frac{1}{2}\lp \mbE_{\rho_t}[\Phi_t^2]-\pp{\mbE_{\rho_t}[\Phi_t]}^2\rp+E(\rho_t). 
% $$
% \end{example}

% \begin{example}[Wasserstein Hamiltonian flow]
% %The Wasserstein Hamiltonian flow writes
% \begin{equation*}
% \left\{
% \begin{aligned}
% &\p_t\rho_t+\nabla\cdot(\rho_t\nabla \Phi_t)=0,\\
% &\p_t\Phi_t+\frac{1}{2}\|\nabla \Phi_t\|^2+\frac{\delta E}{\delta \rho_t}=0,
% \end{aligned}
% \right.
% \end{equation*}
% where the corresponding Hamiltonian is 
% $$
% \mcH^W(\rho_t,\Phi_t)=\frac{1}{2}\int   \|\nabla \Phi_t\|^2\rho_tdx+E(\rho_t). $$
% This is identical to the Wasserstein Hamiltonian flow introduced by \citet{whf}.
% \end{example}
%In summary, we review the Hamiltonian formulation in density space. %In probability space, 

\section{Accelerated information gradient flow}
\label{sec:aigf}
We introduce the accelerated gradient flow in probability density space as follows. Let $\alpha_t\geq0$ be a scalar function of $t$. We add a damping term $\alpha_t\Phi_t$ to the Hamiltonian flow \eqref{equ:hf2}:  
\begin{equation}\label{equ:hf3}
\p_t\begin{bmatrix}\rho_t\\\Phi_t\end{bmatrix}
+\begin{bmatrix}0\\\alpha_t\Phi_t\end{bmatrix}
-\begin{bmatrix} 0&1\\-1&0\end{bmatrix}\begin{bmatrix}\frac{\delta}{\delta \rho_t} \mcH(\rho_t,\Phi_t)\\\frac{\delta}{\delta \Phi_t} \mcH(\rho_t,\Phi_t)
\end{bmatrix} = 0.
\end{equation}
We call dynamics \eqref{equ:hf3} {\em Accelerated Information Gradient (AIG) flow}.
\begin{proposition}
The accelerated information gradient flow satisfies
\begin{equation}\label{equ:aig}
\left\{\begin{aligned}
&\p_t\rho_t-G(\rho_t)^{-1}\Phi_t=0,\\
&\p_t \Phi_t+\alpha_t\Phi_t+\frac{1}{2}\frac{\delta}{\delta \rho_t}\lp \int  \Phi_tG(\rho_t)^{-1}\Phi_tdx\rp+\frac{\delta E}{\delta \rho_t}=0,\\
\end{aligned}\right.\tag{AIG}
\end{equation}
with initial values $\rho_t|_{t=0}=\rho_0$ and $\Phi_t|_{t=0}=0$. 
\end{proposition}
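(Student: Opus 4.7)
The plan is to unpack the block form of the damped Hamiltonian flow \eqref{equ:hf3} and then compute the two functional derivatives of the Hamiltonian $\mcH(\rho_t,\Phi_t)=\frac{1}{2}\int \Phi_t G(\rho_t)^{-1}\Phi_t dx + E(\rho_t)$ that appear in it. Multiplying out the skew-symmetric Poisson matrix, equation \eqref{equ:hf3} is equivalent to the pair
\[
\p_t\rho_t = \frac{\delta \mcH}{\delta \Phi_t},\qquad \p_t\Phi_t + \alpha_t\Phi_t = -\frac{\delta \mcH}{\delta \rho_t},
\]
treating $\rho_t$ and $\Phi_t$ as independent coordinates on $T^*\mcP(\Omega)$, so the problem reduces to identifying these two variations.

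First I would compute the $\Phi_t$-variation, in which only the kinetic part of $\mcH$ contributes. Perturbing $\Phi_t \mapsto \Phi_t + \epsilon\psi$ with $\psi\in T_{\rho_t}^*\mcP(\Omega)$, I get
\[
\frac{d}{d\epsilon}\bigg|_{\epsilon=0}\mcH = \int \psi\, G(\rho_t)^{-1}\Phi_t\, dx,
\]
where the cross-term symmetrization uses that $G(\rho_t)^{-1}\colon T_\rho^*\mcP\to T_\rho\mcP$ is self-adjoint with respect to the $L^2$ pairing. This self-adjointness is built into the definition of the metric tensor, since $g_\rho$ is an inner product. Hence $\delta \mcH/\delta \Phi_t = G(\rho_t)^{-1}\Phi_t$, and substituting into the first component reproduces the first equation $\p_t\rho_t - G(\rho_t)^{-1}\Phi_t = 0$.

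Second I would compute the $\rho_t$-variation, with $\Phi_t$ held fixed. By linearity of the first variation in the two summands of $\mcH$,
\[
\frac{\delta \mcH}{\delta \rho_t} = \frac{1}{2}\frac{\delta}{\delta \rho_t}\lp\int \Phi_t G(\rho_t)^{-1}\Phi_t\, dx\rp + \frac{\delta E}{\delta \rho_t}.
\]
Crucially, I would leave the kinetic variation unsimplified, since the dependence of $G(\rho)^{-1}$ on $\rho$ is metric-specific (e.g.\ Kalman--Wasserstein involves the covariance $C^\lambda(\rho)$). Plugging into the second component gives the second \eqref{equ:aig} equation, and the initial condition $\Phi_0=0$ is simply the modeling choice carried over from the Euclidean AGF convention of zero initial momentum.

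The derivation is essentially a direct unpacking of the matrix equation, so I do not anticipate a genuine obstacle. The one delicate point is the self-adjointness of $G(\rho)^{-1}$ used in the $\Phi$-variation; this deserves an explicit mention but is immediate from the definition of the metric in Section \ref{ssec:metric}, so the proof remains short.
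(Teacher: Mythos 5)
Your proposal is correct and matches the paper's (implicit) treatment: the proposition is a direct unpacking of the Poisson-matrix form of \eqref{equ:hf3}, and the two variations you compute, $\delta\mcH/\delta\Phi_t = G(\rho_t)^{-1}\Phi_t$ via the self-adjointness of $G(\rho)^{-1}$ (which is the identity \eqref{equ:id1} in the appendix) and $\delta\mcH/\delta\rho_t$ by linearity, are exactly what is needed. The only cosmetic remark is that the self-adjointness you invoke is precisely what makes the symmetrized cross-term collapse, and the paper records this as $\int \Phi_1 G(\rho)^{-1}\Phi_2\,dx = \int G(\rho)^{-1}\Phi_1\,\Phi_2\,dx$, so your argument lines up with the paper's bookkeeping.
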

% \begin{remark}
% The Nesterov's accelerated method \cite{nesterov} is a first-order method to optimize $f(x)$ in the Euclidean space. Denote the Hamiltonian $H^E(x,p) = \frac{1}{2}\|p\|^2+f(x)$. The corresponding accelerated gradient flow by \citet{adefm} is equivalent to a damped Hamiltonian system
% \begin{equation*}
% \begin{bmatrix}
% \dot x\\\dot p
% \end{bmatrix}
% +\begin{bmatrix}
% 0\\\alpha_t p
% \end{bmatrix}-\begin{bmatrix} 0&I\\-I&0\end{bmatrix}\begin{bmatrix}\nabla_x H^E(x,p)\\\nabla_p H^E(x,p)\end{bmatrix}=0.
% \end{equation*}
% with initial values $x_t|_{t=0}=x_0$ and $p_0|_{t=0}=0$. The choice of $\alpha_t $ depends on the property of $f(x)$. If $f(x)$ is $\beta$-strongly convex, then $\alpha_t=2\sqrt{\beta}$; if $f(x)$ is convex, then $\alpha_t=3/t$. We apply this Hamiltonian flow interpretation to construct \eqref{equ:aig} in the probability space with information metrics. 
% \end{remark}

We give examples of AIG flows under several metrics, such as Fisher-Rao metric, Wasserstein metric, Kalman-Wasserstein metric and Stein metric. See detailed derivations in the supplementary material.
\begin{example}[Fisher-Rao AIG flow]
%The Fisher-Rao AIG flow writes
\begin{equation}\label{equ:f_aig}
\left\{\begin{aligned}
&\p_t\rho_t-\lp\Phi_t -\mbE_{\rho_t}[\Phi_t]\rp\rho_t=0,\\
&\p_t\Phi_t+\alpha_t\Phi_t+\frac{1}{2}\Phi_t^2-\mbE_{\rho_t}[\Phi_t]\Phi_t+\frac{\delta E}{\delta \rho_t}=0.
\end{aligned}\right.\tag{F-AIG}
\end{equation}
\end{example}

\begin{example}[Wasserstein AIG flow, \cite{coeiw,affpd}]
%The Wasserstein AIG flow writes
\begin{equation}\label{equ:w_aig}
\left\{\begin{aligned}
&\p_t\rho_t+\nabla \cdot(\rho_t\nabla \Phi_t)=0,\\
&\p_t \Phi_t+\alpha_t\Phi_t+ \frac{1}{2}\|\nabla \Phi_t\|^2+\frac{\delta E}{\delta \rho_t}=0.\\
\end{aligned}\right.\tag{W-AIG}
\end{equation}
\end{example}
\begin{example}[Kalman-Wasserstein AIG flow]
%The Wasserstein AIG flow writes
\begin{equation}\label{equ:kw_aig}
\left\{\begin{aligned}
&\p_t\rho_t+\nabla \cdot(\rho_tC^\lambda(\rho_t)\nabla \Phi_t)=0,\\
&\p_t \Phi_t+\alpha_t\Phi_t+ \frac{1}{2}\Big((x-m(\rho_t))^TB_{\rho_t}(\Phi_t)(x-m(\rho_t))\\
&+\nabla\Phi_t(x)^TC^\lambda(\rho_t)\nabla \Phi_t(x)\Big)+\frac{\delta E}{\delta \rho_t}=0.\\
\end{aligned}\right.\tag{KW-AIG}
\end{equation}
Here we denote $B_{\rho}(\Phi) = \int \nabla \Phi \nabla \Phi^T \rho dx$. 
\end{example}

\begin{example}[Stein AIG flow]
%The Wasserstein AIG flow writes
\begin{equation}\label{equ:s_aig}
\left\{\begin{aligned}
&\p_t\rho_t(x) +\nabla_x\cdot\pp{\rho_t(x) \int k(x,y) \rho_t(y) \nabla_y\Phi_t(y) dy}=0,\\
&\p_t\Phi_t(x) +\alpha_t \Phi_t(x)+\int   \nabla \Phi_t(x)^T\nabla \Phi_t(y) k(x,y) \rho_t(y)dy +\frac{\delta E}{\delta \rho_t}(x)=0.
\end{aligned}\right.\tag{S-AIG}
\end{equation}
\end{example}
%\begin{remark}
%It is worth mentioning that our method also works for Wasserstein-Fisher-Rao metric \cite{uotga, oetpa, rotskoff2019global}, unnormalized optimal transport metric \cite{uot}, Stein metric \cite{SVGD, Liu2017_steina}, Hessian transport metric \cite{CARRILLO20101273,Dolbeault2009, Li2019} etc. In future works, we shall study the associated accelerated gradient flows.
%\end{remark}

%\begin{remark}
%Gradient flows in probability space under information metrics have wide applications in sampling problems. Our contribution is to extend the framework of gradient flows to linearly damped Hamiltonian flows in probability space. 
%\end{remark}

%For the rest of this paper, we illustrate our results by Wasserstein accelerated gradient flows (W-AIG). 
To design fast sampling algorithms, we need to reformulate  the evolution of probability in term of samples. In other words, PDEs in term of $(\rho, \Phi)$ is the Eulerian formulation in fluid dynamics, while the particle formulation is the flow map equation, known as the Lagrangian formulation. We present examples for W-AIG flow, KW-AIG flow and S-AIG flow, which have particle formulations. We suppose that $X_t\sim \rho_t$ and $V_t=\nabla\Phi_t(X_t)$ are the position and the velocity of a particle at time $t$.
\begin{example}[Particle W-AIG flow]\label{exm:waig_p}
The particle dynamical system for the flow \eqref{equ:w_aig} writes
\begin{equation}\label{equ:w_aig_p}
\left\{
\begin{aligned}
&\frac{d}{dt}X_t = V_t,\\ 
&\frac{d}{dt}V_t =-\alpha_tV_t -\nabla\lp\frac{\delta E}{\delta \rho_t}\rp(X_t) .
\end{aligned}\right.
\end{equation}
\end{example}

\begin{example}[Particle KW-AIG flow]\label{exm:kwaig_p}
The particle dynamical system for the flow \eqref{equ:kw_aig} writes
\begin{equation}\label{equ:kw_aig_p}
\left\{
\begin{aligned}
&\frac{dX_t}{dt} = C^\lambda(\rho_t)V_t,\\ 
&\frac{dV_t}{dt} =-\alpha_tV_t-\mathbb{E}[V_tV_t^T](X_t-\mbE[X_t])-\nabla\lp\frac{\delta E}{\delta \rho_t}\rp(X_t).
\end{aligned}\right.
\end{equation}
Here the expectation is taken over the particle system. 
\end{example}

\begin{example}[Particle S-AIG flow]\label{exm:saig_p}
The particle dynamical system for the flow \eqref{equ:s_aig} writes
\begin{equation}\label{equ:s_aig_p}
\left\{
\begin{aligned}
&\frac{d X_t}{dt} = \int k(X_t,y) \nabla \Phi_t (y)\rho_t(y)dy,\\
&\frac{d V_t}{dt} =-\alpha_t V_t-\int V_t^T\nabla \Phi_t (y) \nabla_x k(X_t,y) \rho_t(y)dy-\nabla\lp\frac{\delta E}{\delta \rho_t}\rp(X_t).
\end{aligned}\right.
\end{equation}
\end{example}
We notice that dynamics in examples \ref{exm:waig_p} to \ref{exm:saig_p} are mean-field dynamics. Here the mean-field represents that the dynamics evolves its own probability density function in its path. In addition, they are also mean field Markov process. Here the Markov property holds in the sense that the update of dynamics only depends on the current time probability density. Shortly, we will design a finite dimensional particle dynamical system to simulate these proposed dynamics. 

In later on algorithm and convergence analysis, the choice of $\alpha_t$ is important. Similar as the ones in Euclidean space, $\alpha_t$ depends on the convexity of $E(\rho)$ w.r.t. given metrics.  
\begin{definition}[Convexity in probability space]\label{def:hess}
For a functional $E(\rho)$ defined on the probability space, we say that $E(\rho)$ is $\beta$-strongly convex w.r.t. metric $g_\rho$ if there exists a constant $\beta\geq 0$ such that for any $\rho\in \mcP(\Omega)$ and any $\sigma\in T_\rho \mcP(\Omega)$, we have
\begin{equation*}
g_\rho(\Hess E(\rho)\sigma,\sigma)\geq \beta g_\rho(\sigma,\sigma).
\end{equation*}
Here $\Hess$ is the Hessian operator w.r.t. $g_\rho$. If $\beta=0$, we say that $E(\rho)$ is convex w.r.t. metric $g_\rho$. 
\end{definition}
Again, if $E(\rho)$ is $\beta$-strongly convex for $\beta>0$, then $\alpha_t=2\sqrt{\beta}$; if $E(\rho)$ is convex, then $\alpha_t=3/t$. 
% A typical choice of $E(\rho)$ is the KL divergence from $\rho$ to the target density $\rho^*\in \mcP(\Omega)$, 
% \begin{equation}
% \mathrm{D}_{\textrm{KL}}(\rho\|\rho^*) = \int\rho \log\frac{\rho}{e^{-f}}dx-\log Z,
% \end{equation}
% where the target density $\rho^*(x)\propto \exp(-f(x))$ and $Z=\int \exp(-f(x))dx$. Then, \eqref{equ:w_aig_p} is equivalent to
% \begin{equation}\label{equ:w_aig_p_kl}
% \left\{
% \begin{aligned}
% &dX_t = V_t dt,\\
% &dV_t =-\alpha_tV_t dt-\nabla f(X_t)dt-\nabla \log \rho_t(X_t)dt.
% \end{aligned}
% \right.
% \end{equation}

% \begin{remark}
% The $\nabla\log\rho_t(X_t)dt$ term cannot be simply replaced by a Brownian motion $dB_t$ because $\rho_t$ is the marginal distribution on $X_t$. %Several previous works have studied the accelerated gradient flow of KL divergence in probability space under the Wasserstein metric. 
% % \citet{affpd} construct the accelerated gradient flow in probability space based on \citet{avpoa}'s variational formulation on the Nesterov's accelerated method. Their flows coincide with \eqref{equ:w_aig_p_kl} with $\alpha_t=3/t$ after rescaling. The underdamped Langevin dynamics in \cite{ulman,itaao} damps the Hamiltonian flow of the particles, which is different from \eqref{equ:w_aig_p} as shown in \cite{affpd}. 
% \end{remark}

%\citet{afomo, uaapb} give the discrete-time accelerated algorithm from the perspective of manifold optimization.
% because $-\nabla \log \rho_t(X_t)dt$ cannot be directly replaced by the Brownian motion
%For the rest of this paper, we will focus on the Wasserstein AIG flow. 

We can also formulate W-AIG flows in probability models. For instance, the W-AIG flow in Gaussian families becomes an ODE system, which corresponds to updates of covariance matrices. 

\begin{proposition}[W-AIG flows in Gaussian families]
\label{thm:connect}
Suppose that $\rho_0,\rho^*$ are Gaussian distributions with zero means and their covariance matrices are $\Sigma_0$ and $\Sigma^*$. $E(\Sigma)$ evaluates the KL divergence from $\rho$ to $\rho^*$:
\begin{equation}\label{esigma}
E(\Sigma)=\frac{1}{2}\lb \tr (\Sigma (\Sigma^*)^{-1})-\log\det(\Sigma (\Sigma^*)^{-1})-n\rb,
\end{equation}
Let $(\Sigma_t,S_t)$ be the solution to 
\begin{equation}\label{equ:w_aig_g}
\left\{
\begin{aligned}
&\dot \Sigma_t-2(S_t\Sigma_t+\Sigma_t S_t)=0,\\
&\dot S_t+\alpha_tS_t+2S_t^2+\nabla_{\Sigma_t} E(\Sigma_t)=0,
\end{aligned}\right.\tag{W-AIG-G}
\end{equation}
with initial values $\Sigma_t|_{t=0}=\Sigma_0$ and $S_t|_{t=0}=0$. Here $\Sigma_t$ and $S_t$ are symmetric matrices. Then, for any $t\geq 0$, $\Sigma_t$ is well-defined and stays positive definite. Furthermore, we denote
$$
\begin{aligned}
\rho_t(x)=\frac{(2\pi)^{-n/2}}{\sqrt{\det(\Sigma_t)}}\exp\lp-\frac{1}{2}x^T\Sigma_t^{-1}x\rp,\quad\Phi_t(x)=x^TS_tx+C(t),
\end{aligned}
$$
where $C(t)=-t+\frac{1}{2}\int_0^t\log\det(\Sigma_s(\Sigma^*)^{-1})ds.$
Then, $(\rho_t,\Phi_t)$ is the solution to \eqref{equ:w_aig} with initial values $\rho_t|_{t=0}=\rho_0$ and $\Phi_t|_{t=0}=0$.
\end{proposition}
\begin{remark}
If the means of $\rho_0,\rho^*$ are $\mu_0$ and $\mu^*$ instead of $0$, the objective function turns to be
\begin{equation*}
\begin{aligned}
E(\Sigma,\mu)=&\frac{1}{2}[ \tr (\Sigma (\Sigma^*)^{-1})-\log\det(\Sigma (\Sigma^*)^{-1})-n]\\
&+\frac{1}{2}(\mu-\mu^*)^T(\Sigma^*)^{-1}(\mu-\mu^*)].\\
\end{aligned}
\end{equation*}
It is separable in terms of $\mu$ and $\sigma$. For simplicity and clarity, we focus on the case with zero means.
\end{remark}
\begin{remark}
AIG flows can be formulated into general probability models, such as Gaussian mixture models and generative models. We leave the systematic study of AIG flows in models in future works.  
\end{remark}

\section{Convergence rate analysis on AIG flows}
\label{sec:conv_aig}
In this section, we prove the convergence rates of AIG flows under either the Wasserstein metric or the Fisher-Rao metric. This validates the acceleration effect. The proof is motivated by Lyapunov functions of Euclidean accelerated gradient flows in subsection \ref{ssec:eucl}.
%If the objective density $\rho^*\propto \exp(-f(x))$ and $f$ is $\beta$-strongly convex, then $E(\rho)$ satisfies Hess($\beta$), see \cite{goaib}. One common assumption on $E(\rho)$ is that $\nu_{\rho^*}$ satisfies a logarithmic Sobolev inequality with constant $\beta>0$ (in short, LSI($\beta$)). The classical analysis on the Wasserstein gradient flow over KL indicates that for $\nu_{\rho^*}$ satisfying LSI($\beta$),
%\begin{equation*}
%E(\rho_t)=\KL(\rho_t\|\rho^*)\leq e^{-2\beta t}\KL(\rho_0\|\rho^*),
%\end{equation*}
%which is an exponential convergence rate. As indicated in \cite{goaib}, for $\beta>0$, Hess($\beta$) on $E(\rho)$ is a sufficient condition for LSI($\beta$) on $\rho^*$. While many theorems on the convergence rate of accelerated gradient flow require $E(\rho)$ is the KL divergence, our main theorem is valid for general $E(\rho)$ satisfying Hess($\beta$) for $\beta\geq0$.
\begin{theorem}\label{thm:conv_e}
Suppose that $E(\rho)$ is $\beta$-strongly convex for $\beta>0$. The solution $\rho_t$ to \eqref{equ:f_aig} or \eqref{equ:w_aig} with $\alpha_t=2\sqrt{\beta}$ satisfies
\begin{equation*}
E(\rho_t)\leq C_0e^{-\sqrt{\beta}t}=\mcO\pp{e^{-\sqrt{\beta}t}}.
\end{equation*}
If $E(\rho)$ is convex, then the solution $\rho_t$ to \eqref{equ:f_aig} or \eqref{equ:w_aig} with $\alpha_t=3/t$ satisfies
\begin{equation*}
E(\rho_t)\leq C_0't^{-2}=\mcO(t^{-2}).
\end{equation*}
Here the constants $C_0,C_0'$ only depend on $\rho_0$.
\end{theorem}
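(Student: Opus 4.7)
The plan is to adapt the Lyapunov-function technique of Su--Boyd--Cand\`es (as sketched in Section~\ref{ssec:eucl}) to the density manifold $(\mathcal{P}(\Omega),g_\rho)$. Let $\rho^*$ be a minimizer of $E$ and let $d_g(\rho,\rho^*)$ denote the Riemannian distance induced by $g_\rho$. Write $w_t := G(\rho_t)^{-1}\Phi_t$ for the tangent velocity of the AIG flow (so $\partial_t\rho_t = w_t$), and $v_t := -\log_{\rho_t}(\rho^*)$ for the Riemannian logarithm at $\rho_t$ (so $\|v_t\|_{g_{\rho_t}}^2 = d_g^2(\rho_t,\rho^*)$). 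The natural candidates are
\[
\mathcal{L}_\beta(t) = e^{\sqrt\beta\, t}\!\left[\,E(\rho_t)-E(\rho^*) + \tfrac12\|\sqrt\beta\, v_t + w_t\|_{g_{\rho_t}}^2\right]
\quad\text{for }\alpha_t=2\sqrt\beta,
\]
\[
\mathcal{L}_0(t) = t^2\bigl(E(\rho_t)-E(\rho^*)\bigr) + \tfrac12\|2v_t + t\, w_t\|_{g_{\rho_t}}^2
\quad\text{for }\alpha_t=3/t.
\]
Both are manifestly nonnegative, and since $\Phi_0 = 0$ gives $w_0 = 0$, their initial values depend only on $E(\rho_0)-E(\rho^*)$ and $d_g(\rho_0,\rho^*)$; the theorem then reduces to showing $\dot{\mathcal{L}} \le 0$.

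Three structural identities drive the derivative computation, each a direct lift of the Euclidean one. (i) Chain rule for the objective: $\tfrac{d}{dt}E(\rho_t) = \int \tfrac{\delta E}{\delta\rho_t}\, w_t\,dx$. (ii) First variation of squared distance: $\tfrac{d}{dt}\tfrac12 d_g^2(\rho_t,\rho^*) = g_{\rho_t}(v_t,w_t)$, the Riemannian analogue of $\tfrac{d}{dt}\tfrac12\|x-x^*\|^2 = \langle x-x^*,\dot x\rangle$. (iii) Damped kinetic identity: contracting the $\Phi_t$-equation of \eqref{equ:aig} against $\Phi_t$, together with the relation $\partial_t\rho_t = w_t$, produces
\[
\frac{d}{dt}\,\frac12\!\int \Phi_t\, G(\rho_t)^{-1}\Phi_t\, dx \;=\; -\alpha_t\|w_t\|_{g_{\rho_t}}^2 \;-\; \int \tfrac{\delta E}{\delta\rho_t}\, w_t\, dx,
\]
since the two terms coming from $\tfrac{\delta K}{\delta\rho_t}$ (with $K=\tfrac12\int\Phi G(\rho)^{-1}\Phi\,dx$) cancel. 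Expanding the squared norm as $\tfrac\beta2 d_g^2 + \sqrt\beta\, g_{\rho_t}(v_t,w_t) + \tfrac12\|w_t\|_{g_{\rho_t}}^2$, combining (i)--(iii), and applying the $\beta$-geodesic-convexity inequality
\[
E(\rho^*)\ \ge\ E(\rho_t) - \int\tfrac{\delta E}{\delta\rho_t}\, v_t\,dx + \tfrac\beta2\, d_g^2(\rho_t,\rho^*),
\]
the $\delta E/\delta\rho_t$ cross terms cancel and what remains is of the form $-\tfrac{\sqrt\beta}{2}\|w_t\|_{g_{\rho_t}}^2 \le 0$. The $t^{-2}$ case is analogous: only ordinary convexity ($\beta=0$) is used, but the coefficients $t^2$ and $2$ in $\mathcal{L}_0$ are tuned precisely so the same cancellation succeeds, giving $\dot{\mathcal{L}}_0 \le 0$. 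Integrating then yields the two bounds, with $C_0 = \mathcal{L}_\beta(0)$ and $C_0' = \mathcal{L}_0(0)$.

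The main obstacle is the cross-term $\tfrac{d}{dt} g_{\rho_t}(v_t,w_t)$, since the tangent vectors $v_t,w_t$ and the metric tensor $g_{\rho_t}$ all evolve with $t$. The cleanest route is Otto calculus. For Wasserstein, $v_t$ corresponds to the potential $\psi_t$ with $\nabla\psi_t = \mathrm{id} - T_t$ where $T_t$ is the Brenier map from $\rho_t$ to $\rho^*$, so that $g_{\rho_t}(v_t,w_t) = \int\rho_t\,\nabla\psi_t\cdot\nabla\Phi_t\,dx$; the particle formulation of Example~\ref{exm:waig_p}, in which $V_t = \nabla\Phi_t(X_t)$ satisfies a Euclidean-looking damped ODE along characteristics, reduces the whole computation to a Lagrangian variant of the Su--Boyd--Cand\`es argument. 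For Fisher--Rao, $v_t$ admits an explicit closed form in terms of $\sqrt{\rho_t/\rho^*}$ and the Hellinger structure, making the calculation more direct. Both cases require mild regularity assumptions (positive densities bounded away from zero, smooth potentials) to justify the first-variation-of-distance formula in (ii); these are standard in the Otto-calculus and information-geometry literature.
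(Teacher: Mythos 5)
Your overall strategy is the same as the paper's: the same Lyapunov functions (written abstractly; under Wasserstein your $-v_t = T_t - \mathrm{id}$, so $\sqrt\beta\, v_t + w_t$ is exactly the paper's $-\sqrt\beta(T_t(x)-x)+\nabla\Phi_t(x)$), the same use of $\beta$-geodesic convexity, and the same target estimate $\dot{\mathcal{E}}\le -\tfrac{\sqrt\beta}{2}\|w_t\|^2$. However, as written your argument has a genuine hole, and your proposed patch does not close it.

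Your structural identities (i)--(iii) differentiate $E(\rho_t)$, $\tfrac12\|v_t\|^2$, and $\tfrac12\|w_t\|^2$, but the Lyapunov function also contains the cross term $\sqrt\beta\, g_{\rho_t}(v_t,w_t)$, and none of (i)--(iii) computes $\tfrac{d}{dt}g_{\rho_t}(v_t,w_t)$. You acknowledge this as the ``main obstacle,'' but the proposed resolution --- passing to particles and invoking ``a Lagrangian variant of Su--Boyd--Cand\`es'' --- does not work as stated. In Euclidean space the reduction is trivial because $\tfrac{d}{dt}(x^*-x_t)=-\dot x_t$, i.e.\ the target $x^*$ is fixed. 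In Wasserstein space the optimal map $T_t$ itself moves in $t$, and one gets
\[
\frac{d}{dt}\bigl(T_t(X_t)-X_t\bigr) \;=\; (\partial_t T_t)(X_t) + \bigl(\nabla T_t(X_t)-I\bigr)V_t \;=\; -V_t + \nabla T_t(X_t)\bigl(\nabla\Phi_t - u_t\bigr)(X_t),
\]
where $u_t = \partial_t (T_t)^{-1}\!\circ T_t$ is the velocity field that advects $T_t^{-1}\#\rho^*$; the extra term $\nabla T_t(\nabla\Phi_t - u_t)$ does \emph{not} vanish in general, and it is precisely the nonflatness of the density manifold. Any ``Lagrangian SBC'' reduction must still control this term; it does not disappear by changing coordinates.

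The paper's actual contribution, and exactly what your sketch is missing, is the pair of sign/annihilation identities (Lemma~\ref{lem:vt} in the appendix): $\nabla\cdot(\rho_t(u_t-\nabla\Phi_t))=0$, hence
\[
\int \langle\nabla\Phi_t-u_t,\;\nabla T_t\,\nabla\Phi_t\rangle\,\rho_t\,dx = \int\langle\nabla\Phi_t-u_t,\;\nabla T_t(\nabla\Phi_t-u_t)\rangle\,\rho_t\,dx \;\ge\; 0
\]
because $\nabla T_t=\nabla^2\Psi_t$ is symmetric positive-definite, together with $\int\langle\nabla\Phi_t-u_t,\;\nabla T_t(T_t-\mathrm{id})\rangle\rho_t\,dx=0$. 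These are what let the curvature-correction contribution be discarded with the correct sign. Notably, the quantity you want to vanish is exactly the technical assumption made (without proof) by Taghvaei--Mehta \cite{affpd}; the paper shows it is merely $\ge 0$, not $=0$, except in one dimension, and structures the differentiation so that this sign suffices. Your proposal does not contain this observation, and without it the claimed cancellation ``what remains is of the form $-\tfrac{\sqrt\beta}{2}\|w_t\|^2\le 0$'' is unjustified. The Fisher--Rao case has the same issue: the paper's Lemma~\ref{lem:obsv} (involving $\partial_t H_t$, $\partial_t T_t$, and a Cauchy--Schwarz argument in the sphere parametrization $R_t=\sqrt{\rho_t}$) is the analogous sign lemma, and ``a closed form in terms of $\sqrt{\rho_t/\rho^*}$'' does not by itself deliver it.
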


%The  is equivalent to the $\beta$-geodesic convexity in the probability space w.r.t. $g_\rho$. 
% \begin{remark}
% For Wasserstein metric, the $\beta$-strongly convexity is also known as $\beta$-displacement convexity, see \cite[Chap 16]{otoan}. Consider the case where $E(\rho)$ evaluates the KL divergence.% and the target density takes the form $\rho^*\propto \exp(-f(x))$. 
% A sufficient condition for $E(\rho)$ is $\beta$-strongly convex is that $f(x)$ is $\beta$-strongly convex, see \cite{goaib,diff_hyper}. If $E(\rho)$ is $\beta$-strongly convex with $\beta>0$, then the classical analysis indicates that the solution to the Wasserstein gradient flow has an $\mcO(e^{-2\beta t})$ convergence rate. %The W-AIG flow improves such convergence rate to $\mcO(e^{-\sqrt{\beta}t})$. The acceleration effect is more obvious especially when $\beta$ is close to $0$.
% \end{remark}

%The $\mcO(\exp(-\sqrt{\beta}t))$ convergence rate of W-AIG flow is new and this is better than the $\mcO(\exp(-\beta t))$ convergence rate of the Wasserstein gradient flow, especially when $\beta$ is close to $0$.  
%The W-AIG flow improves the convergence rate to $\mcO(e^{-\sqrt{\beta}t})$.  
%The Hess($\beta$) condition on $E(\rho)$ can be relaxed to a Log-Sobolev inequality with constant $\beta>0$ on $\rho^*$. 

\begin{remark}
For $\beta$-strongly convex $E(\rho)$ under the Wasserstein metric, \cite{coeiw} study a compressed Euler equation. They prove similar results with a constant damping coefficient $\alpha_t$. For convex $E(\rho)$ under the Wasserstein metric, \cite{affpd} prove similar results with a technical assumption.
\end{remark}

\begin{remark}
Compared to underdamped Langevin dynamics, W-AIG has the accelerated convergence rate guarantee compared to W-GF and it has a closer relation with the Euclidean accelerated gradient flow. %Nevertheless, the particle implementation of W-AIG requires the estimation of $\nabla \log \rho_k$, while the underdamped Langevin dynamics does not require this.
\end{remark}

\begin{remark}
The Fisher metric and the Wasserstein metric are two popular metrics to consider in the probability space. Therefore, we focus on deriving the convergence analysis for these two metrics. The convergence results for other general information metrics are interesting problems for future studies.
\end{remark}

%We present the sketch of the proof for Theorem \ref{thm:conv_e} as follows. 
In Euclidean case, the convergence rate of accelerated gradient flow is based on the construction of Lyapunov functions. Namely, for $\beta$-strongly convex $f(x)$, consider a Lyapunov function:
$$
\mcE(t) =\frac{e^{\sqrt{\beta}t}}{2} \|\sqrt{\beta}(x_t-x^*)+\dot x_t\|^2+e^{\sqrt{\beta}t}(f(x_t)-f(x^*)).
$$
For general convex $f(x)$, consider a Lyapunov function
$$
\mcE(t) = \frac{1}{2}\left\|(x_t-x^*)+\frac{t}{2}\dot x_t\right\|^2+\frac{t^2}{4}(f(x_t)-f(x^*)). 
$$
Based on different assumptions on the convexity of $f(x)$, we can prove that these Lyapunov function are not increasing w.r.t. $t$. Hence, the convergence rates are obtained.

\begin{remark}
Our choices of the damping parameter $\alpha_t$ are analogous to these in the Euclidean case. In the Euclidean case, the damped Hamiltonian system and the related Lyapunov functions are derived from the Bregman Lagrangian introduced in \cite{avpoa, wilson2016lyapunov}. For simplicity, we only focus on two specific choices of the damping parameters $\alpha_t$, based on the convexity of the energy functional.
\end{remark}

%Namely, $T_t:\mbR^n \to \mbR^n$ is a mapping that satisfies $T_t\# \rho_t=\rho^*$ and 
%$$
%W(\rho_t,\rho^*)^2=\int \|T_t(x)-x\|^2\rho_t dx.
%$$
% Let $T\#\rho$ denote the push-forward density from $\rho$ by the mapping $T$.
% The following proposition characterizes the inverse of the exponential map in probability space with the Wasserstein metric.
% \begin{proposition}\label{prop:deri}
% Denote the geodesic curve $\gamma(s)$ that connects $\rho_t$ and $\rho^*$ by $\gamma(s)=(sT_t+(1-s)\Id)\#\rho_t,\,s\in[0,1]$. Here $\Id$ is the identity mapping from $\mbR^n$ to itself. Then, $\dot\gamma(0)$ corresponds to a tangent vector $-\nabla\cdot(\rho_t(x) (T_t(x)-x))\in T_{\rho_t}\mcP(\Omega)$.
% \end{proposition}

Following Lyapunov functions in Euclidean space, we provide a sketch of the proof for Theorem 1. We first consider the case where $E(\rho)$ is $\beta$-strongly convex for $\beta>0$. Let $T_t$ denote the optimal transport plan from $\rho_t$ to $\rho^*$. Consider a Lyapunov function
\begin{equation}\label{equ:lya}
\begin{aligned}
\mcE(t)=&\frac{e^{\sqrt{\beta}t}}{2}\int  \left\| -\sqrt{\beta} (T_t(x)-x)+ \nabla \Phi_t(x)\right\|^2\rho_t(x) dx\\
&+e^{\sqrt{\beta}t}(E(\rho_t)-E(\rho^*)).
\end{aligned}
\end{equation}
Here the $-(T_t(x)-x)$ term can be viewed as $x_t-x^*$ and $\nabla \Phi_t$ can be viewed as $\dot x
_t$. Different from the Euclidean case, we introduce an important lemma in proving that $\mcE(t)$ is non-increasing.

\begin{lemma}
Denote $u_t=\p_t (T_t)^{-1}\circ T_t$. Then,$u_t$ satisfies
\begin{equation*}
\nabla\cdot \lp\rho_t(u_t-\nabla\Phi_t)\rp=0.
\end{equation*}
We also have
\begin{equation*}
\p_tT_t(x)=-\nabla T_t(x)u_t(x).
\end{equation*}
More importantly, we have
$$
\begin{aligned}
&\int  \la \nabla \Phi_t-u_t, \nabla T_t\nabla \Phi_t\ra\rho_tdx\geq 0,\\
& \int \la\nabla \Phi_t-u_t, \nabla T_t(x)(T_t(x)-x) \ra\rho_t=0.
\end{aligned}
$$
\end{lemma}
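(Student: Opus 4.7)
The plan is to exploit Brenier's theorem, which represents the optimal transport map as $T_t=\nabla\psi_t$ for a convex potential $\psi_t$, with inverse $T_t^{-1}=\nabla\psi_t^*$ via Legendre duality; in particular, $\nabla T_t=\nabla^2\psi_t$ is symmetric and positive semi-definite. All four claims will be derived from this structure together with the transport relation $T_t\#\rho_t=\rho^*$ and the continuity equation $\partial_t\rho_t+\nabla\cdot(\rho_t\nabla\Phi_t)=0$ built into (W-AIG).

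For the first claim, I would regard $S_t:=T_t^{-1}$ as a time-dependent flow pushing the fixed density $\rho^*$ onto $\rho_t$. Its Eulerian velocity field is precisely $u_t=(\partial_t S_t)\circ T_t$, so the push-forward continuity equation yields $\partial_t\rho_t+\nabla\cdot(\rho_t u_t)=0$, and subtracting from the W-AIG continuity equation gives the divergence identity. The second claim then follows by differentiating $T_t(S_t(y))=y$ in $t$ and evaluating at $y=T_t(x)$, invoking the chain rule. For the fourth claim, I would do a short coordinate computation using the symmetry of $\nabla T_t$ to obtain
\begin{equation*}
\nabla T_t(T_t-x)=\nabla\!\left[\tfrac{1}{2}\|T_t-x\|^2+\psi_t-\tfrac{1}{2}\|x\|^2\right],
\end{equation*}
so this field is a gradient; combined with the first claim and integration by parts, the corresponding integral vanishes.

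The hard part is the inequality. The key structural observation I would establish is that $\nabla T_t\, u_t$ is also a gradient. From $S_t=\nabla\psi_t^*$ one has $u_t=(\nabla\partial_t\psi_t^*)\circ T_t$, and then the chain rule together with the symmetry of $\nabla T_t$ yields
\begin{equation*}
\nabla_x\!\left[\partial_t\psi_t^*(T_t(x))\right]=\nabla T_t(x)\,u_t(x).
\end{equation*}
With this in hand, I would split $\nabla T_t\nabla\Phi_t=\nabla T_t(\nabla\Phi_t-u_t)+\nabla T_t u_t$ and rewrite
\begin{equation*}
\int\langle\nabla\Phi_t-u_t,\nabla T_t\nabla\Phi_t\rangle\rho_t\,dx = \int\langle\nabla\Phi_t-u_t,\nabla T_t(\nabla\Phi_t-u_t)\rangle\rho_t\,dx + \int\langle\nabla\Phi_t-u_t,\nabla T_t u_t\rangle\rho_t\,dx.
\end{equation*}
The first integral is nonnegative because $\nabla T_t$ is positive semi-definite. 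The second integral vanishes: by the first claim the field $\rho_t(\nabla\Phi_t-u_t)$ is divergence-free, so pairing it against any gradient integrates to zero. Hence the inequality holds, with the genuine difficulty concentrated in recognizing the gradient structure of $\nabla T_t u_t$, for which Brenier's polar factorization and Legendre duality are essential.
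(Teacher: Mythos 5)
Your proof is correct and follows essentially the same route as the paper: the divergence identity from the push-forward continuity equation for the inverse flow, $\partial_t T_t=-\nabla T_t\,u_t$ from differentiating $T_t\circ T_t^{-1}=\mathrm{id}$ (the paper uses a Taylor expansion, yours is the cleaner chain-rule version of the same step), and then the decomposition $\nabla T_t\nabla\Phi_t=\nabla T_t(\nabla\Phi_t-u_t)+\nabla T_t u_t$, the positive semi-definiteness of $\nabla T_t=\nabla^2\Psi_t$, and integration by parts against the divergence-free field $\rho_t(\nabla\Phi_t-u_t)$. The one small place you deviate is in exhibiting $\nabla T_t u_t$ as a gradient: you pass through the Legendre dual and write $\nabla T_t u_t=\nabla\!\left[\partial_t\psi_t^*\circ T_t\right]$, whereas the paper just reuses the identity $-\nabla T_t u_t=\partial_t T_t=\nabla\partial_t\Psi_t$ with $T_t=\nabla\Psi_t$; both are valid, the paper's being marginally shorter since it avoids invoking duality. (Your explicit potential $\tfrac12\|T_t-x\|^2+\psi_t-\tfrac12\|x\|^2$ for $\nabla T_t(T_t-x)$ is the correct one; the paper's displayed formula at that step has a typo.)
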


We then demonstrate that $\mcE(t)$ is not increasing w.r.t. $t$.

\begin{proposition}\label{prop:lya}
Suppose that $E(\rho)$ satisfies Hess($\beta$) for $\beta> 0$. $\rho_t$ is the solution to (W-AIG) with $\alpha_t=2\sqrt{\beta}$. Then, $\mcE(t)$ defined in \eqref{equ:lya} satisfies $\dot \mcE(t)\leq 0.$ As a result,
$$
E(\rho_t)\leq e^{-\sqrt{\beta}t}\mcE(t)\leq e^{-\sqrt{\beta}t}\mcE(0)=\mcO(e^{-\sqrt{\beta}t}).
$$
\end{proposition}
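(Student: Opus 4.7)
The strategy is to differentiate $\mcE(t)$ along the W-AIG flow and invoke the $\beta$-strong geodesic convexity of $E$ to absorb every term containing $\delta E/\delta\rho_t$, leaving only manifestly non-positive residues. Throughout, set $w_t(x)=T_t(x)-x$ and $v_t(x)=-\sqrt{\beta}w_t(x)+\nabla\Phi_t(x)$, so that $\mcE(t)=\tfrac12 e^{\sqrt{\beta}t}\int\|v_t\|^2\rho_t dx+e^{\sqrt{\beta}t}(E(\rho_t)-E(\rho^*))$. Because the continuity equation $\p_t\rho_t+\nabla\cdot(\rho_t\nabla\Phi_t)=0$ holds, every time derivative of a $\rho_t$-integral becomes a material derivative along the characteristics $\dot X_t=\nabla\Phi_t(X_t)$ with $X_t\sim\rho_t$.

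First I would compute the two particle-level derivatives needed. From the momentum equation of (W-AIG) with $\alpha_t=2\sqrt{\beta}$, and the identity $\tfrac12\nabla\|\nabla\Phi_t\|^2=\nabla^2\Phi_t\nabla\Phi_t$, the two Hessian contributions in the chain rule for $\nabla\Phi_t(X_t)$ cancel, yielding $(d/dt)\nabla\Phi_t(X_t)=-2\sqrt{\beta}\nabla\Phi_t(X_t)-\nabla(\delta E/\delta\rho_t)(X_t)$. The stated lemma supplies $\p_t T_t=-\nabla T_t\,u_t$, giving $(d/dt)T_t(X_t)=\nabla T_t(X_t)(\nabla\Phi_t-u_t)(X_t)$. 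Chaining these ingredients produces
\begin{equation*}
\tfrac{d}{dt}v_t(X_t)=-\sqrt{\beta}\,v_t(X_t)-\beta\,w_t(X_t)-\nabla\tfrac{\delta E}{\delta\rho_t}(X_t)-\sqrt{\beta}\,\nabla T_t(X_t)(\nabla\Phi_t-u_t)(X_t).
\end{equation*}

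Plugging this into $(d/dt)\mbE\|v_t\|^2=2\mbE\la v_t,\dot v_t\ra$, adding $(d/dt)E(\rho_t)=\mbE\la\nabla(\delta E/\delta\rho_t),\nabla\Phi_t\ra$, and keeping the exponential prefactor, I expect the cross-terms arising from $v_t=-\sqrt{\beta}w_t+\nabla\Phi_t$ to cancel pairwise, leaving the schematic regrouping
\begin{equation*}
e^{-\sqrt{\beta}t}\dot\mcE(t)=\sqrt{\beta}\big[(E(\rho_t)-E(\rho^*))+\tfrac{\beta}{2}\mbE\|w_t\|^2+\mbE\la\nabla\tfrac{\delta E}{\delta\rho_t},w_t\ra\big]-\tfrac{\sqrt{\beta}}{2}\mbE\|\nabla\Phi_t\|^2-\sqrt{\beta}\,\mbE\la v_t,\nabla T_t(\nabla\Phi_t-u_t)\ra.
\end{equation*}
The $\beta$-strong geodesic convexity of $E$, applied along the Wasserstein geodesic $s\mapsto((1-s)\mathrm{Id}+sT_t)\#\rho_t$ with initial velocity corresponding to $w_t=T_t-\mathrm{Id}$, delivers $E(\rho^*)\ge E(\rho_t)+\mbE\la\nabla(\delta E/\delta\rho_t),w_t\ra+\tfrac{\beta}{2}\mbE\|w_t\|^2$, which is exactly the statement that the bracketed quantity is $\le 0$.

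For the remaining inner product, $T_t=\nabla\varphi_t$ by Brenier's theorem, so $\nabla T_t=\nabla^2\varphi_t$ is symmetric. Therefore $\la w_t,\nabla T_t(\nabla\Phi_t-u_t)\ra=\la\nabla T_t w_t,\nabla\Phi_t-u_t\ra$, whose $\rho_t$-integral vanishes by the second identity of the stated lemma, while $\la\nabla\Phi_t,\nabla T_t(\nabla\Phi_t-u_t)\ra=\la\nabla\Phi_t-u_t,\nabla T_t\nabla\Phi_t\ra$ has non-negative $\rho_t$-integral by the first inequality of the lemma. Combining, $\dot\mcE(t)\le-\tfrac{\sqrt{\beta}}{2}e^{\sqrt{\beta}t}\mbE\|\nabla\Phi_t\|^2\le 0$, so $\mcE(t)\le\mcE(0)$, and the stated rate follows from the trivial bound $E(\rho_t)-E(\rho^*)\le e^{-\sqrt{\beta}t}\mcE(t)$. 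The main obstacle is the non-Euclidean chain rule through the optimal transport map $T_t$: the auxiliary velocity field $u_t$ and the two inner-product identities provided by the stated lemma are precisely what turn the otherwise awkward $\p_t T_t$ contribution into a controllable non-positive quantity, and working consistently in the Lagrangian picture along characteristics of $\nabla\Phi_t$ keeps the bookkeeping transparent enough to see the decisive cancellations.
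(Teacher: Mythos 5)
Your proof is correct and reaches the same key inequality, $e^{-\sqrt{\beta}t}\dot{\mathcal{E}}(t)\le -\frac{\sqrt{\beta}}{2}\int\|\nabla\Phi_t\|^2\rho_t\,dx$, using the same essential ingredients (the lemma about $u_t=\p_t(T_t)^{-1}\circ T_t$, symmetry and positivity of $\nabla T_t$, and the geodesic $\beta$-convexity bound). What you do differently is the computational framework: the paper differentiates $\mathcal{E}(t)$ in Eulerian form, generating a long list of integrals involving $\nabla\cdot(\rho_t\nabla\Phi_t)$ that must then be reorganized through integration by parts (the identities \eqref{equ:idpt1}--\eqref{equ:idpt2}), whereas you work in Lagrangian form along the characteristics $\dot X_t=\nabla\Phi_t(X_t)$, which automates those integration-by-parts manipulations through material derivatives. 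Introducing $v_t=-\sqrt{\beta}(T_t-\mathrm{Id})+\nabla\Phi_t$ as a single object and computing its material derivative, $\dot v_t(X_t)=-\sqrt{\beta}v_t-\beta(T_t-\mathrm{Id})-\nabla\frac{\delta E}{\delta\rho_t}-\sqrt{\beta}\nabla T_t(\nabla\Phi_t-u_t)$, makes the cross-term cancellations algebraically transparent; the paper instead tracks the expansion of $\|\!-\!\sqrt{\beta}(T_t-x)+\nabla\Phi_t\|^2$ term by term. Your route is shorter and closer in spirit to the Taghvaei--Mehta computation that the paper discusses in its Appendix C.5; the paper's Eulerian route gives the same result while staying uniformly in the density-functional picture used throughout its appendix. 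One small point worth making explicit if you flesh this out: the material derivative of a $\rho_t$-expectation $\mathbb{E}[g_t(X_t)]$ requires $\p_t\rho_t+\nabla\cdot(\rho_t\nabla\Phi_t)=0$ so that the transport-term contribution vanishes under the expectation, and similarly $\frac{d}{dt}E(\rho_t)=\mathbb{E}\langle\nabla\frac{\delta E}{\delta\rho_t},\nabla\Phi_t\rangle$ after one integration by parts; these are the only places the Eulerian equation re-enters your Lagrangian argument.
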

Note that $\mcE(0)$ only depends on $\rho_0$. This proves the first part of Theorem 1. 

We now focus on the case where $E(\rho)$ is convex. Similarly, we construct the following Lyapunov function.
\begin{equation}\label{equ:lya2}
\begin{aligned}
\mcE(t)=&\frac{1}{2}\int  \left\| - (T_t(x)-x)+\frac{t}{2} \nabla \Phi_t(x)\right\|^2\rho_t(x) dx\\
+&\frac{t^2}{4}(E(\rho_t)-E(\rho^*)).
\end{aligned}
\end{equation}
\begin{proposition}\label{prop:lya2}
Suppose that $E(\rho)$ satisfies Hess($0$). $\rho_t$ is the solution to (W-AIG) with $\alpha_t=3/t$. Then, $\mcE(t)$ defined in \eqref{equ:lya2} satisfies $\dot \mcE(t)\leq 0.$ As a result,
$$
E(\rho_t)\leq \frac{4}{t^2}\mcE(t)\leq \frac{4}{t^2}\mcE(0)=\mcO(t^{-2}).
$$
\end{proposition}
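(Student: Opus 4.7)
The plan is to mirror the Euclidean Nesterov-AGF Lyapunov argument for convex objectives, where $\tfrac{1}{2}\|x_t-x^*+(t/2)\dot x_t\|^2+(t^2/4)(f(x_t)-f(x^*))$ is non-increasing along $\ddot x+(3/t)\dot x+\nabla f=0$ thanks to $\langle \nabla f(x_t),x_t-x^*\rangle\ge f(x_t)-f(x^*)$. The Wasserstein analogue replaces $x_t-x^*$ pointwise by $-(T_t(x)-x)$ (the inverse exponential map, identified by Proposition \ref{prop:deri}) and $\dot x_t$ by $\nabla\Phi_t(x)$, while the squared norm is averaged against $\rho_t$. I will pass to the Lagrangian/particle picture of Example \ref{exm:waig_p}, namely $\dot X_t=V_t=\nabla\Phi_t(X_t)$ and $\dot V_t=-(3/t)V_t-\nabla\tfrac{\delta E}{\delta\rho_t}(X_t)$, so that $\mcE(t)$ becomes $\tfrac12\mbE\|v_t(X_t)\|^2+\tfrac{t^2}{4}(E(\rho_t)-E(\rho^*))$ with $v_t(x)=Z_t(x)+(t/2)\nabla\Phi_t(x)$ and $Z_t(x)=-(T_t(x)-x)$.

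The first step is to differentiate the kinetic piece along particles. Using the Lemma's formula $\partial_t T_t=-\nabla T_t\,u_t$, I compute the material derivative $\tfrac{d}{dt}Z_t(X_t)=V_t-\nabla T_t(X_t)(V_t-u_t(X_t))$. Substituting the particle W-AIG equation collapses the $\tfrac{3}{2}V_t$ terms exactly, giving
\begin{equation*}
\tfrac{d}{dt}v_t(X_t)=-\nabla T_t(X_t)(V_t-u_t(X_t))-\tfrac{t}{2}\nabla\tfrac{\delta E}{\delta\rho_t}(X_t).
\end{equation*}
Then $\mbE\langle v_t,\tfrac{d}{dt}v_t\rangle$ expands into four terms. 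Against the $\nabla T_t(V_t-u_t)$ factor, the inner product with $Z_t$ vanishes by the Lemma's identity $\int\langle\nabla\Phi_t-u_t,\nabla T_t(T_t-\Id)\rangle\rho_t dx=0$ (after using symmetry of $\nabla T_t=\nabla^2\varphi$), while the inner product with $(t/2)V_t$ is $\le 0$ by the Lemma's inequality $\int\langle\nabla\Phi_t-u_t,\nabla T_t\nabla\Phi_t\rangle\rho_t dx\ge 0$. This leaves only $-\tfrac{t}{2}\mbE\langle Z_t,\nabla\tfrac{\delta E}{\delta\rho_t}\rangle-\tfrac{t^2}{4}\mbE\langle V_t,\nabla\tfrac{\delta E}{\delta\rho_t}\rangle$ plus a non-positive remainder.

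Next I differentiate the potential piece. The continuity equation $\partial_t\rho_t+\nabla\cdot(\rho_t\nabla\Phi_t)=0$ gives $\tfrac{d}{dt}E(\rho_t)=\mbE\langle\nabla\tfrac{\delta E}{\delta\rho_t}(X_t),V_t\rangle$, so $\tfrac{d}{dt}\bigl[\tfrac{t^2}{4}(E(\rho_t)-E(\rho^*))\bigr]=\tfrac{t}{2}(E(\rho_t)-E(\rho^*))+\tfrac{t^2}{4}\mbE\langle V_t,\nabla\tfrac{\delta E}{\delta\rho_t}\rangle$, and the $\tfrac{t^2}{4}$ term cancels the corresponding kinetic contribution. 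I am left with
\begin{equation*}
\dot{\mcE}(t)\le\tfrac{t}{2}\mbE\langle T_t(X_t)-X_t,\nabla\tfrac{\delta E}{\delta\rho_t}(X_t)\rangle+\tfrac{t}{2}(E(\rho_t)-E(\rho^*)).
\end{equation*}
Finally, Hess($0$) along the displacement geodesic $\gamma(s)=(sT_t+(1-s)\Id)\#\rho_t$ from $\rho_t$ to $\rho^*$, combined with Proposition \ref{prop:deri} for $\partial_s\gamma|_{s=0}$, gives the convex subgradient inequality $E(\rho^*)-E(\rho_t)\ge\int\langle\nabla\tfrac{\delta E}{\delta\rho_t},T_t-\Id\rangle\rho_t dx$, which exactly offsets the two remaining terms. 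Hence $\dot{\mcE}(t)\le 0$, and integrating together with $\mcE(t)\ge(t^2/4)(E(\rho_t)-E(\rho^*))$ yields $E(\rho_t)-E(\rho^*)\le 4\mcE(0)/t^2=\mcO(t^{-2})$.

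The main obstacle is the $\nabla T_t(V_t-u_t)$ term: unlike Euclidean space where $\tfrac{d}{dt}(x_t-x^*)=\dot x_t$ is trivial, in Wasserstein space the OT map $T_t$ depends on $\rho_t$ and its Lagrangian derivative contributes a genuinely non-vanishing piece, whose cancellation/signed estimate is only secured by the precise pair of identities in the Lemma and by the symmetry of $\nabla T_t$. A secondary point to check is that the cancellation between the potential-part derivative and the $V_t$-part of the kinetic derivative is exact, which requires careful use of the continuity equation and integration by parts; the convex geodesic inequality itself is then a one-line application of Hess($0$) at fixed $t$, exactly paralleling $\langle\nabla f(x),x-x^*\rangle\ge f(x)-f(x^*)$.
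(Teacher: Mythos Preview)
Your proof is correct. It uses the same key ingredients as the paper --- the Lemma on $u_t$ and $\partial_tT_t$ (giving the equality $\int\langle\nabla\Phi_t-u_t,\nabla T_t(T_t-\Id)\rangle\rho_t\,dx=0$ and the inequality $\int\langle\nabla\Phi_t-u_t,\nabla T_t\nabla\Phi_t\rangle\rho_t\,dx\ge0$), the symmetry $\nabla T_t=\nabla^2\Psi_t$, and the displacement-convexity inequality from Proposition~\ref{prop:geo} --- but the organization is genuinely different.

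The paper differentiates $\mcE(t)$ in \emph{Eulerian} coordinates: it expands $\dot\mcE(t)$ term by term (the display \eqref{equ:et00}), then converts every $\nabla\!\cdot(\rho_t\nabla\Phi_t)$ via integration by parts into the identities \eqref{equ:idpt1}--\eqref{equ:idpt2}, and separately rewrites the $\partial_tT_t$ terms via \eqref{equ:tttt1}--\eqref{equ:tttt2}, before a final regrouping against the (W-AIG) equation. You instead pass to the \emph{Lagrangian} particle picture $\dot X_t=V_t$, $\dot V_t=-(3/t)V_t-\nabla\tfrac{\delta E}{\delta\rho_t}(X_t)$ and differentiate $\tfrac12\mbE\|v_t(X_t)\|^2$ along trajectories. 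This packages all the continuity-equation bookkeeping into the material derivative, so the $\tfrac32V_t$ cancellation and the $\tfrac{t^2}{4}\mbE\langle V_t,\nabla\tfrac{\delta E}{\delta\rho_t}\rangle$ cancellation appear in one line each, and the only surviving ``Wasserstein'' contribution is the single term $-\mbE\langle v_t,\nabla T_t(V_t-u_t)\rangle$, disposed of directly by the Lemma plus symmetry of $\nabla T_t$. Your route is shorter and makes the parallel with the Euclidean AGF proof transparent; the paper's Eulerian computation is longer but stays entirely at the PDE level and makes explicit which integration-by-parts identities are being used. Your approach is essentially the one taken in \cite{affpd} (cf.\ the paper's comparison in \S\ref{app:cmp}), with the crucial difference that you correctly invoke the Lemma to obtain $\mbE\langle v_t,\nabla T_t(V_t-u_t)\rangle\ge0$ rather than assuming it vanishes.
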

Because $\mcE(0)$ only depends on $\rho_0$, we complete the proof. 

\section{Discrete-time algorithms for AIG flows}
\label{sec:dis_alg}
In this section, we present the discrete-time particle implementation of the flow %\eqref{equ:w_aig_p_kl} 
\eqref{equ:w_aig} based on the particle W-AIG flow \eqref{equ:w_aig_p}. Similar discrete-time algorithms of \eqref{equ:kw_aig} and \eqref{equ:s_aig} are provided in the supplementary material. Here we mainly introduce a kernel bandwidth selection method and an adaptive restart technique to deal with difficulties in numerical implementations.

A typical choice of $E(\rho)$ for sampling is the KL divergence 
%from $\rho$ to a target density $\rho^*\in \mcP(\Omega)$, 
\begin{equation*}
\mathrm{D}_{\textrm{KL}}(\rho\|\rho^*) = \int\rho \log\frac{\rho}{e^{-f}}dx-\log Z,
\end{equation*}
where the target density $\rho^*(x)\propto \exp(-f(x))$ and $Z=\int \exp(-f(x))dx$. Then, \eqref{equ:w_aig_p} is equivalent to
\begin{equation}\label{equ:w_aig_p_kl}
\left\{
\begin{aligned}
&dX_t = V_t dt,\\
&dV_t =-\alpha_tV_t dt-\nabla f(X_t)dt-\nabla \log \rho_t(X_t)dt.
\end{aligned}
\right.
\end{equation}

Consider a particle system $\{X_0^i\}_{i=1}^N$ and let $V_0^i=0$. %For simplicity, we consider constant step sizes. The time parameter $t$ is related to the step size $\sqrt{\tau_k}$ via $t=\sqrt{\tau}k$.
In $k$-th iteration, the update rule follows
%of the discrete-time W-AIG flow in the particle level
\begin{equation}\label{equ:vx}
\left\{
\begin{aligned}
&X_{k+1}^i = X_k^i+\sqrt{\tau_k} V_{k+1}^i,\\
&V_{k+1}^i = \alpha_kV_k^i-\sqrt{\tau_k}(\nabla f(X_k^i)+\xi_k(X_k^i)),\\
\end{aligned}\right.
\end{equation}
for $i=1,2\dots N$. If $E(\rho)$ is $\beta$-strongly convex, then $\alpha_k = \frac{1-\sqrt{\beta\tau_k}}{1+\sqrt{\beta\tau_k}}$; if $E(\rho)$ is convex or $\beta$ is unknown, then $\alpha_k = \frac{k-1}{k+2}$. 
Here $\xi_k(x)$ is an approximation of $\nabla\log\rho_k(x)$.
% We review two common choices of $\xi_k$ as follows. If $X_k^i$ follows a Gaussian distribution, then
% \begin{equation}\label{xi:gauss}
% \xi_k(x) = -\Sigma_k^{-1}(x-m_k),
% \end{equation}
% where $m_k$ and $\Sigma_k$ are the mean and the covariance matrix of $\{X_k^i\}_{i=1}^N$. 
For a general distribution, we use the kernel density estimation (KDE) \cite{ioskn}, $\tilde \rho_k(x) =\frac{1}{N}\sum_{i=1}^NK(x,X_k^i)$ to approximate $\rho_k(x)$. Here $K(x,y)$ is a positive kernel function. Then, $\xi_k$ writes
\begin{equation}\label{xi:rbf}
\xi_k(x) = \nabla\log\tilde \rho_k(x) = \frac{\sum_{i=1}^N\nabla_xK(x,X_k^i)}{\sum_{i=1}^NK(x,X_k^i)}.
\end{equation}
A common choice of $K(x,y)$ is a Gaussian kernel with the bandwidth $h$, $K(x,y) = (2\pi h)^{-n/2}\exp\lp-\|x-y\|^2/(2h)\rp$. Such approximation can also be found in information-theoretic learning \cite{itl} and independent component analysis (ICA) \cite{aitat}. 
%Our numerical implementations of W-AIG flows can be viewed as a special ParVI method. Compared to traditional MCMC methods, ParVI methods are more sample-efficient because make full use of a finite number of particles by taking particle interaction into account.

There are two difficulties in the time discretization. For one thing, the bandwidth $h$ strongly affects the estimation of $\nabla\log\rho_t$, so we propose the BM method to learn the bandwidth from Brownian-motion samples. For another, the second equation in \eqref{equ:w_aig} is the Hamilton-Jacobi equation, which usually has strong stiffness. In numerical trials, we observe that the densities from the particles may collapse in certain dimensions following W-AIG flows, even for Gaussian target density. Therefore, we propose an adaptive restart technique to deal with this problem.

\begin{remark}
Using symplectic integrators for the particle implementation of W-AIG could help improve the performance. It is important to study the time-discretization of the (damped) Hamiltonian flow in the future.
\end{remark}

\subsection{Learn the bandwidth via Brownian motion}
\label{ssec:bm}
SVGD uses a median (MED) method to choose the bandwidth, i.e.,
\begin{equation}\label{h:med}
h_k = \frac{1}{2\log(N+1)}\text{median}\lp\{\|X_k^i-X_k^j\|^2\}_{i,j=1}^N\rp. 
\end{equation}
Liu et al. \cite{afomo} propose a Heat Equation (HE) method to adaptively adjust bandwidth. Motivated by the HE method, we introduce the Brownian motion (BM) method to adaptively learn the kernel bandwidth based on Brownian-motion samples generated in each iteration. 

Given the bandwidth $h$, $\{X_k^i\}_{i=1}^N$ and a step size $s$, we can compute two particle systems:
\begin{equation*}
Y_k^i(h) = X_k^i-s\xi_k(x;h), \quad Z_k^i = X_k^i+\sqrt{2s}B^i,\quad i=1,\dots N
\end{equation*}
where $B^i$ is the standard Brownian motion. Denote the empirical distributions of $\{X_k^i\}_{i=1}^N$, $\{Y_k^i\}_{i=1}^N$ and $\{Z_k^i\}_{i=1}^N$ by $\hat \rho_X, \hat \rho_Y$ and $\hat \rho_Z$. With $n\to \infty$, we shall have $\hat \rho_Y=\hat\rho_Z=\rho_t|_{t=s}$, where $\hat \rho_t$ satisfies $\p_t \hat \rho_t=\Delta\hat \rho_t=\nabla\cdot(\hat \rho_t\nabla\log \hat \rho_t)$ 
with initial value $\hat \rho_t|_{t=0}=\hat \rho_X$. With an appropriate bandwidth $h$, we shall also have $\hat\rho_Y=\rho_t|_{t=s}$.
%$Y_k^i$ is a function of $h$ and $Z_k^i$ is generated from Brownian motion. 
Hence, we consider the following optimization problem
\begin{equation}\label{equ:mmd}
\min_h \MMD(\hat\rho_Y,\hat\rho_Z)=\int\int  (\hat \rho_Y(y)-\hat \rho_Z(y)) k(y,z) (\hat \rho_Y(z)-\hat \rho_Z(z))dydz.
\end{equation}
where MMD (maximum mean discrepancy) evaluates the similarity between $\{Y_k^i\}_{i=1}^N$ and $\{Z_k^i\}_{i=1}^N$. Here, the kernel $k(y,z)$ in MMD is chosen as a Gaussian kernel with bandwidth $1$. So we optimize \eqref{equ:mmd} using the bandwidth $h_{k-1}$ from the last iteration as the initialization. For simplicity we denote 
$$
\text{BM}(h_{k-1},\{X_k^i\}_{i=1}^N,s)
$$ 
as the minimizer of problem \eqref{equ:mmd}. It is the output of the BM method.

\begin{remark}
Besides KDE, there are other methods that approximate the term $\nabla\log\rho_t(x)$ (compute $\xi_k$) via a kernel function, such as the blob method \cite{abmfd} and the diffusion map \cite{affpd}. The BM method can also select the kernel bandwidth for these methods. 
\end{remark}

\subsection{Adaptive restart}
To enhance the practical performance, we introduce an adaptive restart technique, which shares the same idea of gradient restart in \cite{arfag,tsdcm} under the Euclidean case. Consider
\begin{equation}\label{phik}
\varphi_k =- \sum_{i=1}^N\la V_{k+1}^i, \nabla f(X_k^i)+\xi_k(X_k^i)\ra,
\end{equation}
which can be viewed as discrete-time approximation of 
$$
-g_{\rho_t}^W(\p_t\rho_t,G^W(\rho_t)^{-1}\frac{\delta E}{\delta \rho_t})=-\p_tE(\rho_t).
$$
If $\varphi_k<0$, then we restart the algorithm with initial values $X_0^i=X_k^i$ and $V_0^i=0$. This essentially keeps $\p_tE(\rho_t)$ negative along the trajectory. 
%The numerical results show that the adaptive restart accelerates and stabilizes the discrete-time algorithm. 
The overall algorithm is summarized below.
\begin{algorithm}[!htp]
\caption{Discrete-time particle implementation of W-AIG flow}
\label{alg:w_aig}
\begin{algorithmic}[1]
\REQUIRE initial positions $\{X_0^i\}_{i=1}^N$, step size $\tau$, number of iteration $L$.
\STATE Set $k=0$, $V_0^i=0, i=1,\dots N$. Set the bandwidth $h_0$ by MED \eqref{h:med}. 
\FOR{$l=1,2,\dots L$}
\STATE Compute $h_l$ based on BM method:  $h_l=\text{BM}(h_{l-1},\{X_k^i\}_{i=1}^N,\sqrt{\tau})$.
\STATE Calculate $\xi_k(X_k^i)$ by \eqref{xi:rbf} with bandwidth $h_{l}$. 
\STATE %Set $\alpha_k$ based on whether $E(\rho)$ is Hess($\beta$) or Hess($0$). 
For $i=1,2,\dots N$, update $V_{k+1}^i$ and $X_{k+1}^i$ by \eqref{equ:vx}.
%
% $V_{k+1}^i = \alpha_kV_k^i-\sqrt{\tau}(\nabla f(X_k^i)+\xi_k(X_k^i)),\quad X_{k+1}^i = X_k^i+\sqrt{\tau} V_{k+1}^i$.
%\IF{RESTART}
\STATE Compute $\varphi_k$ by \eqref{phik}. 
\STATE If $\varphi_k<0$, set $X_0^i=X_k^i$ and $V_0^i=0$ and $k=0$; otherwise set $k=k+1$. 
% \ELSE
% \STATE Set $k=k+1$. 
% \ENDIF
\ENDFOR
\end{algorithmic}
\end{algorithm}

\section{Numerical experiments}
\label{sec:num}
In this section, we present several numerical experiments to demonstrate the effectiveness of BM method, the acceleration effect of AIG flows, and the strength of adaptive restart technique. %For concreteness of presentation, we focus on W-AIG flows.
Implementation details % and additional examples % for KW-AIG flows and S-AIG flows 
are provided in the supplementary material.  

\subsection{Toy examples}
We first generate samples from a toy bi-modal distribution in \citep{viwnf}. We compare sampling algorithms based on gradient flows and accelerated gradient flows under Wasserstein metric, Kalman-Wasserstein metric and Stein metric. The number of particles follow $N=200$. The initial distribution of the particle system follows $\mcN([0,10]',I)$. 

For the approximation of $\nabla\log \rho_k$, we use a Gaussian kernel and the kernel bandwidth is selected by the BM method. We apply the restart technique for discrete-time algorithms of AIG flows. For W-GF, W-AIG, SVGD and S-AIG, we take the step size $\tau_k=0.1$. For KW-GF and KW-AIG, we set the regularization parameter $\lambda=1$ and the step size $\tau_k=0.02$. We choose a smaller step size for the Kalman-Wasserstein metric because the particle system may blow up for a larger step size. For SVGD and S-AIG, we use a Gaussian kernel with fixed bandwidth $1$. The step size of SVGD is adjusted by Adagrad. 

From Figure \ref{fig:toy}, the convergence rate of the particle system depends on the metric. For a fixed metric, samples generated by accelerated gradient flows always converge faster than the ones generated by gradient flows. %Nevertheless, S-AIG may leads to the degeneracy of the particle system. This may result from the approximation of $\nabla \log \rho_k$ in updating $V_k^i$.

\begin{figure}[!htbp]
\centering
\begin{minipage}[t]{0.24\textwidth}
\centering
\includegraphics[width=\linewidth]{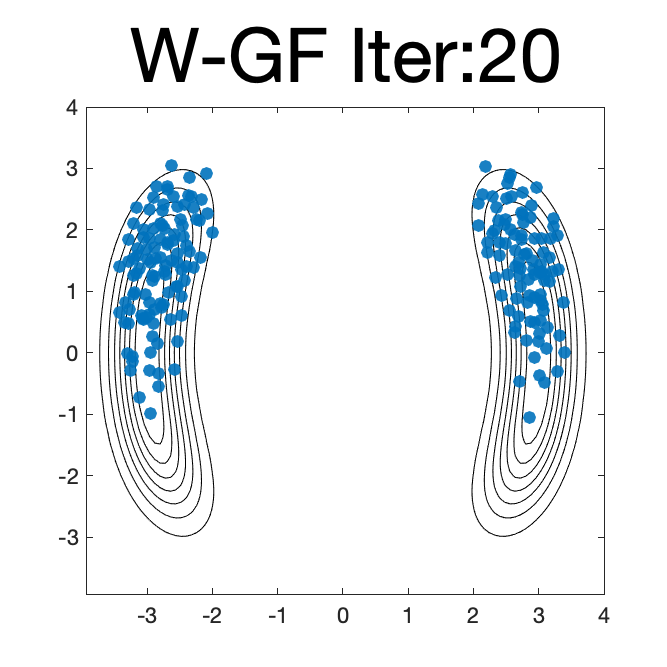}
\end{minipage}
\begin{minipage}[t]{0.24\textwidth}
\centering
\includegraphics[width=\linewidth]{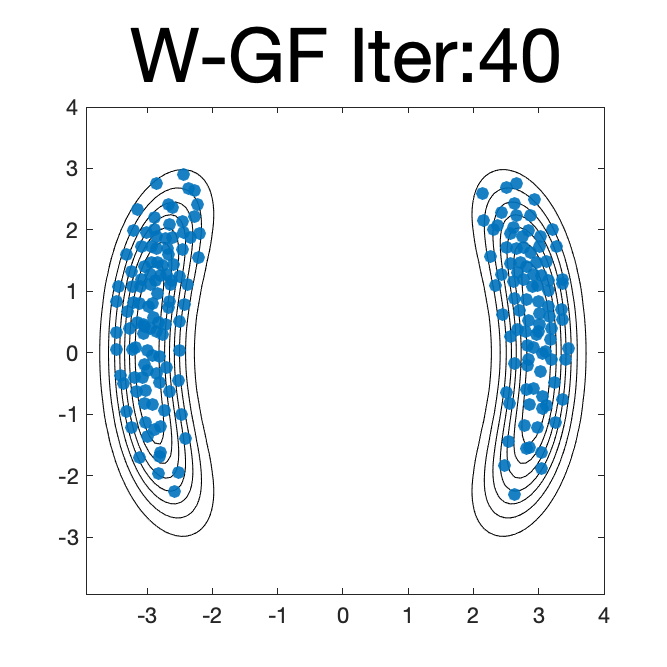}
\end{minipage}
\begin{minipage}[t]{0.24\textwidth}
\centering
\includegraphics[width=\linewidth]{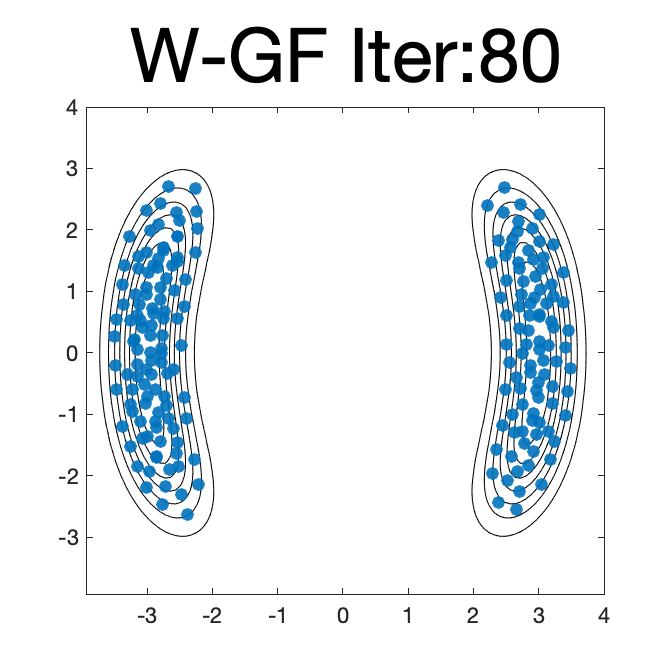}
\end{minipage}
\begin{minipage}[t]{0.24\textwidth}
\centering
\includegraphics[width=\linewidth]{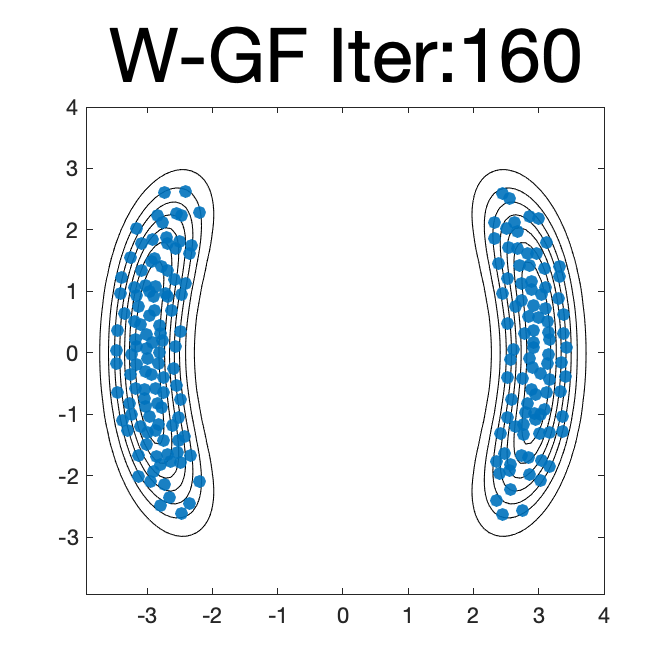}
\end{minipage}
\begin{minipage}[t]{0.24\textwidth}
\centering
\includegraphics[width=\linewidth]{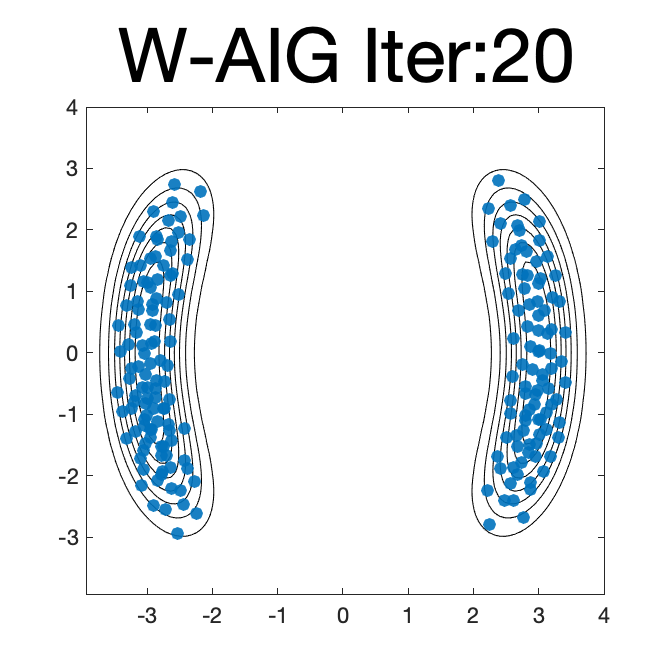}
\end{minipage}
\begin{minipage}[t]{0.24\textwidth}
\centering
\includegraphics[width=\linewidth]{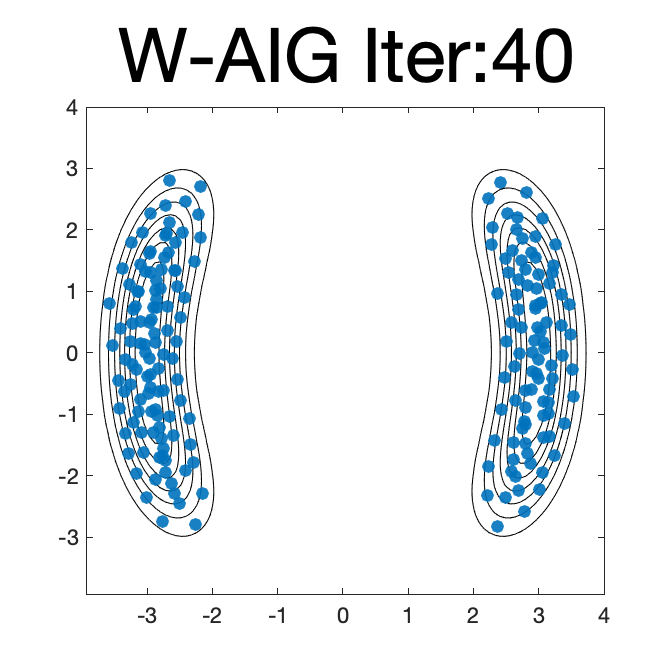}
\end{minipage}
\begin{minipage}[t]{0.24\textwidth}
\centering
\includegraphics[width=\linewidth]{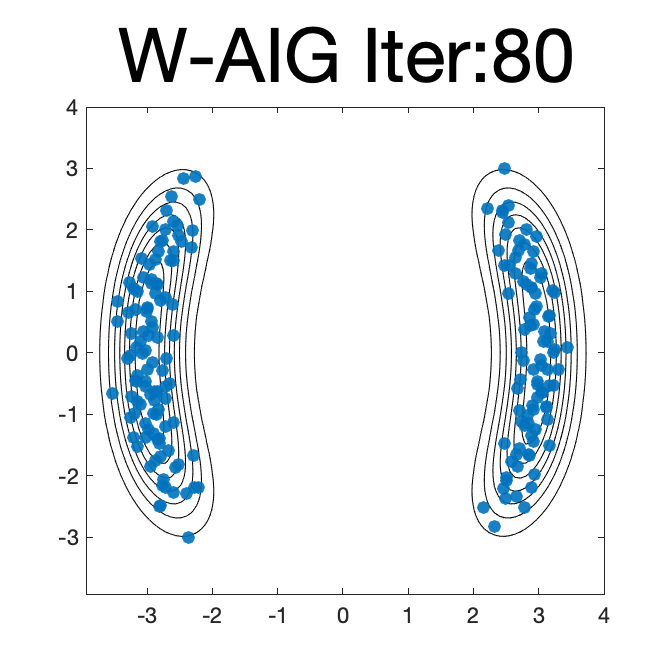}
\end{minipage}
\begin{minipage}[t]{0.24\textwidth}
\centering
\includegraphics[width=\linewidth]{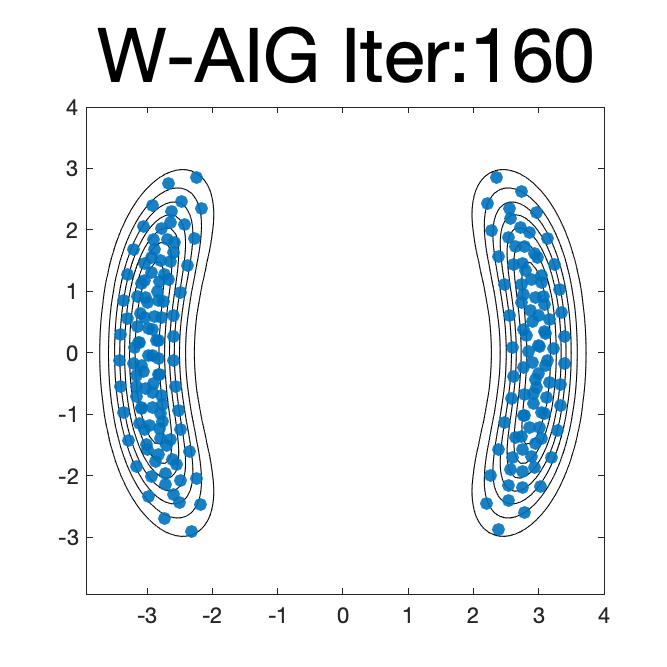}
\end{minipage}
\begin{minipage}[t]{0.24\textwidth}
\centering
\includegraphics[width=\linewidth]{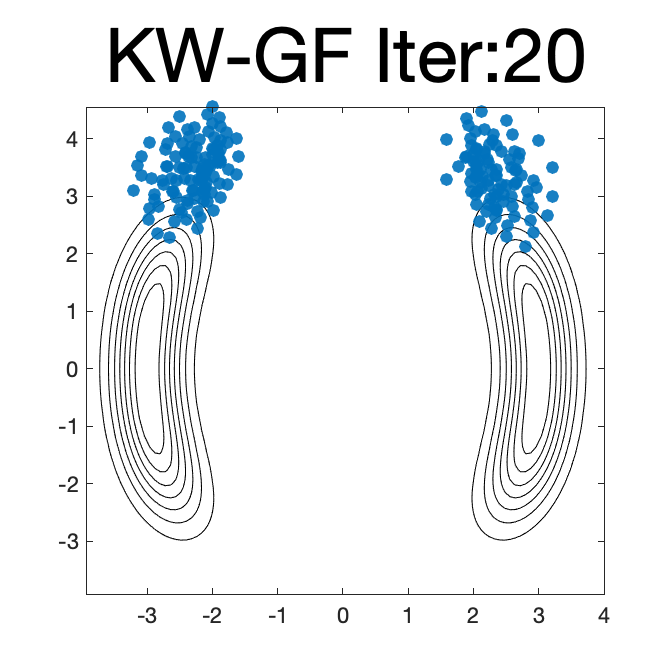}
\end{minipage}
\begin{minipage}[t]{0.24\textwidth}
\centering
\includegraphics[width=\linewidth]{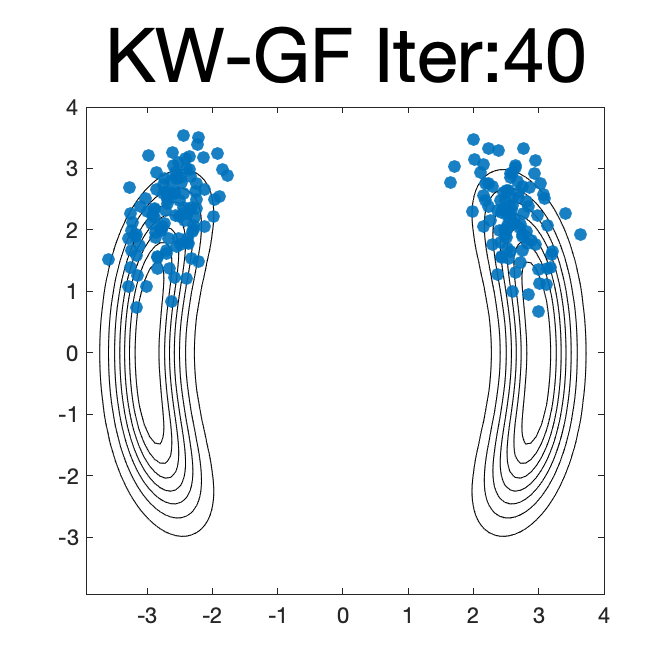}
\end{minipage}
\begin{minipage}[t]{0.24\textwidth}
\centering
\includegraphics[width=\linewidth]{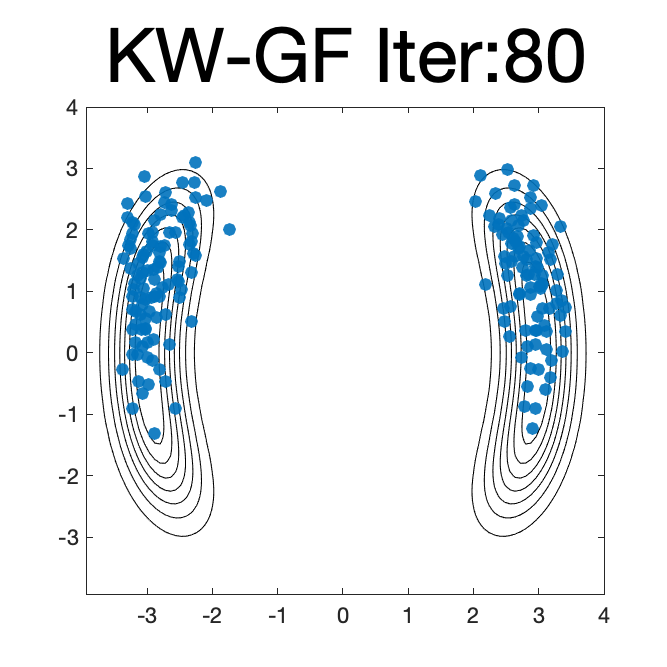}
\end{minipage}
\begin{minipage}[t]{0.24\textwidth}
\centering
\includegraphics[width=\linewidth]{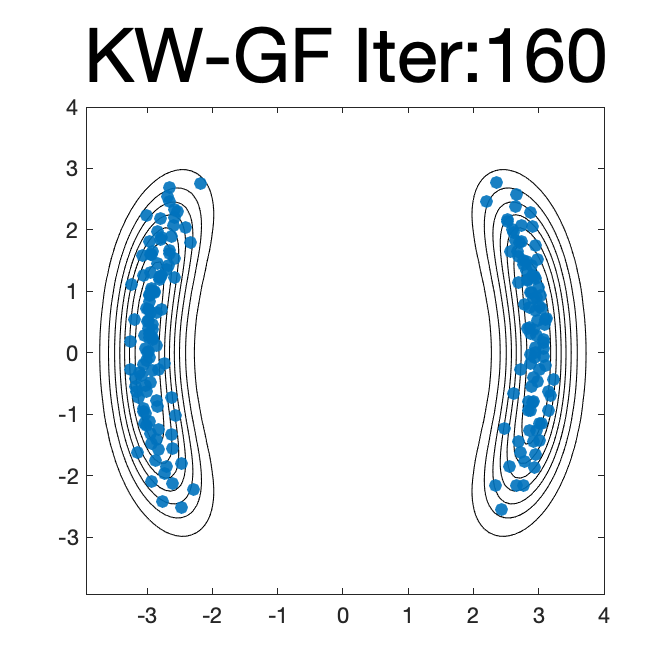}
\end{minipage}
\begin{minipage}[t]{0.24\textwidth}
\centering
\includegraphics[width=\linewidth]{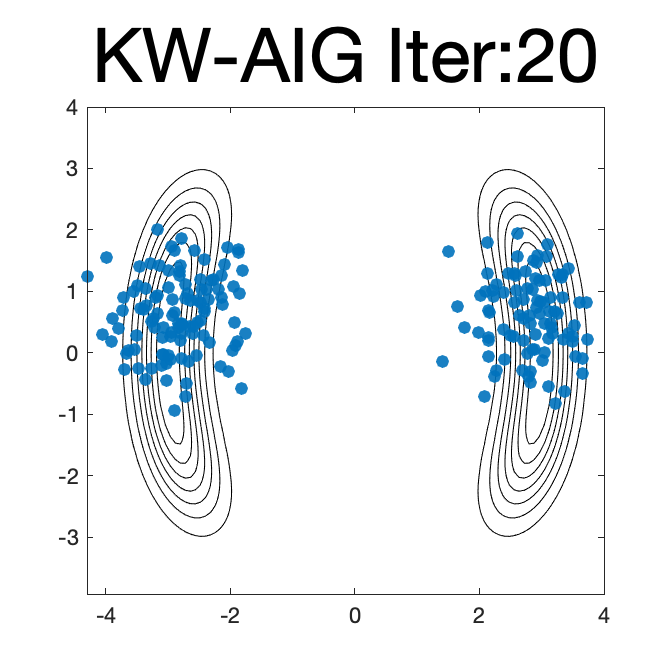}
\end{minipage}
\begin{minipage}[t]{0.24\textwidth}
\centering
\includegraphics[width=\linewidth]{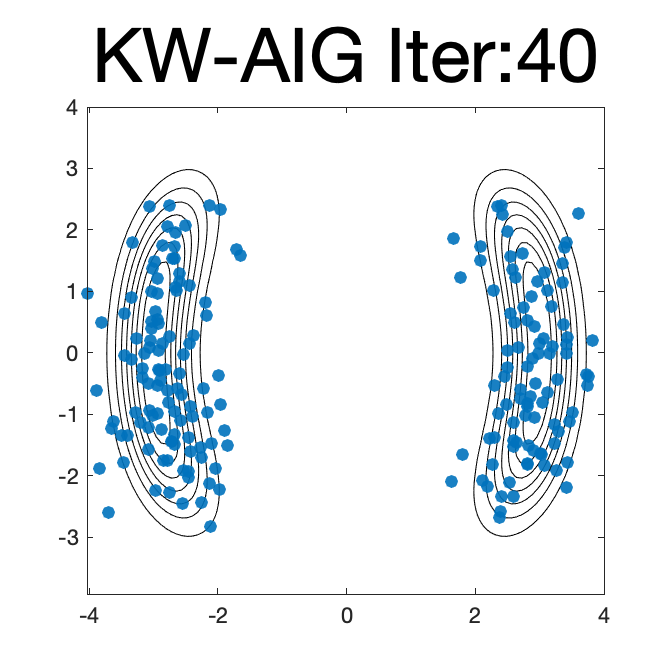}
\end{minipage}
\begin{minipage}[t]{0.24\textwidth}
\centering
\includegraphics[width=\linewidth]{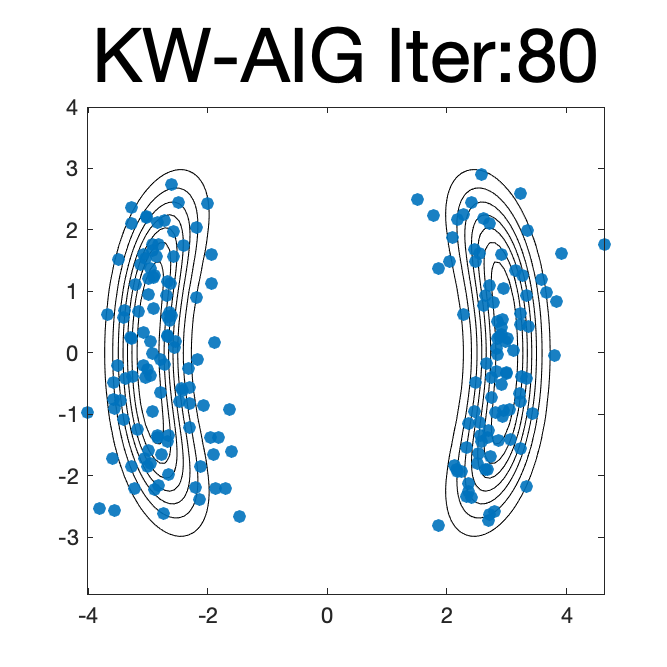}
\end{minipage}
\begin{minipage}[t]{0.24\textwidth}
\centering
\includegraphics[width=\linewidth]{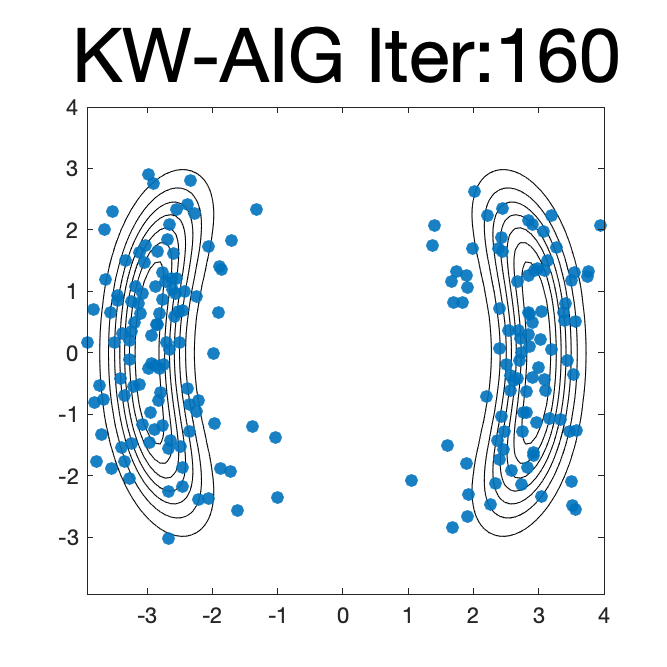}
\end{minipage}
\begin{minipage}[t]{0.24\textwidth}
\centering
\includegraphics[width=\linewidth]{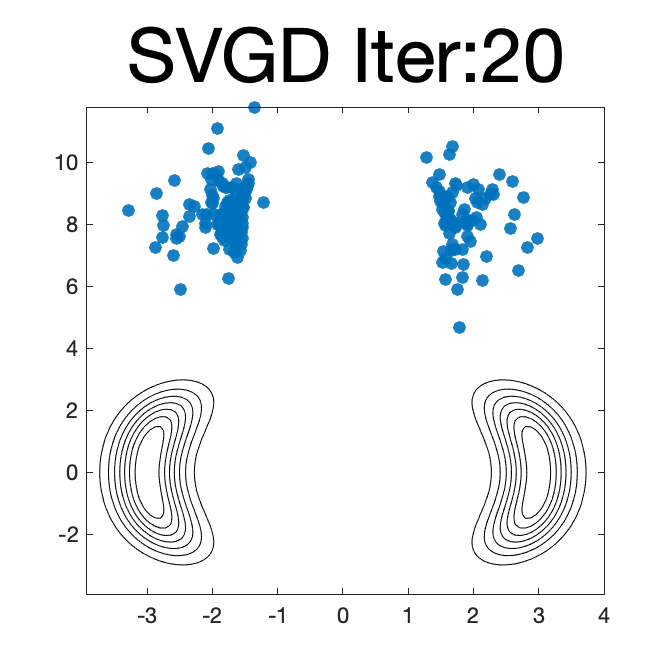}
\end{minipage}
\begin{minipage}[t]{0.24\textwidth}
\centering
\includegraphics[width=\linewidth]{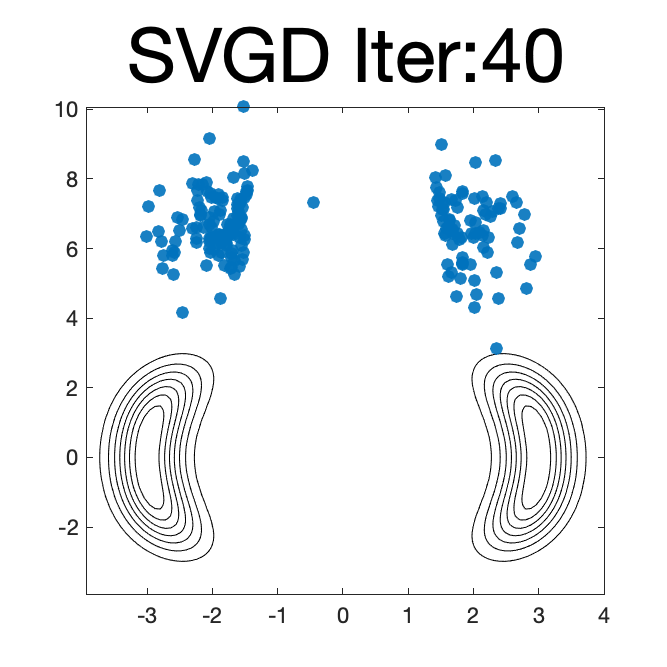}
\end{minipage}
\begin{minipage}[t]{0.24\textwidth}
\centering
\includegraphics[width=\linewidth]{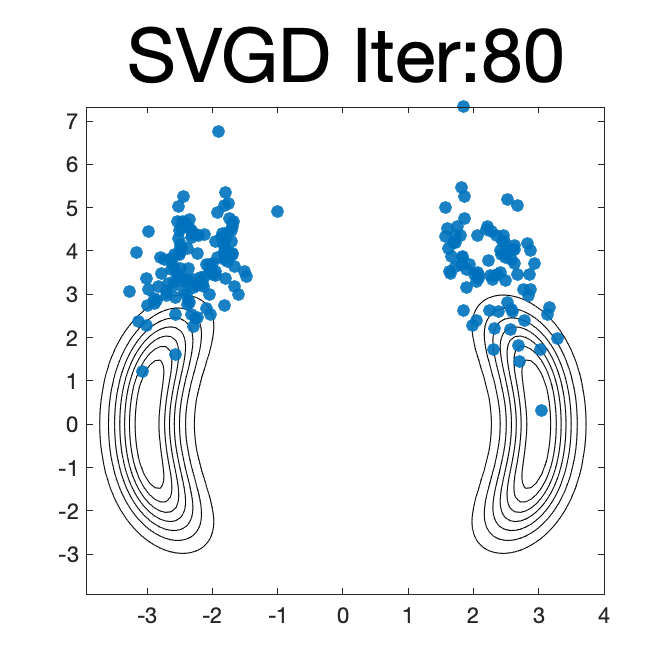}
\end{minipage}
\begin{minipage}[t]{0.24\textwidth}
\centering
\includegraphics[width=\linewidth]{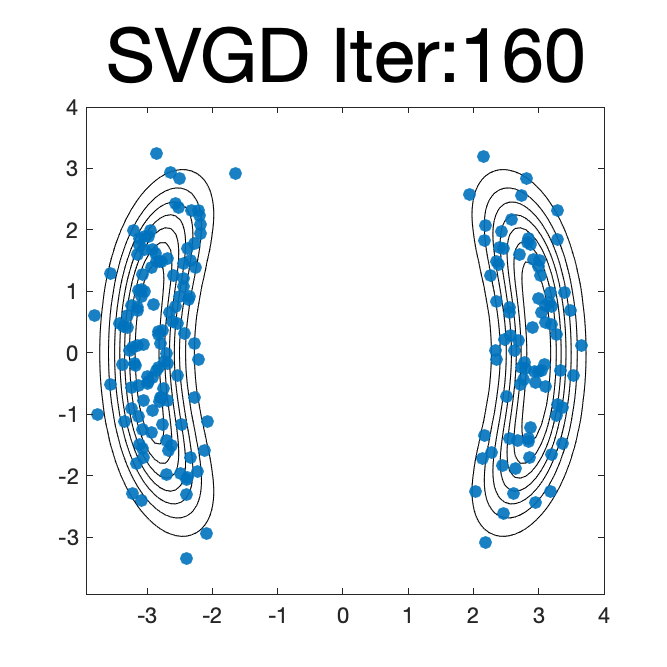}
\end{minipage}
\begin{minipage}[t]{0.24\textwidth}
\centering
\includegraphics[width=\linewidth]{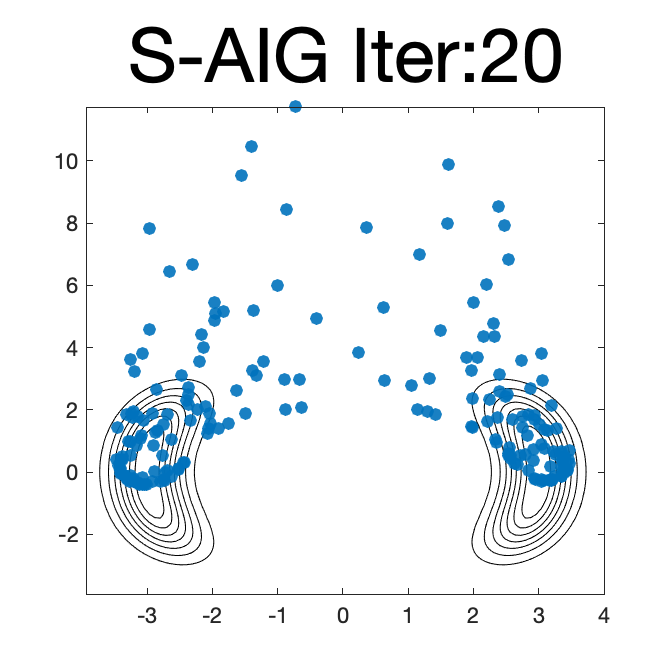}
\end{minipage}
\begin{minipage}[t]{0.24\textwidth}
\centering
\includegraphics[width=\linewidth]{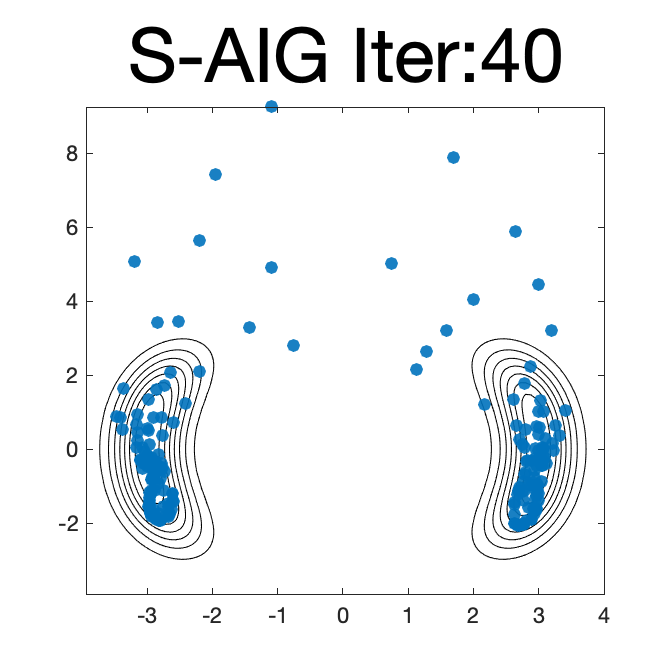}
\end{minipage}
\begin{minipage}[t]{0.24\textwidth}
\centering
\includegraphics[width=\linewidth]{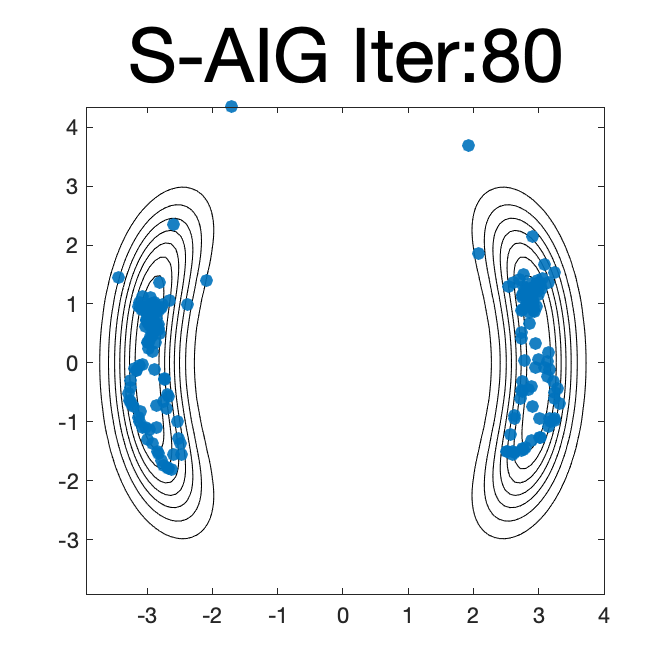}
\end{minipage}
\begin{minipage}[t]{0.24\textwidth}
\centering
\includegraphics[width=\linewidth]{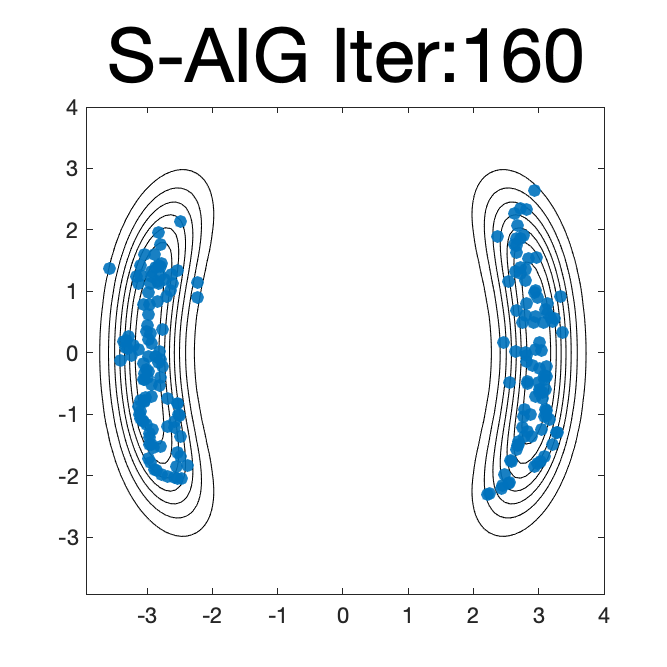}
\end{minipage}
\caption{Comparison of different AIG flows on a toy example. }\label{fig:toy}% over the Covertype dataset
\end{figure}

\subsection{Effect of BM method}
\label{ssec:te}
We first investigate the validity of the BM method in selecting the bandwidth. The target density $\rho^*$ is a toy bi-modal distribution \citep{viwnf}. We compare two types of particle implementations of the Wasserstein gradient flow over KL divergence:
\begin{equation*}
\begin{aligned}
&X_{k+1}^i = X_k^i-\tau\nabla f(X_k^i)+\sqrt{2\tau}B_{k}^i,\\
&X_{k+1}^i = X_k^i-\tau(\nabla f(X_k^i)+\xi_k(X_k^i)).
\end{aligned}
\end{equation*}
Here $B_k^i\sim\mcN(0,1)$ is the standard Brownian motion and $\xi_k$ is estimated via KDE. The first method is known as the Langevin MCMC method and the second method is called the ParVI method. For ParVI methods, the bandwidth $h$ is selected by MED/HE/BM respectively. The initial distribution of the particle system follows the standard Gaussian $\mcN(0,I)$. The objective density function follows
\begin{equation*}
\begin{aligned}
\rho^*(x)\propto& \exp(-2(\|x\|-3)^2)\\
 &\times (\exp(-2(x_1-3)^2)+\exp(-2(x_1+3)^2)).
\end{aligned}
\end{equation*}
All methods run for $200$ iterations using the same fixed step size $\tau = 0.1$.

Figure \ref{fig:toy} shows the distribution of $200$ samples based on different methods. Samples from MCMC match the target distribution in a stochastic way; samples from MED collapse; samples from HE align tidily around contour lines; samples from BM arrange neatly and are closer to samples from MCMC. This indicates that the BM method makes the particle system behave similar to MCMC, though in a deterministic way.  
\begin{figure}[!htbp]
\centering
\begin{minipage}[t]{0.24\textwidth}
\centering
\includegraphics[width=\linewidth]{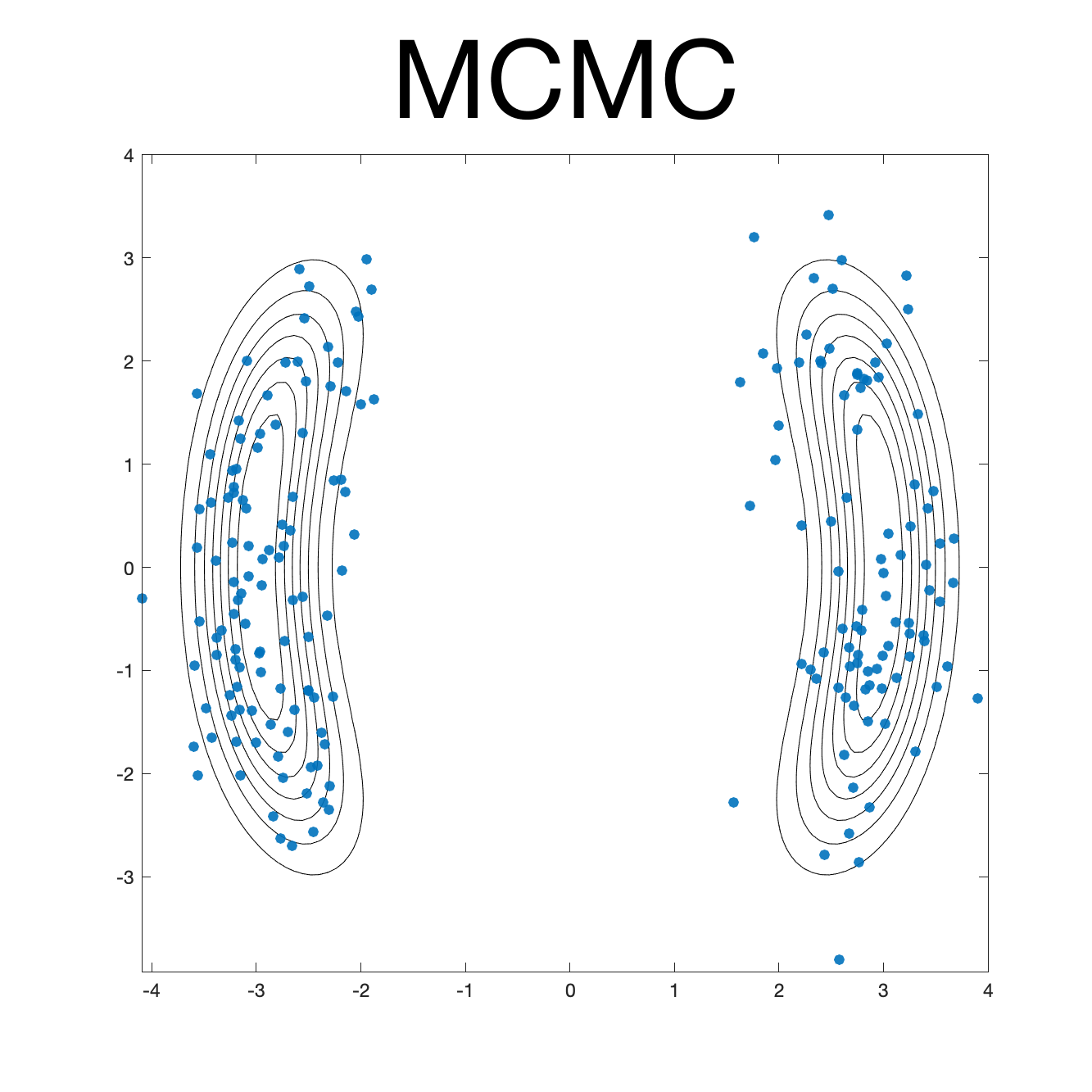}
\end{minipage}
\begin{minipage}[t]{0.24\textwidth}
\centering
\includegraphics[width=\linewidth]{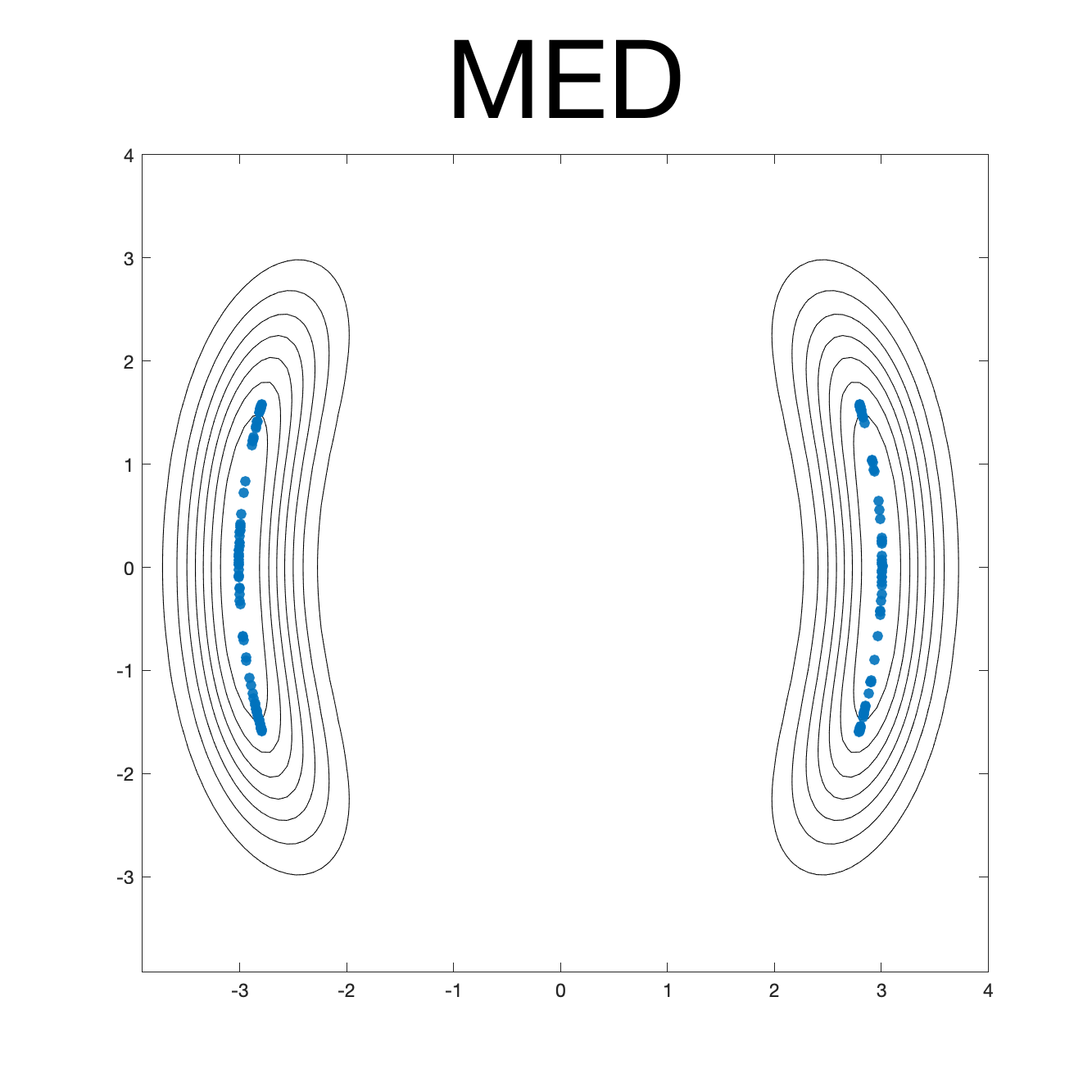}
\end{minipage}
\begin{minipage}[t]{0.24\textwidth}
\centering
\includegraphics[width=\linewidth]{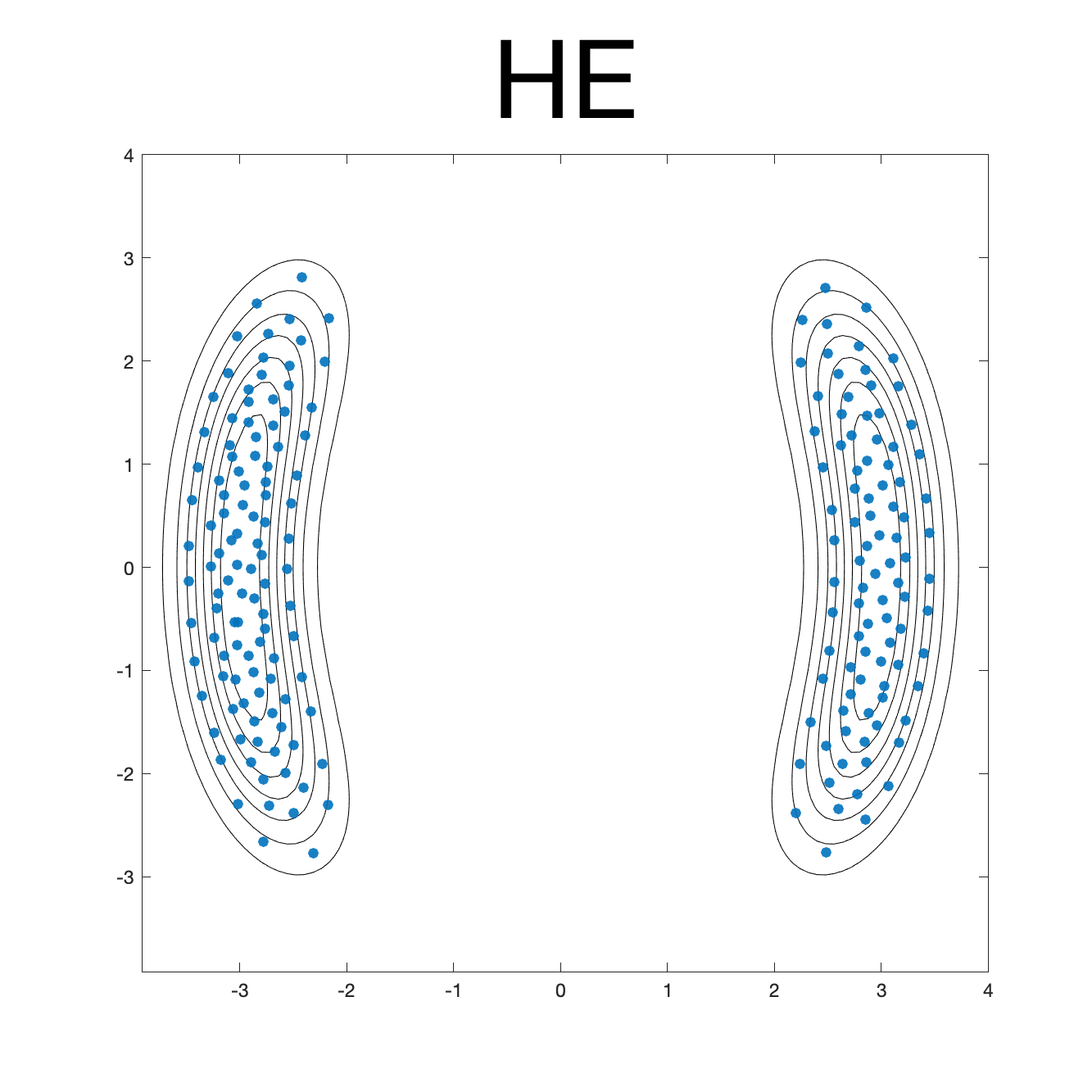}
\end{minipage}
\begin{minipage}[t]{0.24\textwidth}
\centering
\includegraphics[width=\linewidth]{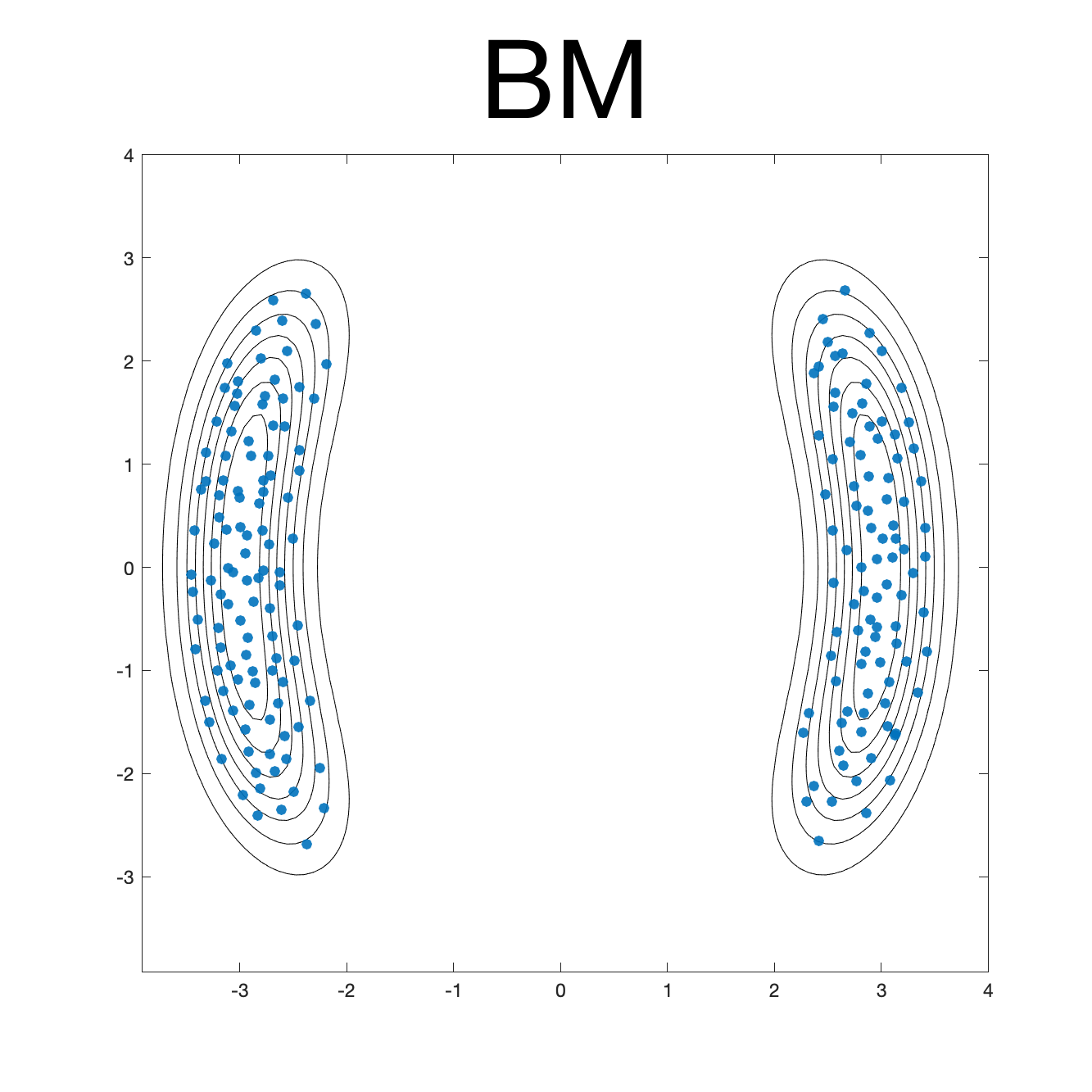}
\end{minipage}
\caption{The effect of the BM method. Samples are plotted as blue dots. Left to right: MCMC, MED, HE and BM. All methods are run for $200$ iterations with the same initialization.}\label{fig:toy_band}
\end{figure}

\subsection{Bayesian logistic regression}
\label{ssec:blr}
We perform the standard Bayesian logistic regression experiment on the Covertype dataset, following the same settings as \cite{SVGD}. Our methods are compared with MCMC, SVGD \cite{SVGD}, WNAG \cite{afomo} and WNes \cite{uaapb}. SVGD is a gradient descent method based on the Stein metric, which approximates W-GF, see \cite[Theorem 2]{uaapb}. WNAG and WNes are two accelerated methods based on W-GF.
%WGF is the discretization of the Wasserstein gradient flow; AIG and AIG-r denote the discretization of \eqref{equ:w_aig_p_kl} by \eqref{equ:vx} without/with adaptive restart. 

We select the kernel bandwidth using either the MED method or the proposed BM method. Figure \ref{fig:blr} indicates that the BM method accelerates and stabilizes the performance of GFs and AIGs. The performance of MCMC and WGF are similar and they achieve the best log-likelihood. For a given metric, AIG flows have better test accuracy and test log-likelihood in first 2000 iterations. W-AIG and KW-AIG achieve $75\%$ test accuracy in less than 500 iterations.

\begin{figure}[!htbp]
\centering
\begin{minipage}[t]{0.48\textwidth}
\centering
\includegraphics[width=\linewidth]{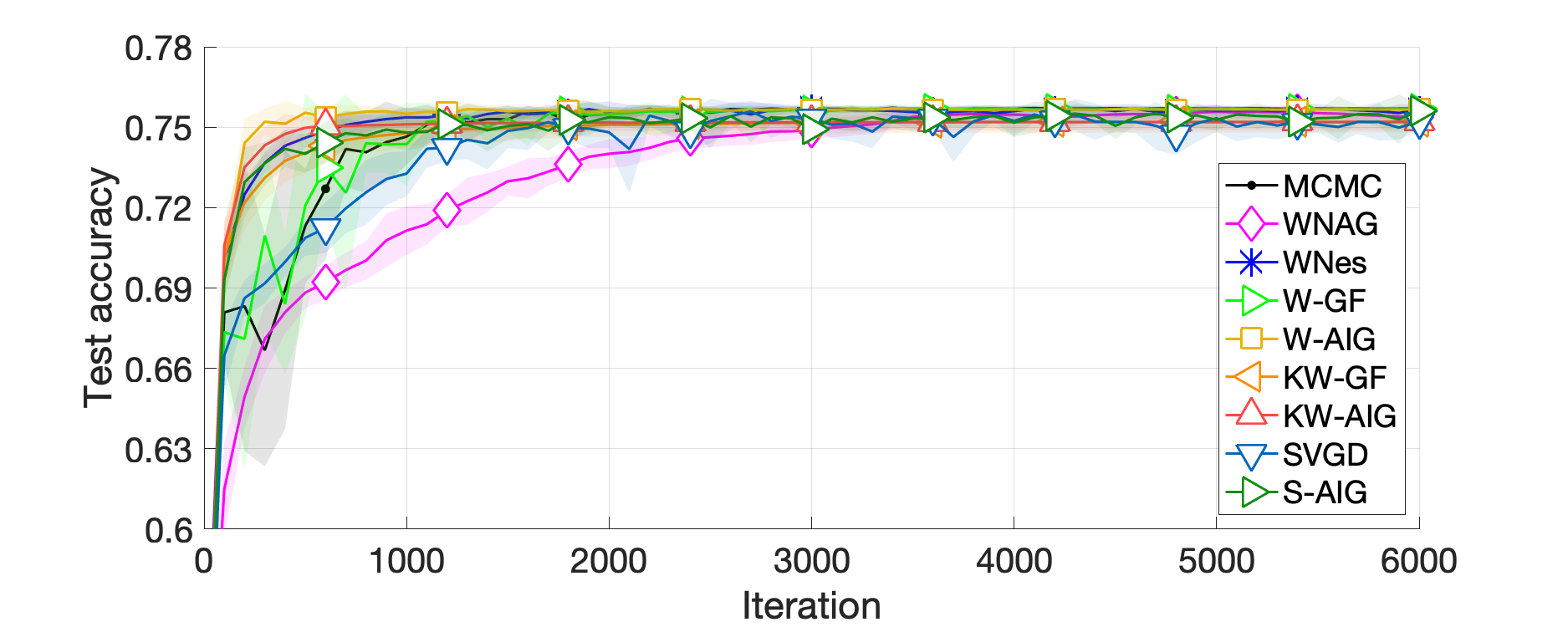}
\end{minipage}
\begin{minipage}[t]{0.48\textwidth}
\centering
\includegraphics[width=\linewidth]{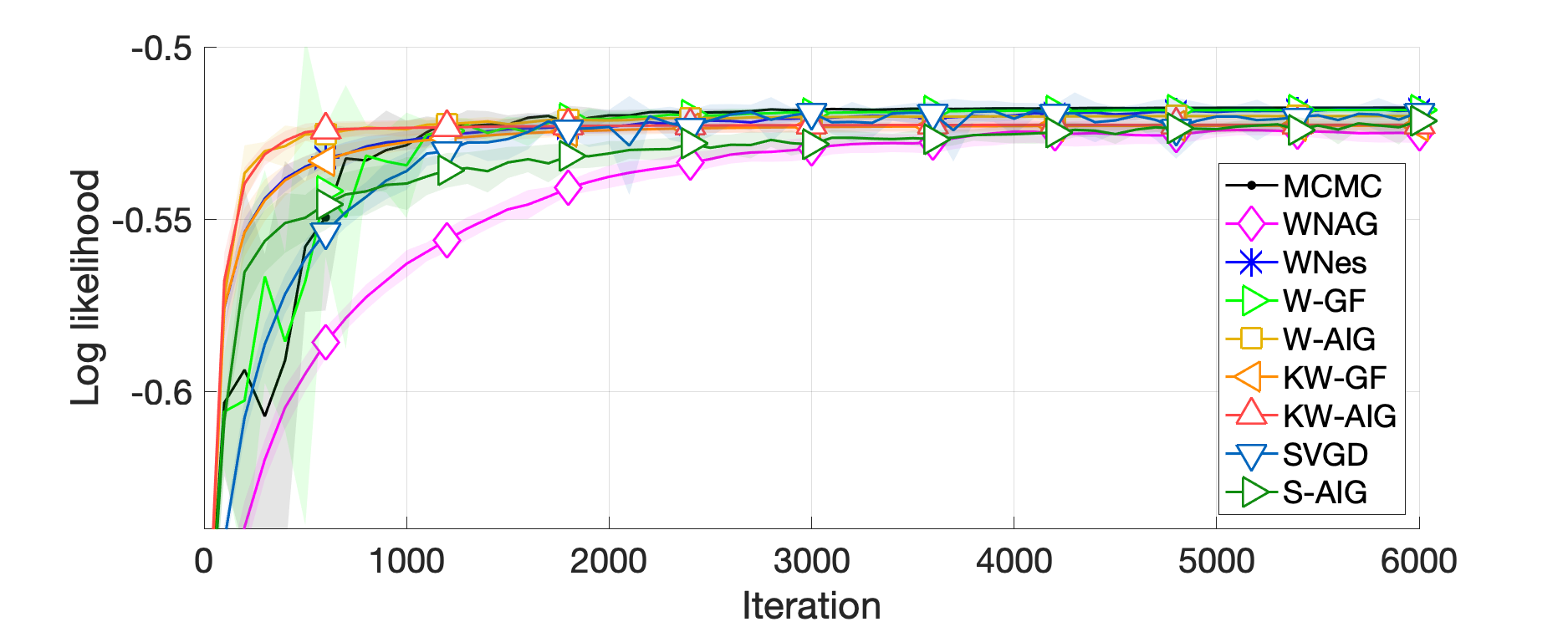}
\end{minipage}
\hfill
\centering
\begin{minipage}[t]{0.48\textwidth}
\centering
\includegraphics[width=\linewidth]{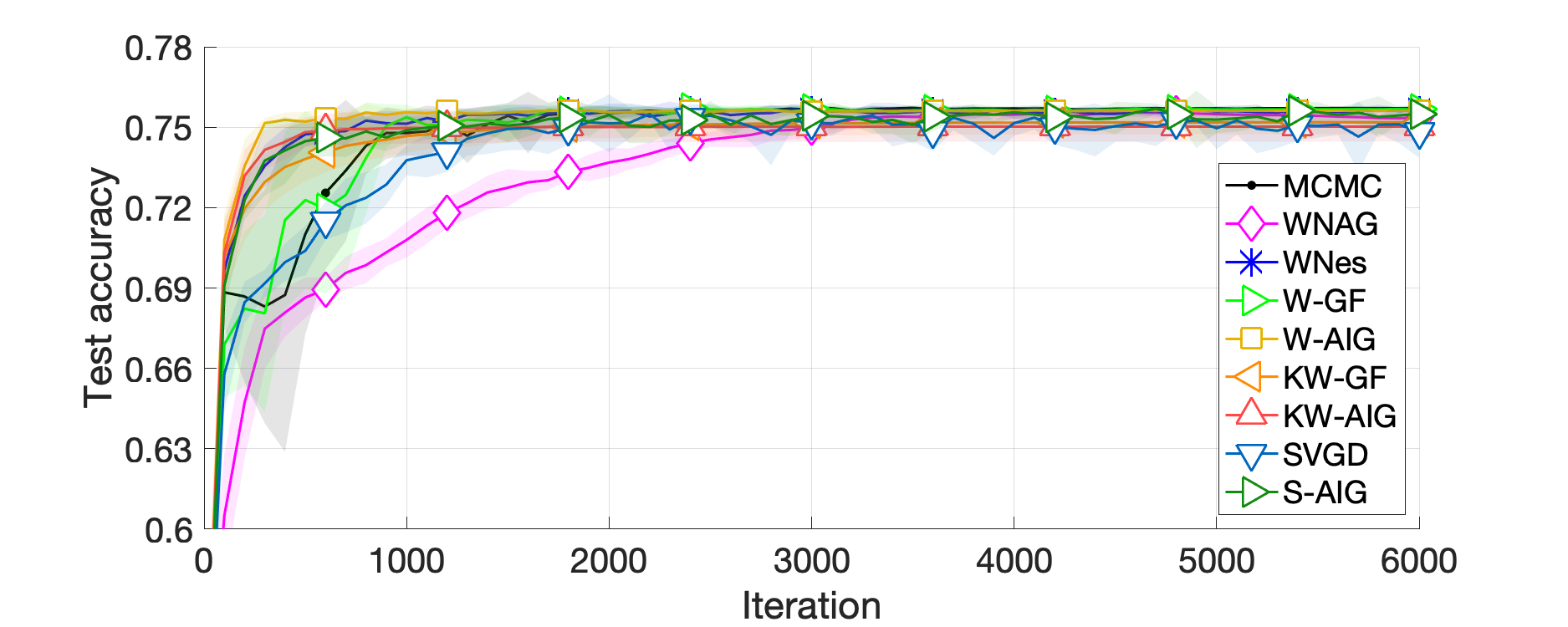}
\end{minipage}
\begin{minipage}[t]{0.48\textwidth}
\centering
\includegraphics[width=\linewidth]{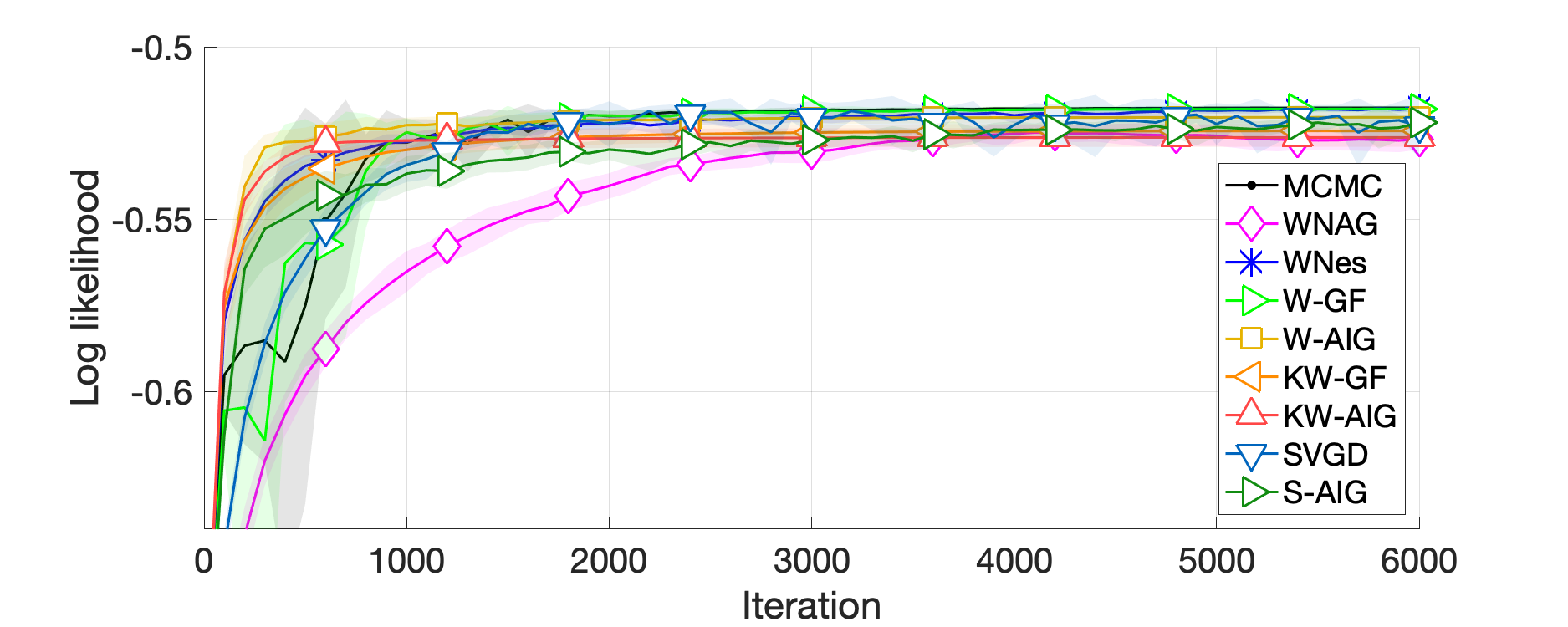}
\end{minipage}
\caption{Results on Bayesian logistic regression, averaged over $10$ independent trials. The shaded areas show the variance. Top: BM; Bottom: MED. Left: Test accuracy; Right: Test log-likelihood.}\label{fig:blr}% over the Covertype dataset
\end{figure}

\subsection{Bayesian neural network}
We apply our proposed method on Bayesian neural network over the UCI datasets\footnote{https://archive.ics.uci.edu/ml/datasets.php}, with the same setting as \cite{svgd_mat}. We compare W-AIG, W-GF and SVGD. For all methods, we use $N=10$ particles. The averaged results over $20$ independent trials are collected in Table \ref{tab:bnn1} and \ref{tab:bnn2}. We observe that on most datasets, W-AIG has better test root-mean-square-error and test log-likelihood than W-GF and SVGD. This indicates that W-AIG may have better generalization than W-GF and SVGD.

\begin{table}[ht]
    \centering
    \small{
    \setlength{\tabcolsep}{0.1mm}{
    \begin{tabular}{|c|c|c|c|}
    \hline
         Dataset&AIG&WGF&SVGD \\\hline
         Boston&$2.871_{\pm3.41e-3}$&$3.077_{\pm5.52e-3}$&{$\mathbf{2.775_{\pm3.78e-3}}$}\\\hline
         Combined&$\mathbf{4.067_{\pm 9.27e-1}}$&$4.077_{\pm 3.85e-4}$&$4.070_{\pm 2.02e-4}$\\\hline
         Concrete&$\mathbf{4.440_{\pm 1.34e-1}}$&$4.883_{\pm 1.93e-1}$&$4.888_{\pm 1.39e-1}$\\\hline
         Kin8nm&$\mathbf{0.094_{\pm 5.56e-6}}$&$0.096_{\pm 3.36e-5}$&$0.095_{\pm 1.32e-5}$\\\hline
         Wine&$0.606_{\pm 1.40e-5}$&$0.614_{\pm 3.48e-4}$&$\mathbf{0.604_{\pm 9.89e-5}}$\\\hline
         Year&$8.876_{\pm 3.71e-4}$&$\mathbf{8.872_{\pm 2.81e-4}}$&$8.873_{\pm 7.19e-4}$\\\hline
    \end{tabular}
    }}
    \caption{Test root-mean-square-error (RMSE).}
    \label{tab:bnn1}
\end{table}

\begin{table}[ht]
    \centering
    \small{
    \setlength{\tabcolsep}{0.1mm}{
    \begin{tabular}{|c|c|c|c|}
    \hline
         Dataset&AIG&WGF&SVGD \\\hline
         Boston&$\mathbf{-2.609_{\pm 1.34e-4}}$&$-2.694_{\pm 2.83e-4}$&$-2.611_{\pm 1.36e-4}$\\\hline
         Combined&$\mathbf{-2.822_{\pm 5.72e-3}}$&$ -2.825_{\pm 2.36e-5}$&$-2.823_{\pm 1.24e-5}$\\\hline
         Concrete&$\mathbf{-2.884_{\pm 8.84e-3}}$&$-2.971_{\pm 8.93e-3}$&$-2.978_{\pm 6.05e-3}$\\\hline
         Kin8nm&$\mathbf{0.951_{\pm 6.43e-4}}$&$0.923_{\pm 3.37e-3}$&$0.932_{\pm 1.43e-3}$\\\hline
         Wine&$-0.961_{\pm 1.28e-4}$&$-0.961_{\pm 3.17e-4}$&$\mathbf{-0.952_{\pm 9.89e-5}}$\\\hline
         Year&$-3.654_{\pm 1.00e-5}$&$-3.655_{\pm 7.82e-6}$&$\mathbf{-3.652_{\pm 1.28e-5}}$\\\hline
    \end{tabular}
    }}
    \caption{Test log-likelihood.}
    \label{tab:bnn2}
\end{table}

\section{Conclusion}
%{A summary of this paper. Our contribution.} 
In summary, we propose the framework of AIG flows by damping Hamiltonian flows with respect to certain information metrics in probability space.%AIG flows have been carefully studied in Gaussian families. 
%Examples of AIG flows in Gaussian families are provided.
In theory, we establish the convergence rate of F-AIG and W-AIG flows. In algorithm, we propose particle formulations for W-AIG flow, KW-AIG and S-AIG flows. Numerically, we propose discrete-time algorithms and an adaptive restart technique to overcome numerical stiffness of AIG flows. 
To efficiently approximate $\nabla \log \rho_k(x)$, we introduce a novel kernel selection method by learning from Brownian-motion samples. Numerical experiments verify the acceleration effect of AIG flows and the strength of adaptive restart. %{Future direction. Positive definite Matrix optimization. Fisher-Rao and Wasserstein AIG in Gaussian. Natural information acceleration method. General information metric. } 

In future works, we intend to systematically explain the stiffness of AIG flows and effects of adaptive restart. We shall apply our results to general information metrics, especially for generalized Wasserstein metrics. We expect to study the related sampling efficient optimization methods and discrete-time algorithms. We also plan to incorporate Hessian operators in probability space \cite{infso} in designing higher-order accelerated algorithms. We shall compare these information metrics induced methods in terms of both computational complexity and sampling efficiency. We expect that the proposed accelerated algorithms will be useful in scientific computing of Bayesian inverse problems. 

\bibliography{WIG}

\begin{thebibliography}{}

\bibitem[Amari et~al., 1987]{dgisi}
Amari, S., Barndorff-Nielsen, O.~E., Kass, R.~E., Lauritzen, S.~L., and Rao, C.
  (1987).
\newblock Differential geometry in statistical inference.
\newblock IMS.

\bibitem[Amari, 1998]{ngwei}
Amari, S.-I. (1998).
\newblock Natural gradient works efficiently in learning.
\newblock {\em Neural computation}, 10(2):251--276.

\bibitem[Amari, 2016]{igaia}
Amari, S.-i. (2016).
\newblock {\em Information geometry and its applications}, volume 194.
\newblock Springer.

\bibitem[Bernton, 2018]{lmcaj}
Bernton, E. (2018).
\newblock Langevin {M}onte {C}arlo and {JKO} splitting.
\newblock In {\em Conference On Learning Theory}, pages 1777--1798.

\bibitem[Carrillo et~al., 2019a]{coeiw}
Carrillo, J.~A., Choi, Y.-P., and Tse, O. (2019a).
\newblock Convergence to equilibrium in {W}asserstein distance for damped
  {E}uler equations with interaction forces.
\newblock {\em Communications in Mathematical Physics}, 365(1):329--361.

\bibitem[Carrillo et~al., 2019b]{abmfd}
Carrillo, J.~A., Craig, K., and Patacchini, F.~S. (2019b).
\newblock A blob method for diffusion.
\newblock {\em Calculus of Variations and Partial Differential Equations},
  58(2):53.

\bibitem[Cheng et~al., 2017]{ulman}
Cheng, X., Chatterji, N.~S., Bartlett, P.~L., and Jordan, M.~I. (2017).
\newblock Underdamped {L}angevin {MCMC}: A non-asymptotic analysis.
\newblock {\em arXiv preprint arXiv:1707.03663}.

\bibitem[Chow et~al., 2019]{whf}
Chow, S.-N., Li, W., and Zhou, H. (2019).
\newblock Wasserstein hamiltonian flows.
\newblock {\em arXiv preprint arXiv:1903.01088}.

\bibitem[Deco and Obradovic, 2012]{aitat}
Deco, G. and Obradovic, D. (2012).
\newblock {\em An information-theoretic approach to neural computing}.
\newblock Springer Science \& Business Media.

\bibitem[Duncan et~al., 2019]{otgos}
Duncan, A., N{\"u}sken, N., and Szpruch, L. (2019).
\newblock On the geometry of stein variational gradient descent.
\newblock {\em arXiv preprint arXiv:1912.00894}.

\bibitem[Garbuno-Inigo et~al., 2019]{ild}
Garbuno-Inigo, A., Hoffmann, F., Li, W., and Stuart, A.~M. (2019).
\newblock Interacting {L}angevin diffusions: {G}radient structure and ensemble
  {K}alman sampler.
\newblock {\em arXiv preprint arXiv:1903.08866}.

\bibitem[Kingma and Ba, 2014]{adam}
Kingma, D.~P. and Ba, J. (2014).
\newblock Adam: A method for stochastic optimization.
\newblock {\em arXiv preprint arXiv:1412.6980}.

\bibitem[Lafferty, 1988]{tdmac}
Lafferty, J.~D. (1988).
\newblock The density manifold and configuration space quantization.
\newblock {\em Transactions of the American Mathematical Society},
  305(2):699--741.

\bibitem[Liu et~al., 2019]{uaapb}
Liu, C., Zhuo, J., Cheng, P., Zhang, R., and Zhu, J. (2019).
\newblock Understanding and accelerating particle-based variational inference.
\newblock In {\em International Conference on Machine Learning}, pages
  4082--4092.

\bibitem[Liu et~al., 2018]{afomo}
Liu, C., Zhuo, J., Cheng, P., Zhang, R., Zhu, J., and Carin, L. (2018).
\newblock Accelerated first-order methods on the {W}asserstein space for
  {B}ayesian inference.
\newblock {\em arXiv preprint arXiv:1807.01750}.

\bibitem[Liu, 2017]{Liu2017_steina}
Liu, Q. (2017).
\newblock Stein variational gradient descent as gradient flow.
\newblock In Guyon, I., Luxburg, U.~V., Bengio, S., Wallach, H., Fergus, R.,
  Vishwanathan, S., and Garnett, R., editors, {\em Advances in Neural
  Information Processing Systems 30}, pages 3115--3123. Curran Associates, Inc.

\bibitem[Liu and Wang, 2016]{SVGD}
Liu, Q. and Wang, D. (2016).
\newblock Stein variational gradient descent: A general purpose bayesian
  inference algorithm.
\newblock In {\em Advances in neural information processing systems}, pages
  2378--2386.

\bibitem[Liu et~al., 2017]{afomf}
Liu, Y., Shang, F., Cheng, J., Cheng, H., and Jiao, L. (2017).
\newblock Accelerated first-order methods for geodesically convex optimization
  on {R}iemannian manifolds.
\newblock In {\em Advances in Neural Information Processing Systems}, pages
  4868--4877.

\bibitem[Ma et~al., 2019]{itaao}
Ma, Y.-A., Chatterji, N., Cheng, X., Flammarion, N., Bartlett, P., and Jordan,
  M.~I. (2019).
\newblock Is there an analog of {N}esterov acceleration for {MCMC}?
\newblock {\em arXiv preprint arXiv:1902.00996}.

\bibitem[Maddison et~al., 2018]{hdm}
Maddison, C.~J., Paulin, D., Teh, Y.~W., O'Donoghue, B., and Doucet, A. (2018).
\newblock Hamiltonian descent methods.
\newblock {\em arXiv preprint arXiv:1809.05042}.

\bibitem[Malago et~al., 2013]{ngfmm}
Malago, L., Matteucci, M., and Pistone, G. (2013).
\newblock Natural gradient, fitness modelling and model selection: {A} unifying
  perspective.
\newblock In {\em 2013 IEEE Congress on Evolutionary Computation}, pages
  486--493. IEEE.

\bibitem[Malag{\`o} et~al., 2018]{wrgop}
Malag{\`o}, L., Montrucchio, L., and Pistone, G. (2018).
\newblock Wasserstein {R}iemannian geometry of positive definite matrices.
\newblock {\em arXiv preprint arXiv:1801.09269}.

\bibitem[Martens and Grosse, 2015]{onnwk}
Martens, J. and Grosse, R. (2015).
\newblock Optimizing neural networks with kronecker-factored approximate
  curvature.
\newblock In {\em International conference on machine learning}, pages
  2408--2417.

\bibitem[Modin, 2016]{gomds}
Modin, K. (2016).
\newblock Geometry of matrix decompositions seen through optimal transport and
  information geometry.
\newblock {\em arXiv preprint arXiv:1601.01875}.

\bibitem[Nesterov, 1983]{nesterov}
Nesterov, Y. (1983).
\newblock A method of solving a convex programming problem with convergence
  rate {$O(1/k^2)$}.
\newblock {\em Soviet Mathematics Doklady}, 27(2):372--376.

\bibitem[Otto, 2001]{tgode}
Otto, F. (2001).
\newblock The geometry of dissipative evolution equations: the porous medium
  equation.
\newblock {\em Communications in Partial Differential Equations},
  26(1-2):101--174.

\bibitem[O’donoghue and Candes, 2015]{arfag}
O’donoghue, B. and Candes, E. (2015).
\newblock Adaptive restart for accelerated gradient schemes.
\newblock {\em Foundations of computational mathematics}, 15(3):715--732.

\bibitem[Principe et~al., 2000]{itl}
Principe, J.~C., Xu, D., Fisher, J., and Haykin, S. (2000).
\newblock Information theoretic learning.
\newblock {\em Unsupervised adaptive filtering}, 1:265--319.

\bibitem[Rezende and Mohamed, 2015]{viwnf}
Rezende, D.~J. and Mohamed, S. (2015).
\newblock Variational inference with normalizing flows.
\newblock {\em arXiv preprint arXiv:1505.05770}.

\bibitem[Saha, 2019]{agffm}
Saha, A. (2019).
\newblock {\em A Geometric Framework for Modeling and Inference using the
  Nonparametric Fisher--Rao metric}.
\newblock PhD thesis, The Ohio State University.

\bibitem[Singh, 1977]{ioskn}
Singh, R.~S. (1977).
\newblock Improvement on some known nonparametric uniformly consistent
  estimators of derivatives of a density.
\newblock {\em The Annals of Statistics}, pages 394--399.

\bibitem[Srivastava and Klassen, 2016]{fasda}
Srivastava, A. and Klassen, E.~P. (2016).
\newblock {\em Functional and shape data analysis}, volume 475.
\newblock Springer.

\bibitem[Stuart, 2010]{ivabp}
Stuart, A.~M. (2010).
\newblock Inverse problems: a {B}ayesian perspective.
\newblock {\em Acta numerica}, 19:451--559.

\bibitem[Su et~al., 2016]{adefm}
Su, W., Boyd., S., and Cand{\'e}s, E.~J. (2016).
\newblock A differential equation for modeling {N}esterov's accelerated
  gradient method: Theory and insights.
\newblock {\em Journal of Machine Learning Research}.

\bibitem[Taghvaei and Mehta, 2019]{affpd}
Taghvaei, A. and Mehta, P.~G. (2019).
\newblock Accelerated flow for probability distributions.
\newblock {\em arXiv preprint arXiv:1901.03317}.

\bibitem[Takatsu, 2008]{owgot}
Takatsu, A. (2008).
\newblock On {W}asserstein geometry of the space of {G}aussian measures.
\newblock {\em arXiv preprint arXiv:0801.2250}.

\bibitem[Villani, 2003]{tiot}
Villani, C. (2003).
\newblock {\em Topics in optimal transportation}.
\newblock American Mathematical Soc.

\bibitem[Wang et~al., 2019a]{svgd_mat}
Wang, D., Tang, Z., Bajaj, C., and Liu, Q. (2019a).
\newblock Stein variational gradient descent with matrix-valued kernels.
\newblock In {\em Advances in neural information processing systems}, pages
  7834--7844.

\bibitem[Wang et~al., 2019b]{tsdcm}
Wang, Y., Jia, Z., and Wen, Z. (2019b).
\newblock The {S}earch direction {C}orrection makes first-order methods faster.
\newblock {\em arXiv preprint arXiv:1905.06507}.

\bibitem[Wang and Li, 2020]{infso}
Wang, Y. and Li, W. (2020).
\newblock Information newton's flow: second-order optimization method in
  probability space.
\newblock {\em arXiv preprint arXiv:2001.04341}.

\bibitem[Wibisono, 2019]{plarc}
Wibisono, A. (2019).
\newblock Proximal {L}angevin {A}lgorithm: Rapid convergence under
  isoperimetry.
\newblock {\em arXiv preprint arXiv:1911.01469}.

\bibitem[Wibisono et~al., 2016]{avpoa}
Wibisono, A., Wilson, A.~C., and Jordan, M.~I. (2016).
\newblock A variational perspective on accelerated methods in optimization.
\newblock {\em proceedings of the National Academy of Sciences},
  113(47):E7351--E7358.

\bibitem[Wilson et~al., 2016]{wilson2016lyapunov}
Wilson, A.~C., Recht, B., and Jordan, M.~I. (2016).
\newblock A lyapunov analysis of momentum methods in optimization.
\newblock {\em arXiv preprint arXiv:1611.02635}.

\bibitem[Zhang and Sra, 2018]{tragm}
Zhang, H. and Sra, S. (2018).
\newblock Towards {R}iemannian accelerated gradient methods.
\newblock {\em arXiv preprint arXiv:1806.02812}.

\end{thebibliography}
\bibliographystyle{apalike}

\appendix

In this appendix, we formulate detailed derivations of examples and proofs of propositions. We also design particle implementations of KW-AIG flows, S-AIG flows and provide detailed implementations of experiments. 
\section{Euler-Lagrange equation, Hamiltonian flows and AIG flows}
In this section, we review and derive Euler-Lagrange equation, 
Hamiltonian flows and Euler-Lagrange formulation of AIG flows in probability space.
\subsection{Derivation of the Euler-Lagrange equation}
In this subsection, we derive the Euler-Lagrange equation in probability space. For a given metric $g_\rho$ in probability space, we can define a Lagrangian by
$$\mcL(\rho_t,\p_t\rho_t)=\frac{1}{2}g_{\rho_t}(\p_t\rho_t,\p_t\rho_t)-E(\rho_t).$$
\begin{proposition}
The Euler-Lagrange equation for this Lagrangian follows
\begin{equation*}
\p_t\lp\frac{\delta \mcL}{\delta (\p_t \rho_t)}\rp=\frac{\delta \mcL}{\delta \rho_t}+C(t),
\end{equation*}
where $C(t)$ is a spatially-constant function. 
\end{proposition}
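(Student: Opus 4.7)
The plan is to derive the Euler-Lagrange equation in the standard way, by computing the first variation of the action functional and then accounting carefully for the mass-preservation constraint on admissible variations. Consider the action
$$
S[\rho] = \int_{t_0}^{t_1} \mcL(\rho_t,\p_t\rho_t)\,dt,
$$
and a one-parameter family of perturbations $\rho_t^\epsilon = \rho_t + \epsilon\eta_t$, where $\eta_t$ is smooth, vanishes at the temporal endpoints $t=t_0,t_1$, and satisfies $\int_\Omega \eta_t\,dx = 0$ for each $t$, so that $\rho_t^\epsilon$ remains in $\mcP(\Omega)$ to first order. The admissible variations $\eta_t$ are thus precisely the elements of $T_{\rho_t}\mcP(\Omega)$.

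Next I would compute $\tfrac{d}{d\epsilon} S[\rho^\epsilon]\big|_{\epsilon=0}$ using the chain rule together with the $L^2$ first variations $\tfrac{\delta \mcL}{\delta \rho_t}$ and $\tfrac{\delta \mcL}{\delta(\p_t\rho_t)}$:
$$
\frac{d}{d\epsilon} S[\rho^\epsilon]\Big|_{\epsilon=0}
= \int_{t_0}^{t_1} \int_\Omega \lp \frac{\delta \mcL}{\delta \rho_t}\,\eta_t + \frac{\delta \mcL}{\delta(\p_t\rho_t)}\,\p_t\eta_t\rp dx\,dt.
$$
Integration by parts in $t$, using the endpoint conditions on $\eta_t$, converts this to
$$
\int_{t_0}^{t_1} \int_\Omega \lp \frac{\delta \mcL}{\delta \rho_t} - \p_t\!\lp\frac{\delta \mcL}{\delta(\p_t\rho_t)}\rp\rp \eta_t \,dx\,dt.
$$
Stationarity requires this to vanish for every admissible $\eta_t$.

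The key subtlety, and the main point of the proposition, is that $\eta_t$ is not arbitrary: it must satisfy the linear constraint $\int_\Omega \eta_t\,dx = 0$. Therefore the vanishing of the above integral against every such $\eta_t$ does not force the bracketed integrand to be identically zero, but only to be orthogonal in $L^2_x$ to the zero-mean subspace. The orthogonal complement of $\{\eta : \int\eta\,dx=0\}$ in $L^2(\Omega)$ is spanned by constants in $x$. Hence there must exist a function $C(t)$, depending only on $t$, such that
$$
\frac{\delta \mcL}{\delta \rho_t} - \p_t\!\lp\frac{\delta \mcL}{\delta(\p_t\rho_t)}\rp = -C(t),
$$
which rearranges to the claimed identity. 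The only nontrivial step is recognizing that $C(t)$ arises as a Lagrange multiplier for the pointwise-in-$t$ mass constraint, which is exactly why the equation holds only modulo a spatially-constant function rather than pointwise; the rest of the argument is a routine calculus-of-variations computation.
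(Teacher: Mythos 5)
Your argument is correct and follows essentially the same route as the paper's own proof: vary the action with a mean-zero perturbation vanishing at the temporal endpoints, integrate by parts in $t$, and observe that orthogonality to all zero-mean test functions only pins down the integrand up to a spatially constant function $C(t)$. If anything, your explicit identification of the constants as the $L^2$-orthogonal complement of the zero-mean subspace (i.e.\ $C(t)$ as a Lagrange multiplier for the mass constraint) spells out a step the paper leaves implicit.
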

\begin{proof}
For a fixed $T>0$ and two given densities $\rho_0,\rho_T$, consider the variational problem
{\small
$$
I(\rho_t)=\inf\limits_{\rho_t}\left\{\left.\int_0^T\mcL(\rho_t,\p_t\rho_t)dt\right |\rho_t|_{t=0}=\rho_0,\rho_t|_{t=T}=\rho_T\right\}.
$$
}

Let $h_t\in\mcF(\Omega)$ be the smooth perturbation function that satisfies  $\int   h_t dx=0, t\in\bb{0,T}$ and $h_t|_{t=0}=h_t|_{t=T}\equiv 0$. Denote $\rho_t^\epsilon = \rho_t+\epsilon h_t$. 
Note that we have the Taylor expansion
$$
\begin{aligned}
I(\rho_t^\epsilon)=&\int_0^T \mcL(\rho_t,\p_t\rho_t)dt\\
&+\epsilon \int_0^T\int \lp\frac{\delta \mcL}{\delta\rho_t}h_t+\frac{\delta \mcL}{\delta (\p_t \rho_t)}\p_th_t\rp dxdt+o(\epsilon).
\end{aligned}
$$
From $\left.\frac{dI(\rho_t^\epsilon)}{d\epsilon}\right|_{\epsilon=0}=0$, it follows that
$$
\int_0^T\int \lp\frac{\delta \mcL}{\delta\rho_t}h_t+\frac{\delta \mcL}{\delta (\p_t \rho_t)}\p_th_t\rp dxdt=0.
$$
Note that $h_t|_{t=0}=h_t|_{t=T}\equiv 0$. Perform integration by parts w.r.t. $t$ yields
$$
\int_0^T\int \lp\frac{\delta \mcL}{\delta\rho_t}-\p_t\frac{\delta \mcL}{\delta (\p_t \rho_t)}\rp h_t dxdt=0.
$$
Because $\int  h_t dx=0$, the Euler-Lagrange equation holds with a spatially constant function $C(t)$. 
\end{proof}

\subsection{Derivation of Hamiltonian flow}\label{app:hamilton}
In this subsection, we derive the Hamiltonian flow in the probability space. Denote $\Phi_t = \delta L/\delta (\p_t \rho_t)=G(\rho_t)\p_t\rho_t$. Then, the Euler-Lagrange equation can be formulated as a system of $(\rho_t,\Phi_t)$, i.e.,
\begin{equation*}
\left\{
\begin{aligned}
&\p_t\rho_t-G(\rho_t)^{-1}\Phi_t=0,\\
&\p_t\Phi_t+\frac{1}{2}\frac{\delta}{\delta \rho_t}\lp \int  \Phi_tG(\rho_t)^{-1}\Phi_tdx\rp+\frac{\delta E}{\delta \rho_t}=0.
\end{aligned}
\right.
\end{equation*}

First, we give a useful identity. Given a metric tensor $G(\rho):T_\rho\mcP(\Omega)\to T_\rho^*\mcP(\Omega)$, we have
\begin{equation}\label{equ:id1}
\begin{aligned}
&\int   \sigma_1 G(\rho)\sigma_2 dx= \int   G(\rho)\sigma_1\sigma_2dx\\
= &\int   \Phi_1G(\rho)^{-1}\Phi_2 dx= \int   G(\rho)^{-1}\Phi_1\Phi_2 dx.
\end{aligned}
\end{equation}
Here $\Phi_1 = G(\rho)^{-1}\sigma_1$ and $\Phi_2 = G(\rho)^{-1}\sigma_2$. We then check that 
{\small
\begin{equation}\label{equ:check1}
\frac{\delta}{\delta \rho_t}\lp\int  \p_t\rho_t G(\rho_t) \p_t\rho_tdx\rp =- \frac{\delta}{\delta \rho_t}\lp \int  \Phi_tG(\rho_t)^{-1}\Phi_tdx\rp.
\end{equation}}
Let $\tilde\rho_t = \rho_t+\epsilon h$, where $h\in T_{\rho_t}\mcP(\Omega)$. For all $\sigma\in T_{\rho_t}\mcP$, it follows
\begin{equation*}
G(\rho_t+\epsilon h )^{-1}G(\rho_t+\epsilon h ) \sigma = \sigma.
\end{equation*}
The first-order derivative w.r.t. $\epsilon$ of the left hand side shall be $0$, i.e.,
\begin{equation*}
\lp\frac{\p G(\rho_t)^{-1}}{\p \rho_t} \cdot h\rp G(\rho_t) \sigma +G(\rho_t) ^{-1}\lp\frac{\p G(\rho_t)}{\p \rho_t}\cdot h\rp \sigma = 0.
\end{equation*}
Because $\p_t\rho_t=G(\rho)^{-1}\Phi_t$, applying \eqref{equ:id1} yields
\begin{equation}\label{equ:a1_1}
\begin{aligned}
&\int  \p_t \rho_t \lp\frac{\p G(\rho_t)}{\p \rho_t}\cdot h\rp \p_t \rho_tdx=\int  \Phi_t G(\rho_t)^{-1}\lp\frac{\p G(\rho_t)}{\p \rho_t}\cdot h\rp \p_t \rho_tdx\\
=& -\int  \Phi_t \lp\frac{\p G(\rho_t)^{-1}}{\p \rho_t}\cdot h\rp G(\rho_t)\p_t \rho_tdx=-\int  \Phi_t\lp\frac{\p G(\rho_t)^{-1}}{\p \rho_t}\cdot h\rp\Phi_tdx.
\end{aligned}
\end{equation}
Based on basic calculations, we can compute that
\begin{equation}\label{equ:a1_2}
\begin{aligned}
&\int  \p_t\rho_t G(\tilde\rho_t) \p_t\rho_tdx-\int  \p_t\rho_t G(\rho_t)\p_t\rho_tdx=\epsilon\int  \p_t \rho_t \lp\frac{\p G(\rho_t)}{\p \rho_t}\cdot h\rp \p_t \rho_tdx+o(\epsilon),
\end{aligned}
\end{equation}
\begin{equation}\label{equ:a1_3}
\begin{aligned}
&-\int  \Phi_tG(\tilde\rho_t)^{-1}\Phi_tdx+\int  \Phi_tG(\rho_t)^{-1}\Phi_tdx=-\epsilon \int  \Phi_t\lp\frac{\p G(\rho_t)^{-1}}{\p \rho_t}\cdot h\rp\Phi_tdx+o(\epsilon).
\end{aligned}
\end{equation}
Combining \eqref{equ:a1_1}, \eqref{equ:a1_2} and \eqref{equ:a1_3} yields \eqref{equ:check1}.
Hence, the Euler-Lagrange equation is equivalent to
\begin{equation*}
\begin{aligned}
&\p_t\Phi_t=\frac{1}{2}\frac{\delta}{\delta \rho_t}\lp\int  \p_t\rho_t G(\rho_t) \p_t\rho_tdx\rp -\frac{\delta E}{\delta \rho_t}=-\frac{1}{2}\frac{\delta}{\delta \rho_t}\lp \int  \Phi_tG(\rho_t)^{-1}\Phi_tdx\rp-\frac{\delta E}{\delta \rho_t}.
\end{aligned}
\end{equation*}
This equation combining with $\p_t\rho_t=G(\rho)^{-1}\Phi_t$ recovers the Hamiltonian flow. In short, the Euler-Lagrange equation is from the primal coordinates $(\rho_t,\p_t\rho_t)$ and the Hamiltonian flow is from the dual coordinates $(\rho_t,\Phi_t)$. Similar interpretations can be found in \citep{whf}. 

\subsection{The Euler-Lagrangian formulation of AIG flows}
We can formulate the AIG flow as a second-order equation of $\rho_t$,
\begin{equation*}
    \frac{D^2}{D t^2}\rho_t+\alpha_t \p_t\rho_t+ G(\rho_t)^{-1}\frac{\delta E}{\delta \rho_t}=0.
\end{equation*}
Here $D^2/D t^2$ is the covariant derivative w.r.t. metric $G(\rho)$. We can also explicitly write $\frac{D^2}{D t^2}\rho_t$ as
\begin{equation*}
\begin{aligned}
\frac{D^2}{D t^2}\rho_t = &\p_{tt}\rho_t-(\p_t G(\rho_t)^{-1})\p_t\rho_t+\frac{1}{2}G(\rho_t)^{-1}\frac{\delta}{\delta \rho_t}\lp \int  \partial_t\rho_t G(\rho_t)\partial_t\rho_t dx\rp.
\end{aligned}
\end{equation*}

\section{Derivation of examples in Section 3}
In this section, we present examples of gradient flows, Hamiltonian flows and derive particle dynamics examples in Section 3. 
%We also introduce the Wasserstein metric and formulate W-AIG flows in Gaussian families. The W-AIG flow in Gaussian families becomes an ODE system, which corresponds to updates of covariance matrices.
\subsection{Examples of gradient flows}
We first present several examples of gradient flows w.r.t. different metrics. 
\begin{example}[Fisher-Rao gradient flow]
\begin{equation*}
\begin{split}
\p_t\rho_t=&-G^F(\rho_t)^{-1}\frac{\delta E}{\delta\rho_t}=-\rho_t\lp \frac{\delta E}{\delta \rho_t}-\int    \frac{\delta E}{\delta \rho_t}\rho_tdy\rp.
\end{split}
\end{equation*}
\end{example}

\begin{example}[Wasserstein gradient flow]
%The Wasserstein gradient flow writes
\begin{equation*}
\begin{split}
\p_t\rho_t =&-G^W(\rho_t)^{-1}\frac{\delta E}{\delta\rho_t}=\nabla \cdot \lp\rho_t\nabla \frac{\delta E}{\delta \rho_t}\rp.
\end{split}
\end{equation*}
\end{example}

\begin{example}[Kalman-Wasserstein gradient flow]
%The Wasserstein gradient flow writes
\begin{equation*}
\begin{split}
\p_t\rho_t =&-G^{KW}(\rho_t)^{-1}\frac{\delta E}{\delta\rho_t}=\nabla\cdot\pp{\rho_t C^\lambda(\rho_t)\nabla\pp{\frac{\delta E}{\delta\rho_t}}}.
\end{split}
\end{equation*}
\end{example}

\begin{example}[Stein gradient flow]
%The Wasserstein gradient flow writes
\begin{equation*}
\begin{split}
\p_t\rho_t =&-G^S(\rho_t)^{-1}\frac{\delta E}{\delta\rho_t}=\nabla_x\cdot\pp{\rho_t(x) \int k(x,y) \rho_t(y) \nabla_y\pp{\frac{\delta E}{\delta\rho_t}} dy}.
\end{split}
\end{equation*}
\end{example}

\subsection{Examples of Hamiltonian flows}
We next present several examples of Hamiltonian flows w.r.t. different metrics. The derivations simply follow from the definition of the given information metric and the formulations given in Appendix \ref{app:hamilton}.
\begin{example}[Fisher-Rao Hamiltonian flow]
The Fisher-Rao Hamiltonian flow follows
\begin{equation*}
\left\{
\begin{aligned}
&\p_t\rho_t-\rho_t\lp \Phi_t-\mbE_{\rho_t}[\Phi_t]\rp=0,\\
&\p_t\Phi_t+\frac{1}{2}\Phi_t^2-\mbE_{\rho_t}[\Phi_t]\Phi_t+\frac{\delta E}{\delta \rho_t}=0,
\end{aligned}
\right.
\end{equation*}
where the corresponding Hamiltonian is 
$$
\mcH^F(\rho_t,\Phi_t)=\frac{1}{2}\lp \mbE_{\rho_t}[\Phi_t^2]-\pp{\mbE_{\rho_t}[\Phi_t]}^2\rp+E(\rho_t). 
$$
The derivation comes from that
\begin{equation*}
\begin{aligned}
    &\frac{\delta }{\delta \rho_t} \int \Phi_t G^F(\rho_t) \Phi_t dx\\
    =& \frac{\delta }{\delta \rho_t}\lp \mbE_{\rho_t}[\Phi_t^2]-\pp{\mbE_{\rho_t}[\Phi_t]}^2\rp\\
    =&\Phi_t^2-2\mbE_{\rho_t}[\Phi_t]\Phi_t.
\end{aligned}
\end{equation*}
\end{example}

\begin{example}[Wasserstein Hamiltonian flow]
The Wasserstein Hamiltonian flow writes
\begin{equation*}
\left\{
\begin{aligned}
&\p_t\rho_t+\nabla\cdot(\rho_t\nabla \Phi_t)=0,\\
&\p_t\Phi_t+\frac{1}{2}\|\nabla \Phi_t\|^2+\frac{\delta E}{\delta \rho_t}=0,
\end{aligned}
\right.
\end{equation*}
where the corresponding Hamiltonian is 
$$
\mcH^W(\rho_t,\Phi_t)=\frac{1}{2}\int   \|\nabla \Phi_t\|^2\rho_tdx+E(\rho_t). $$
It is identical to the Wasserstein Hamiltonian flow introduced by \cite{whf}. The derivation simply comes from that
\begin{equation*}
\begin{aligned}
    \frac{\delta }{\delta \rho_t} \int \Phi_t G^W(\rho_t) \Phi_t dx= \frac{\delta }{\delta \rho_t}\lp \int \|\nabla \Phi_t\|_2^2\rho_t dx\rp=\|\nabla \Phi_t\|^2.
\end{aligned}
\end{equation*}
\end{example}

\begin{example}[Kalman-Wasserstein Hamiltonian flow]
The Kalman-Wasserstein Hamiltonian flow writes
\begin{equation*}
\left\{\begin{aligned}
&\p_t\rho_t+\nabla \cdot(\rho_tC^\lambda(\rho_t)\nabla \Phi_t)=0,\\
&\p_t \Phi_t+ \frac{1}{2}\lp(x-m(\rho_t))^TB_{\rho_t}(\Phi_t)(x-m(\rho_t))+\nabla\Phi_t(x)^TC^\lambda(\rho_t)\nabla \Phi_t(x)\rp+\frac{\delta E}{\delta \rho_t}=0,\\
\end{aligned}\right.
\end{equation*}
where the corresponding Hamiltonian is 
$$
\mcH^{KW}(\rho_t,\Phi_t)=\frac{1}{2}\int \nabla \Phi_t^T C^\lambda(\rho_t)\nabla \Phi_t \rho_t  dx+E(\rho_t). $$
The derivation comes from that
\begin{equation*}
    \begin{aligned}
    &\frac{\delta }{\delta \rho_t} \int \Phi_t G^{KW}(\rho_t) \Phi_t dx\\
    =&\frac{\delta }{\delta \rho_t}\pp{\int \nabla \Phi_t^T C^\lambda(\rho_t)\nabla \Phi_t \rho_t  dx}\\
    =&x-m(\rho_t))^TB_{\rho_t}(\Phi_t)(x-m(\rho_t))+\nabla\Phi_t(x)^TC^\lambda(\rho_t)\nabla \Phi_t(x).
    \end{aligned}
\end{equation*}
Here we recall that $B_{\rho_t}(\Phi_t)=\int \nabla\Phi_t\nabla \Phi_t^T\rho_t dx$.
\end{example}

\begin{example}[Stein Hamiltonian flow]
The Stein Hamiltonian flow writes
\begin{equation*}
\left\{\begin{aligned}
&\p_t\rho_t(x) = -\nabla_x\cdot\pp{\rho_t(x) \int k(x,y) \rho_t(y) \nabla_y\Phi_t(y) dy},\\
&\p_t\Phi_t(x) = \int   \nabla \Phi_t(x)^T\nabla \Phi_t(y) k(x,y) \rho_t(y)dy -\frac{\delta E}{\delta \rho_t}(x),
\end{aligned}\right.
\end{equation*}
where the corresponding Hamiltonian is 
$$
\mcH(\rho_t,\Phi_t)=\frac{1}{2}\int \int  \nabla \Phi_t(x)^T\nabla \Phi_t(y) k(x,y) \rho_t(x) \rho_t(y)dxdy +E(\rho_t).
$$
The derivation comes from that 
\begin{equation*}
    \begin{aligned}
    &\frac{\delta }{\delta \rho_t} \int \Phi_t G^{S}(\rho_t) \Phi_t dx\\
    =&\frac{\delta }{\delta \rho_t}\pp{\int \int  \nabla \Phi_t(x)^T\nabla \Phi_t(y) k(x,y) \rho_t(x) \rho_t(y)dxdy}\\
    =&2\int   \nabla \Phi_t(x)^T\nabla \Phi_t(y) k(x,y) \rho_t(y)dy.
    \end{aligned}
\end{equation*}
\end{example}

\subsection{The derivation of Example 9 (Wasserstein metric) in Section 3}
We start with an identity. For a twice differentiable $\Phi(x)$, we have
\begin{equation}\label{id1}
\frac{1}{2}\nabla \|\nabla\Phi\|^2 = \nabla^2\Phi\nabla\Phi = (\nabla\Phi\cdot\nabla)\nabla \Phi.
\end{equation}
From (W-AIG), it follows that
\begin{equation}\label{equ:cont}
\p_t\rho_t+\nabla\cdot(\rho_t\nabla\Phi_t) = 0.
\end{equation}
This is the continuity equation of $\rho_t$. Hence, on the particle level, $X_t$ shall follows
$$
dX_t = \nabla \Phi_t(X_t)dt.
$$
Let $V_t=\nabla \Phi_t(X_t)$. Then, by the material derivative in fluid dynamics and (W-AIG), we have
$$
\begin{aligned}
&\frac{dV_t}{dt} = \frac{d}{dt} \nabla \Phi_t(X_t)= (\p_t+\nabla \Phi_t(X_t)\cdot \nabla)\nabla \Phi_t(X_t)dt\\
=&\lp-\alpha_t\nabla \Phi_t(X_t)-\frac{1}{2}\nabla \|\nabla\Phi\|^2-\nabla \frac{\delta E}{\delta \rho_t}\rp dt+(\nabla\Phi\cdot\nabla)\nabla \Phi dt\\
=&-\alpha_t\nabla \Phi_t(X_t)dt-\nabla \frac{\delta E}{\delta \rho_t}(X_t)dt=-\alpha_tV_tdt-\nabla \frac{\delta E}{\delta \rho_t}(X_t)dt.
\end{aligned}
$$

\subsection{The derivations of Example 7 and 10 (Kalman-Wasserstein metric) in Section 3}
We first derive the Hamiltonian flow under the Kalman-Wasserstein metric. We fist show that 
\begin{equation}\label{equ:delta_rho}
    \frac{\delta}{\delta \rho} \left\{\int \Phi G^{KW}(\rho)^{-1}\Phi dx\right\}=(x-m(\rho))^T B_\rho(\Phi) (x-m(\rho))+\nabla\Phi(x)^TC^\lambda(\rho)\nabla \Phi(x).
\end{equation}
From the definition of Kalman-Wasserstein metric, we have
$$
\begin{aligned}
&\int \Phi G^{KW}(\rho)^{-1}\Phi dx=\int \nabla \Phi^T C^\lambda(\rho)\nabla \Phi \rho  dx \\
=&\lra{C^\lambda(\rho), \int \nabla \Phi^T \nabla \Phi \rho  dx}=\lra{C^\lambda(\rho), B_\rho(\Phi)}.
\end{aligned}
$$
Let $\hat \rho=\rho+\epsilon h$, where $h\in T_\rho \mcP(\Omega)$. Then, we can compute that
$$
\begin{aligned}
&\lra{C^\lambda(\rho+\epsilon h), B_{\rho+\epsilon h}(\Phi)}-\lra{C^\lambda(\rho), B_\rho(\Phi)}\\
=&\lra{C^\lambda(\rho+\epsilon h)-C^\lambda(\rho), B_{\rho}(\Phi)}+\lra{C^\lambda(\rho), B_{\rho+\epsilon h}(\Phi)-B_\rho(\Phi)}.
\end{aligned}
$$
We note that
$$
\begin{aligned}
&C^\lambda(\rho+\epsilon h)-C^\lambda(\rho)\\
=&\epsilon \int m(h) (x-m(\rho))^T \rho dx+\epsilon \int (x-m(\rho))m(h)^T \rho dx\\
&+\epsilon  \int (x-m(\rho))(x-m(\rho))^T h dx +O(\epsilon^2)\\
=&\epsilon  \int (x-m(\rho))(x-m(\rho))^T h dx +O(\epsilon^2).
\end{aligned}
$$
$$
B_{\rho+\epsilon h}(\Phi)-B_\rho(\Phi) = \epsilon \int h \nabla\Phi \nabla \Phi^T dx.
$$
Hence, we can derive
$$
\begin{aligned}
&\lra{C^\lambda(\rho+\epsilon h), B_{\rho+\epsilon h}(\Phi)}-\lra{C^\lambda(\rho), B_\rho(\Phi)}\\
=&\epsilon \int h \lra{\nabla\Phi \nabla \Phi^T, C(\rho)}dx+\epsilon \int h \lra{(x-m(\rho))(x-m(\rho))^T,B_\rho(\Phi)} dx+O(\epsilon^2).
\end{aligned}
$$
This proves \eqref{equ:delta_rho}. Hence, the Hamiltonian flow under the Kalman-Wasserstein metric follows
\begin{equation}\label{equ:kw_hf}
\left\{\begin{aligned}
&\p_t\rho_t+\nabla \cdot(\rho_tC^\lambda(\rho_t)\nabla \Phi_t)=0,\\
&\p_t \Phi_t+ \frac{1}{2}\lp(x-m(\rho_t))^TB_{\rho_t}(\Phi_t)(x-m(\rho_t))+\nabla\Phi_t(x)^TC^\lambda(\rho_t)\nabla \Phi_t(x)\rp+\frac{\delta E}{\delta \rho_t}=0.\\
\end{aligned}\right.
\end{equation}
Adding a linear damping term $\alpha_t\Phi_t$ to the second equation in \eqref{equ:kw_hf} yields Example 7.

For Example 10, suppose that $X_t$ follows $\rho_t$ and $V_t = \nabla \Phi_t(X_t)$. Then, we shall have
$$
\frac{d}{dt}X_t = C^\lambda(\rho_t) V_t,\\ 
$$
Note that $V_t = \nabla \Phi_t(X_t)$, we can establish that
$$
\begin{aligned}
&\frac{d}{dt} V_t = (\p_t+(C^\lambda(\rho_t)\nabla\Phi_t\cdot\nabla)\nabla\Phi_t(X_t)\\
=&\nabla\p_t\Phi_t(X_t)+\nabla^2\Phi_t(X_t)C^\lambda(\rho_t)\nabla \Phi_t(X_t).
\end{aligned}
$$
The last inequality can be established as follows. For $i=1,\dots,d$, we have
$$
\begin{aligned}
&(C^\lambda(\rho_t)\nabla\Phi_t\cdot\nabla)\nabla_i\Phi_t(X_t)=\sum_{j=1}^d(C^\lambda(\rho_t)\nabla\Phi_t)_j\nabla_j\nabla_i\Phi_t(X_t)\\
=&\sum_{j=1}^d\nabla_{ij}\Phi_t(X_t)(C^\lambda(\rho_t)\nabla\Phi_t)_j = (\nabla^2\Phi_tC^\lambda(\rho_t)\nabla\Phi_t)_i.
\end{aligned}
$$
According to the chain rule, we also have
$$
\nabla (\nabla\Phi_t(x)^TC^\lambda(\rho_t)\nabla \Phi_t(x)) = 2\nabla^2\Phi_t(x)C^\lambda(\rho_t)\nabla\Phi_t(x)
$$
As a result, we can establish that
\begin{equation}
\begin{aligned}
\frac{d}{dt}V_t =& -\alpha_t V_t-B_{\rho_t}(\Phi_t)(X_t-M(\rho_t))-\nabla\delta_{\rho_t} E \\
=&-\alpha_t V_t-\mbE[V_t V_t^T](X_t-\mbE[X_t])-\nabla\delta_{\rho_t} E.
\end{aligned}
\end{equation}
In summary, the KW-AIG flow in the particle formulation takes the form \eqref{equ:kw_aig_p}

\subsection{The derivations of Example 8 and 11 (Stein metric) in Section 3}
For an objective function $E(\rho)$, the Hamiltonian follows
$$
\mcH(\rho,\Phi)=\frac{1}{2}\int \int  \nabla \Phi(x)^T\nabla \Phi(y) k(x,y) \rho(x) \rho(y)dxdy +E(\rho).
$$
We note that
$$
\begin{aligned}
&\frac{\delta}{\delta \rho }\bb{\frac{1}{2}\int \int  \nabla \Phi(x)^T\nabla \Phi(y) k(x,y) \rho(x) \rho(y)dxdy}(x)\\
=&\int   \nabla \Phi(x)^T\nabla \Phi(y) k(x,y) \rho(y)dy.
\end{aligned}
$$
Hence, the Hamiltonian flow writes
\begin{equation}\label{equ:s_hf}
    \lbb{
&\p_t\rho_t(x) = -\nabla_x\cdot\pp{\rho_t(x) \int k(x,y) \rho_t(y) \nabla_y\Phi_t(y) dy},\\
&\p_t\Phi_t(x) = - \int   \nabla \Phi_t(x)^T\nabla \Phi_t(y) k(x,y) \rho_t(y)dy -\frac{\delta E}{\delta \rho_t}(x).
}
\end{equation}
Adding a linear damping term $\alpha_t\Phi_t$ to the second equation in \eqref{equ:s_hf} yields Example 8.

For Example 11, similarly, suppose that $X_t$ follows $\rho_t$ and $V_t = \nabla \Phi_t(X_t)$. Then, we shall have
$$
\frac{d}{dt} X_t = \int k(X_t,y) \nabla\Phi_t(y)\rho_t(y)dy.
$$
We note that
$$
\begin{aligned}
&\nabla\pp{\int   \nabla \Phi(x)^T\nabla \Phi(y) k(x,y) \rho(y)dy}\\
=&\nabla^2 \Phi(x) \int \nabla \Phi(y) k(x,y) \rho(y)dy+\int \nabla \Phi(x)^T \nabla \Phi (y) \nabla_x k(x,y) \rho(y)dy.
\end{aligned}
$$
Hence, we have
$$
\begin{aligned}
&\frac{d}{dt}  V_t = \p_t\nabla\Phi_t(X_t) +\nabla^2\Phi_t (X_t)\pp{\int k(x,y) \rho_t(y) \nabla_y\Phi_t(y) dy}\\
=&-\alpha_t\ \nabla \Phi_t(X_t)-\int \nabla \Phi_t(X_t)^T \nabla \Phi_t (y) \nabla_x k(X_t,y) \rho(y)dy-\nabla\pp{\frac{\delta E}{\delta \rho_t}}(X_t)\\
=&-\alpha_tV_t-\int V_t^T\nabla \Phi_t (y) \nabla_x k(X_t,y) \rho(y)dy-\nabla \pp{\frac{\delta E}{\delta \rho_t}}(X_t).
\end{aligned}
$$
This derives Example 11.

\section{Wasserstein metric in Gaussian families}
In this section, we first introduce the Wasserstein metric, gradient flows and Hamiltonian flows in Gaussian families. Then, we validate the existence of (W-AIG) in Gaussian families. Denote $\mcN_n^0$ to the multivariate Gaussian densities with zero means.
Namely, if $\rho_0, \rho^*\in \mcN_n^0$, then we show that (W-AIG) has a solution $(\rho_t,\Phi_t)$ and $\rho_t\in \mcN_n^0$. 
%\subsection{Wasserstein metric in Gaussian families} %demonstrate that \eqref{equ:w_aig} has an ODE formulation in Gaussian family.

 %Similar to the routine in subsection \ref{ssec:metric} to subsection \ref{ssec:hami},

Let $\mbP^n$ and $\mbS^n$ represent symmetric positive definite matrix and symmetric matrix with size $n\times n$ respectively. Each $\rho\in \mcN_n^0$ is uniquely determined by its covariance matrix $\Sigma\in \mbP^n$.% by $ \rho(x;\Sigma)=\frac{(2\pi)^{-n/2}}{\sqrt{\det(\Sigma)}}\exp\lp-\frac{1}{2}x^T\Sigma^{-1}x\rp. $ 
 The Wasserstein metric $G^W(\rho)$ on $\mcP(\mbR^n)$ induces the Wasserstein metric $G^W(\Sigma)$ on $\mbP^n$, which is also known as the Bures metric, see \citep{owgot,gomds,wrgop}. For $\Sigma\in \mbP^n$, the tangent and cotangent space follow $T_{\Sigma}\mbP^n\simeq T^*_{\Sigma}\mbP^n\simeq \mbS^n$. 
 \begin{definition}[Wasserstein metric in Gaussian families]
 For $\Sigma\in \mbP^n$, the metric tensor $G^W(\Sigma):\mbS^n\to \mbS^n$ is defined by 
 $$
 G^W(\Sigma)^{-1}S = 2(\Sigma S+S\Sigma).
 $$
 The Wasserstein metric on $\mbS^n$ follows
 $$
 g_{\Sigma}^W(A_1,A_2) = \tr(A_1G(\Sigma)A_2)= 4\tr(S_1\Sigma S_2),
 $$
 where $S_i\in \mbS^n$ is the solution to 
 $$
 A_i = 2(\Sigma S_i+S_i\Sigma),\quad i=1,2.
 $$  
 \end{definition}

\subsection{Gradient flows and Hamiltonian flows in Gaussian families}
We derive the Wasserstein gradient flow and the Wasserstein Hamiltonian flow  in Gaussian families as follows.
 \begin{proposition}%[Gradient flow and Hamiltonian flow in Gaussian families]
 \label{prop:ghf}
 The Wasserstein gradient flow in Gaussian families writes 
 $$
 \dot \Sigma_t = -2(\Sigma_t\nabla_{\Sigma_t} E(\Sigma_t)+\nabla_{\Sigma_t} E(\Sigma_t)\Sigma_t).$$
 Here $\nabla_{\Sigma_t}$ is the standard matrix derivative.

 The Wasserstein Hamiltonian flow satisfies%is a system of $(\Sigma_t, S_t)\in \mbP^n\times \mbS^n$,
 \begin{equation}\label{equ:g_hf}
 \left\{
 \begin{aligned}
 &\dot \Sigma_t-2(S_t\Sigma_t+\Sigma_t S_t)=0,\\
 &\dot S_t+2S_t^2+\nabla_{\Sigma_t} E(\Sigma_t)=0,
 \end{aligned}\right.
 \end{equation}
 where $S_t\in \mbS^n$. The corresponding Hamiltonian satisfies
 $$\mcH^W(\Sigma_t,S_t)=2\tr(S_t\Sigma_tS_t)+E(\Sigma_t).$$
 \end{proposition}
The derivation of the gradient flow simply follows the definition of Wasserstein metric in Gaussian families. 

We then derive the Hamiltonian flow as follows. For $A\in \mbS^n$, we define the linear operator $M_A:\mbS^n\to \mbS^n$ by
\begin{equation*}
    M_AB = AB+BA,\quad B\in \mbS^n.
\end{equation*}
It is easy to verify that if $A\in \mbP^n$, then $M_A^{-1}$ is well-defined. For a flow $\Sigma_t\in \mbP^n, t\geq0$, we define the Lagrangian $L(\Sigma_t,\dot \Sigma_t)=\frac{1}{2}g_{\Sigma_t}(\dot\Sigma_t,\dot \Sigma_t)-E(\Sigma_t).$
The corresponding Euler-Lagrange equation writes
\begin{equation}\label{equ:g_el}
\frac{d }{d t}\frac{d L}{d\dot\Sigma_t}=\frac{d L}{d \Sigma}.
\end{equation}
Let $S_t=\frac{1}{2}M_{\Sigma_t}^{-1}\dot \Sigma_t$, i.e., $\dot \Sigma_t=2(S_t\Sigma_t+\Sigma_t S_t)$. Then, it follows
$$
\begin{aligned}
&g_{\Sigma_t}(\dot\Sigma_t,\dot\Sigma_t)=4\tr(S_t\Sigma_tS_t)=2\tr((S_t\Sigma_t+\Sigma_t S_t)S_t)\\
=&\tr(\dot\Sigma_tS_t)=\frac{1}{2}\tr(\dot \Sigma_tM_{\Sigma_t}^{-1}\dot \Sigma_t).
\end{aligned}
$$
This leads to $\frac{d L}{d \dot \Sigma_t}=\frac{1}{2}M_{\Sigma_t}^{-1}\dot\Sigma_t=S_t.$
For simplicity, we denote $g=g_{\Sigma_t}(\dot \Sigma_t,\dot \Sigma_t)$. First, we show that 
$$
\frac{d g}{d \Sigma_t}=-4S_t^2.
$$
Because $S_t=\frac{1}{2}M_{\Sigma_t}^{-1}\dot \Sigma_t$. Given $\dot\Sigma_t$, $S_t$ can be viewed as a continuous function of $\Sigma_t$. For any $A\in \mbS^n$, define $l_A=\tr((\Sigma_t S_t+S_t \Sigma_t)A)$. 
$$
\begin{aligned}
&0=\frac{d l_A}{d\Sigma_t}=\frac{\p S_t}{\p \Sigma_t} \frac{\p l_A}{\p S_t}+\frac{\p l_A}{\p \Sigma_t}\\
=&\frac{\p S_t}{\p \Sigma_t}(A\Sigma_t+\Sigma_t A)+(AS_t+S_tA).
\end{aligned}
$$
Here we view $\p S_T/\p \Sigma_t$ as a linear operator on $S^n$. Let $B=A\Sigma_t+\Sigma_t A$, then $A=M_{\Sigma_t}^{-1}B$. 
$\frac{\p S_t}{\p \Sigma_t} B+M_{S_t}M_{\Sigma_t}^{-1}B=0$ holds
for all $B\in S^n$. Therefore, we have $\frac{\p S_t}{\p \Sigma_t}=-M_{S_t}M_{\Sigma_t}^{-1}$. Hence,
$$
\begin{aligned}
\frac{d g}{d\Sigma_t}=&\frac{\p S_t}{\p \Sigma_t} \frac{\p g}{\p S_t}+\frac{\p g}{\p \Sigma_t} \\
=& -4M_{S_t}M_{\Sigma_t}^{-1}(S_t\Sigma_t+\Sigma_tS_t)+4S_t^2\\
=&-4M_{S_t}S_t+4S_t^2=-4S_t^2.
\end{aligned}
$$
As a result, the Euler-Lagrange equation \eqref{equ:g_el} is equivalent to
\begin{equation}\label{equ:g_el1}
\dot S_t =\frac{d}{dt}\frac{d L}{d\dot\Sigma_t}=\frac{d L}{d\Sigma_t}=-2S_t^2-\nabla E(\Sigma_t).
\end{equation}
Combining \eqref{equ:g_el1} with $\dot \Sigma_t=S_t\Sigma_t+\Sigma_t S_t$ renders the Hamiltonian flow in Gaussian families. 

%  Therefore, by adding a damping term $\alpha_tS_t$, we obtain the Wasserstein AIG flow in Gaussian.
%  \begin{equation}\label{equ:w_aig_g}
%  \left\{
%  \begin{aligned}
%  &\dot \Sigma_t-2(S_t\Sigma_t+\Sigma_t S_t)=0,\\
%  &\dot S_t+\alpha_tS_t+2S_t^2+\nabla_{\Sigma_t} E(\Sigma_t)=0,
%  \end{aligned}\right.\tag{W-AIG-G}
%  \end{equation}

\subsection{Proof of Proposition \ref{thm:connect}}
By adding a damping term $\alpha_tS_t$, we derive \eqref{equ:w_aig_g}, i.e., the Wasserstein AIG flow in Gaussian families. We present the proof of Proposition \ref{thm:connect} as follows. %The W-AIG flow in Gaussian families becomes an ODE system, which corresponds to updates of covariance matrices.
We first show that $\Sigma_t$ stays in $\mbP^n$. Suppose that $\Sigma_t\in \mbP_n$ for $0\leq t\leq T$. Define $H_t=H(\Sigma_t,S_t)=2\tr(S_t\Sigma_t S_t)+E(\Sigma_t)$. We observe that (W-AIG-G) is equivalent to
\begin{equation}
\dot \Sigma_t = \frac{\p H_t}{\p S_t},\quad \dot S_t = -\alpha_t S_t-\frac{\p H_t}{\p \Sigma_t}.
\end{equation}
We show that $H_t$ is decreasing with respect to $t$. 
$$
\begin{aligned}
&\frac{dH_t}{dt}=\tr\lp\frac{\p H_t}{\p S_t} \dot S_t+\frac{\p H_t}{\p \Sigma_t} \dot \Sigma_t\rp\\
=&\tr \lp\frac{\p H_t}{\p S_t}\lp-\alpha_tS_t-\frac{\p H_t}{\p \Sigma_t}\rp+\frac{\p H_t}{\p \Sigma_t}\frac{\p H_t}{\p S_t}\rp\\
=&-\alpha_t\tr\lp S_t\frac{\p H_t}{\p S_t}\rp=-2\alpha_t\tr(S_t(\Sigma_tS_t+S_t\Sigma_t))\\
=&-4\alpha_t \tr(S_t\Sigma_tS_t)\leq0.
\end{aligned}
$$
For simplicity, we denote $W^*=(\Sigma^*)^{-1}$. Let $\lambda_t$ be the smallest eigenvalue of $\Sigma_t$. Then, $\log\det(\Sigma_tW^*)=\log\det W^*+\log\det(\Sigma_t)\geq \log\det W^*+n\log\lambda_t.$
Therefore,
$$
\begin{aligned}
&-\frac{n}{2}(\log\lambda_t+1)-\frac{1}{2}\log\det W^*\\
\leq& -\frac{1}{2}\bb{\log\det (\Sigma_tW^*)+n}\\
\leq& E(\Sigma_t)\leq H(t)\leq H(0),
\end{aligned}
$$
which yields that
\begin{equation}\label{lambda_lower}
\lambda_t\geq \exp\lp-\frac{2}{n}H(0)-1-\frac{1}{n}\log\det W^*\rp.
\end{equation}
This means that as long as $\Sigma_t\in \mbP_n$, the smallest eigenvalue of $\Sigma_t$ has a positive lower bound. If there exists $T>0$ such that $\Sigma_T\notin \mbP_n$. Because $\Sigma_t$ is continuous with respect to $t$, there exists $T_1<T$, such that $\Sigma_t\in P_n$, $0\leq t\leq T_1$ and $\lambda_{T_1}<\exp\lp-2H(0)/n-1\rp$, which violates \eqref{lambda_lower}. 

We then reveal the relationship between (W-AIG) in $\mcP(\mbR^n)$ and $\mbP^n$. We observe that
$$
\begin{aligned}
&\frac{\p}{\p t}\det(\Sigma_t)=\det(\Sigma_t)\tr(\Sigma_t^{-1}\dot\Sigma_t),\\
&\frac{\p}{\p t} \Sigma_t^{-1}=-\Sigma_t^{-1}\dot \Sigma_t\Sigma_t^{-1}.
\end{aligned}
$$
Combining with $\dot\Sigma_t=2(\Sigma_tS_t+S_t\Sigma_t)$, we obtain
$$
\begin{aligned}
\tr(\Sigma_t^{-1}\dot \Sigma_t)=&2\tr(S_t+\Sigma_t^{-1}S_t\Sigma_t)=4\tr(S_t),\\
\tr(x\Sigma_t^{-1}\dot \Sigma_t \Sigma_t^{-1}x)=&2\tr(x^T\Sigma_t^{-1}S_tx+x^TS_t\Sigma_t^{-1}x)=4\tr(S_t\Sigma_t^{-1}xx^T).
\end{aligned}
$$
Therefore, it follows
$$
\begin{aligned}
\p_t \rho_t(x) =&\frac{\p }{\p t}\lp\frac{1}{{\sqrt{\det(\Sigma_t)}}}\rp\sqrt{\det(\Sigma_t)}\rho_t(x) +\frac{1}{2}\tr(x^T\Sigma_t^{-1}\dot\Sigma_t\Sigma_t^{-1}x)\rho_t(x)\\
=&-\frac{1}{2}\tr(\Sigma_t^{-1}\dot\Sigma_t)\rho_t(x)+2\tr(S_t\Sigma_t^{-1}xx^T)\rho_t(x)\\
=&-2\tr(S_t(I-\Sigma_t^{-1}xx^T))\rho_t(x).\\
\end{aligned}
$$
Note that $\nabla \Phi_t(x) = 2S_tx$. Hence, we have
$$
\begin{aligned}
&-\nabla\cdot(\rho_t\nabla\Phi_t) = -2\sum_{i=1}^n \p_i (\rho_t(x) S_tx)_i\\
=&-2\sum_{i=1}^n\lb\rho_t(x) \p_i(S_tx)_i+(S_tx)_i\p_i \rho_t(x)\rb\\
=&-2\rho_t(x) \lb\tr(S_t)+(S_tx)^T(-\Sigma_t^{-1}x)\rb\\
=&-2\rho_t(x)\tr(S_t(I-\Sigma_t^{-1}xx^T))=\p_t\rho_t(x).
\end{aligned}
$$
The first equation of (W-AIG) holds. Because $\p_t\Phi_t(x)=x^T\dot S_tx+\dot C(t)$, 
$$
\begin{aligned}
&\p_t\Phi_t(x)+\alpha_t\Phi_t(x)+\frac{1}{2}\|\nabla \Phi_t(x)\|^2\\
=&x^T\dot S_tx+\alpha_tx^TS_tx+2x^TS_t^2x+\dot C(t)\\
=&-x^T\nabla_{\Sigma_t} E(\Sigma_t)x+\dot C(t)\\
=&\frac{1}{2}x^T(\Sigma_t^{-1}-W^*)x+\dot C(t).
\end{aligned}
$$
Note that $\rho^*$ is the Gaussian density with the covariance matrix $\Sigma^*$. Because $\dot C(t) = \frac{1}{2}\log\det(\Sigma_tW^*)-1$, we can compute
$$
\begin{aligned}
&\frac{\delta E}{\delta \rho_t}=\log \rho_t(x)-\log\rho^*(x)+1\\
=&-\frac{1}{2}x^T(\Sigma_t^{-1}-W^*)x-\frac{1}{2}\log\det(\Sigma_tW^*)+1\\
=&-\frac{1}{2}x^T(\Sigma_t^{-1}-W^*)x-\dot C(t) \\
=& -(\p_t\Phi_t(x)+\alpha_t\Phi_t(x)+\frac{1}{2}\|\nabla \Phi_t(x)\|^2).
\end{aligned}
$$
Therefore, the second equation of (W-AIG) holds. Because $\Sigma_t|_{t=0}=\Sigma_0$,  $S_t|_{t=0}=0$ and $C(0)=0$, we have $\rho_t|_{t=0}=\rho_0$ and $\Phi_t|_{t=0}=0$. This completes the proof.

\section{Proof of convergence rate under Wasserstein metric}
In this section, we briefly review the Riemannian structure of probability space and present proofs of propositions in Section 4 under Wasserstein metric.
\subsection{A brief review on the geometric properties of the probability space}
Suppose that we have a metric $g_\rho$ in probability space $\mcP(\Omega)$. Given two probability densities $\rho_0,\rho_1\in\mcP(\Omega)$, we define the distance as follows
{\small
$$
\begin{aligned}
&\mcD(\rho_0,\rho_1)^2 =\inf_{\hat\rho_s}\left\{\int_0^1g_{\hat \rho_s}(\p_s\hat \rho_s,\p_s\hat \rho_s)ds:\hat \rho_s|_{s=0}=\rho_0,\hat \rho_s|_{s=1}=\rho_1\right\}.
\end{aligned}
$$}
The minimizer $\hat \rho_s$ of the above problem is defined as the geodesic curve connecting $\rho_0$ and $\rho_1$. An exponential map at $\rho_0\in\mcP(\Omega)$ is a mapping from the tangent space $T_{\rho_0}\mcP(\Omega)$ to $\mcP(\Omega)$. Namely, $\sigma\in T_{\rho_0}\mcP(\Omega)$ is mapped to a point $\rho_1\in \mcP(\Omega)$ such that there exists a geodesic curve $\hat \rho_s$ satisfying $\hat \rho_s|_{s=0}=\rho_0,\p_s\hat \rho_s|_{s=0}=\sigma,$ and $\hat \rho_s|_{s=1}=\rho_1$. 

\subsection{The inverse of exponential map}
In this subsection, we characterize the inverse of exponential map in the probability space with the Wasserstein metric. 

 \begin{proposition}\label{prop:deri}
 Denote the geodesic curve $\gamma(s)$ that connects $\rho_t$ and $\rho^*$ by $\gamma(s)=(sT_t+(1-s)\Id)\#\rho_t,\,s\in[0,1]$. Here $\Id$ is the identity mapping from $\mbR^n$ to itself. Then, $\p_s\gamma(s)|_{s=0}$ corresponds to a tangent vector $-\nabla\cdot(\rho_t(x) (T_t(x)-x))\in T_{\rho_t}\mcP(\Omega)$.
 \end{proposition}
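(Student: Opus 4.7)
The plan is to verify the claim by a direct test-function computation against the definition of pushforward, followed by a single integration by parts. The geodesic curve is given explicitly as $\gamma(s) = F_s \# \rho_t$ with the displacement interpolation map $F_s(x) = sT_t(x) + (1-s)x$, so $F_0 = \mathrm{Id}$ and $\gamma(0) = \rho_t$. The desired object $\p_s\gamma(s)|_{s=0}$ is a tangent vector in $T_{\rho_t}\mcP(\Omega)$, and the natural way to identify it is to pair it with an arbitrary smooth compactly supported test function $\phi$ on $\Omega$ and compute what comes out.

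First, by the definition of pushforward, for every admissible test function $\phi$,
\begin{equation*}
\int \phi(x)\,\gamma(s)(x)\,dx \;=\; \int \phi\bigl(sT_t(x)+(1-s)x\bigr)\,\rho_t(x)\,dx.
\end{equation*}
Since the right-hand side is smooth in $s$ and $\phi$ is compactly supported (so differentiation under the integral is justified), differentiating at $s=0$ yields
\begin{equation*}
\int \phi(x)\,\p_s\gamma(s)\big|_{s=0}(x)\,dx \;=\; \int \nabla\phi(x)\cdot\bigl(T_t(x)-x\bigr)\,\rho_t(x)\,dx.
\end{equation*}

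Next, I would apply integration by parts in $x$ on the right-hand side. Assuming $\rho_t(T_t-\mathrm{Id})$ has enough decay at the boundary of $\Omega$ (or that $\phi$ vanishes there), the boundary term drops and I obtain
\begin{equation*}
\int \phi(x)\,\p_s\gamma(s)\big|_{s=0}(x)\,dx \;=\; -\int \phi(x)\,\nabla\!\cdot\!\bigl(\rho_t(x)(T_t(x)-x)\bigr)\,dx.
\end{equation*}
Since $\phi$ is arbitrary, this identifies $\p_s\gamma(s)|_{s=0} = -\nabla\!\cdot\!\bigl(\rho_t(x)(T_t(x)-x)\bigr)$ as an element of $T_{\rho_t}\mcP(\Omega)$; note that it integrates to zero automatically, consistent with the definition of the tangent space recalled in Section~2.

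The computation itself is routine; the only subtle point is the regularity/boundary handling required for the integration by parts and for moving $\p_s$ inside the integral. In the standard Brenier setup on convex $\Omega$ or on $\mbR^n$ with $\rho_t,\rho^*$ absolutely continuous and of finite second moment, $T_t$ is the unique optimal transport map, the interpolation $F_s\#\rho_t$ is the McCann geodesic, and these regularity issues are handled by a density argument over $\phi\in C_c^\infty$. So the main (mild) obstacle is bookkeeping on regularity; the geometric content reduces to the above two-line computation.
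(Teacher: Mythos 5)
Your proof is correct, and it takes a genuinely different route from the paper. You work in the weak (distributional) formulation: you pair $\gamma(s)$ against a test function $\phi$, use the pushforward identity $\int \phi\,\gamma(s)\,dx = \int \phi(F_s(x))\,\rho_t(x)\,dx$ with $F_s = sT_t + (1-s)\Id$, differentiate under the integral at $s=0$, and integrate by parts once. The paper instead works in the strong (pointwise) formulation via the change-of-variables formula for densities: it writes $\gamma(s)(x) = \det(\nabla T_t^s(x))\,\rho_t(T_t^s(x))$ with $T_t^s = F_s^{-1}$, differentiates the inverse map to get $\partial_s T_t^s|_{s=0} = \Id - T_t$, differentiates the Jacobian determinant to get $\partial_s \det(\nabla T_t^s)|_{s=0} = \tr(I - \nabla T_t)$, and then reassembles the two contributions (a $\tr(I-\nabla T_t)\rho_t$ term plus a $\la\nabla\rho_t, x - T_t\ra$ term) into $-\nabla\cdot(\rho_t(T_t - \Id))$. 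Your route buys simplicity and robustness: it never needs $T_t$ to be invertible, never differentiates a Jacobian determinant, and it is the canonical way one derives the continuity equation for a family of pushforwards, so the regularity issues you flag are mild and standard. The paper's route buys an immediate pointwise formula for the density derivative without passing through duality, at the cost of assuming enough smoothness and invertibility of $F_s$ to justify the Jacobian calculus. Both lead to the same identity, which is all Proposition~\ref{prop:geo} needs.
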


For simplicity, we denote $T_t^{s}=(sT_t+(1-s)\Id)^{-1},s\in\bb{0,1}$. Based on the theory of optimal transport \citep{tiot}, we can write the explicit formula of the geodesic curve $\gamma(s)$ by
$$
\gamma(s) = T_t^{s}\#\rho_t=\det(\nabla T_t^s)\rho_t\circ T_t^s.
$$
Through basic calculations, we can compute that
$$
\left.\frac{d}{ds}T_t^s\right|_{s=0}=-\left.\frac{d}{ds}(sT_t+(1-s)\Id)\right|_{s=0}=\Id-T_t.
$$
{\small
$$
\begin{aligned}
\left.\frac{d}{ds}\det(\nabla T_t^s)\right|_{s=0}=&\left.\frac{d}{ds}\det(I+s(I-DT_t)+o(s))\right|_{s=0}\\
=&\tr(I-DT_t).
\end{aligned}
$$
}
Therefore, we have
$$
\begin{aligned}
&\left.\p_s \gamma(s)\right|_{s=0}(x)\\
=&\tr(I-\nabla T_t)\rho_t(x)+\la\nabla \rho_t(x),x-\varphi_t(x)\ra\\
=&\nabla\cdot(x-T_t(x))\rho_t(x)+\la\nabla \rho_t(x),x-T_t(x)\ra\\
=&-\nabla\cdot(\rho_t(x)(T_t(x)-x)),
\end{aligned}
$$
which completes the proof.

%The results in Theorem \ref{thm:conv_e} can be extended to $E(\rho)$ other than the KL divergence. 
% For general convex $E(\rho)$, we obtain the same result in \citep[Theorem 1]{affpd}. Their proof comes from the Lagrangian formulation \eqref{equ:w_aig_p} and our proof is based on the Eulerian formulation \eqref{equ:w_aig}. However, their technical assumption $\mbE \lb\lp X_t+e^{-\gamma_t}Y_t-T_{\rho_t}^{\rho_\infty}(X_t)\rp\cdot\frac{d}{dt}T_{\rho_t}^{\rho_\infty}(X_t)\rb=0$ is only valid in $1$-dimensional case. 
% %In high-dimensional space, this does not hold. 
% In subsection of the supplementary material, we prove that this quantity is \textbf{non-negative}. This is due to the Hodge decomposition behind the optimal transport, see Lemma 3 in subsection 3.3 of the supplementary material.

\subsection{The proof of Proposition 4 and 5}
\label{app:56}
The main goal of this subsection is to prove the Lyapunov function $\mcE(t)$ is non-increasing. 

\textbf{Preparations.} We first give a better characterization of the optimal transport plan $T_t$. We can write $T_t=\nabla \Psi_t$, where $\Psi_t$ is a strictly convex function, see \citep{tiot}. This indicates that $\nabla T_t$ is symmetric. We then introduce the following proposition.
\begin{proposition} \label{prop:geo}
Suppose that $E(\rho)$ satisfies Hess($\beta$) for $\beta\geq 0$. Let $T_t(x)$ be the optimal transport plan from $\rho_t$ to $\rho^*$, then 
$$
\begin{aligned}
E(\rho^*)\geq & E(\rho_t)+\int  \la T_t(x)-x,\nabla \frac{\delta E}{\delta \rho_t}\ra\rho dx+\frac{\beta}{2} \int  \|T_t(x)-x\|^2\rho_t dx.
\end{aligned}
$$
\end{proposition}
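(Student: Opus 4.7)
The plan is to use the Wasserstein geodesic from $\rho_t$ to $\rho^*$ and apply a second-order expansion of $E$ along it. Let $\gamma(s)=((1-s)\Id+sT_t)\#\rho_t$ for $s\in[0,1]$; by Brenier's theorem this is the (constant-speed) $W_2$-geodesic with $\gamma(0)=\rho_t$ and $\gamma(1)=\rho^*$. Setting $\phi(s)=E(\gamma(s))$, the target inequality becomes $\phi(1)\geq \phi(0)+\phi'(0)+\tfrac{\beta}{2}\int\|T_t(x)-x\|^2\rho_t(x)\,dx$.

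First I would identify $\phi'(0)$. By the $L^2$ chain rule, $\phi'(0)=\int\tfrac{\delta E}{\delta\rho_t}(x)\,\partial_s\gamma(s)\big|_{s=0}(x)\,dx$. Proposition \ref{prop:deri} gives $\partial_s\gamma(s)|_{s=0}=-\nabla\cdot(\rho_t(T_t-\Id))$, and integration by parts then yields $\phi'(0)=\int\langle\nabla\tfrac{\delta E}{\delta\rho_t},T_t(x)-x\rangle\rho_t(x)\,dx$, which is exactly the first-order term in the claim.

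Next I would handle the second-order term via the Hess($\beta$) assumption of Definition \ref{def:hess}. Since $\gamma$ is a $W_2$-geodesic, standard Otto calculus identifies $\phi''(s)$ with $g_{\gamma(s)}(\Hess E(\gamma(s))\dot\gamma(s),\dot\gamma(s))$, and the speed $g_{\gamma(s)}(\dot\gamma(s),\dot\gamma(s))$ is constant in $s$, equal to $\int\|T_t(x)-x\|^2\rho_t(x)\,dx$. Hence Hess($\beta$) forces $\phi''(s)\geq\beta\int\|T_t(x)-x\|^2\rho_t(x)\,dx$ for all $s\in[0,1]$. Integrating twice from $0$ to $1$ produces the stated inequality.

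The main obstacle is the rigor in the second-order step: justifying that the second derivative of $\phi$ along displacement interpolation coincides with the Riemannian Hessian contracted with $\dot\gamma$, and that the geodesic speed is constant. Both are standard in optimal transport under mild regularity on $\rho_t$ and $\rho^*$ (existence and convexity of the Brenier potential), but they are not proved in the main text. I would either cite a reference such as \cite{tiot} or give a short explicit calculation in the spirit of the $T_t^s$ argument used to prove Proposition \ref{prop:deri}, writing $\gamma(s)=T_t^s\#\rho_t$, differentiating $\phi$ twice in $s$, and observing that the change-of-variables turns the resulting integrals into $\int\|T_t-\Id\|^2\rho_t\,dx$-weighted quantities that are manifestly $s$-independent.
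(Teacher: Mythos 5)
Your proposal is correct and takes essentially the same route as the paper, which only asserts that the proposition is ``a direct result of $\beta$-displacement convexity of $E(\rho)$ based on Proposition~\ref{prop:deri}.'' You have simply made explicit the standard Taylor-expansion argument along the $W_2$ geodesic $\gamma(s)=((1-s)\Id+sT_t)\#\rho_t$: identify $\phi'(0)$ via Proposition~\ref{prop:deri} and integration by parts, bound $\phi''(s)$ below by $\beta$ times the (constant) geodesic speed using Definition~\ref{def:hess}, and integrate twice.
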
 
This is a direct result of $\beta$-displacement convexity of $E(\rho)$ based on Proposition \ref{prop:deri}. 

\begin{lemma}
Denote $u_t=\p_t (T_t)^{-1}\circ T_t$. Then,$u_t$ satisfies
\begin{equation}\label{equ:vt}
\nabla\cdot \lp\rho_t(u_t-\nabla\Phi_t)\rp=0.
\end{equation}
We also have
\begin{equation}\label{equ:pttt}
\p_tT_t(x)=-\nabla T_t(x)u_t(x).
\end{equation}
\end{lemma}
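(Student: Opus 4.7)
The plan is to derive both identities by direct chain-rule computations, exploiting the fact that the Brenier map $T_t$ pushes the evolving density $\rho_t$ onto the fixed target $\rho^*$. The identity \eqref{equ:pttt} comes from implicit differentiation of the inverse relation $T_t\circ T_t^{-1}=\mathrm{Id}$; the identity \eqref{equ:vt} comes from comparing two expressions for $\p_t\rho_t$, one obtained by pushing forward through $T_t^{-1}$ and one from the continuity equation in (W-AIG). I would prove \eqref{equ:pttt} first and then use it implicitly when carrying out the change of variables in the proof of \eqref{equ:vt}.

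\emph{Step 1 (proof of \eqref{equ:pttt}).} Set $S_t:=T_t^{-1}$, so that by definition $u_t(x)=\p_tS_t(T_t(x))$. Differentiating the identity $T_t(S_t(y))=y$ in $t$ at fixed $y$, the chain rule yields
\[
\p_tT_t(S_t(y))+\nabla T_t(S_t(y))\,\p_tS_t(y)=0.
\]
Substituting $y=T_t(x)$ and recognizing $\p_tS_t(T_t(x))=u_t(x)$ gives $\p_tT_t(x)=-\nabla T_t(x)\,u_t(x)$.

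\emph{Step 2 (proof of \eqref{equ:vt}).} The pushforward relation $T_t\#\rho_t=\rho^*$ is $t$-independent, or equivalently $S_t\#\rho^*=\rho_t$. For any test function $\psi\in C_c^\infty(\Omega)$,
\[
\int \psi(x)\,\p_t\rho_t(x)\,dx=\p_t\int \psi(S_t(y))\rho^*(y)\,dy=\int \nabla\psi(S_t(y))\cdot\p_tS_t(y)\,\rho^*(y)\,dy.
\]
Changing variables via $y=T_t(x)$ (using $T_t\#\rho_t=\rho^*$) the right-hand side becomes $\int \nabla\psi(x)\cdot u_t(x)\rho_t(x)\,dx$, and integration by parts then yields $\p_t\rho_t=-\nabla\cdot(\rho_t u_t)$. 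On the other hand, the first equation of (W-AIG) gives $\p_t\rho_t=-\nabla\cdot(\rho_t\nabla\Phi_t)$. Equating the two expressions for $\p_t\rho_t$ produces $\nabla\cdot\lp\rho_t(u_t-\nabla\Phi_t)\rp=0$.

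\emph{Main obstacle.} The main delicate point is justifying sufficient regularity of $T_t$ in both $t$ and $x$ so that the chain rule, the change of variables, and the integration by parts are rigorous. By Brenier's theorem, $T_t=\nabla\Psi_t$ with $\Psi_t$ strictly convex, so $\nabla T_t$ is symmetric and invertible wherever $\Psi_t$ is smooth; under the smoothness assumptions on $\rho_t$ and $\rho^*$ implicit in the preceding sections, Caffarelli-type regularity supplies $C^{1,\alpha}$ smoothness of $T_t$ in $x$, while smoothness of $\rho_t$ in $t$ propagates to smoothness of $T_t$ in $t$ through the implicit relation $T_t\#\rho_t=\rho^*$. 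With these in hand, the remaining computations are essentially a few lines of manipulation and require no further ideas.
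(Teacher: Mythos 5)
Your proof is correct and follows essentially the same route as the paper: equation \eqref{equ:pttt} by differentiating the inverse relation $T_t\circ T_t^{-1}=\mathrm{Id}$ in $t$ (the paper does this via a Taylor expansion in $t$, which is the same chain-rule computation in disguise), and \eqref{equ:vt} by noting that $u_t$ is the Eulerian velocity field of the flow $X_t=T_t^{-1}X_0$ with $X_0\sim\rho^*$, so $\rho_t$ satisfies the continuity equation with velocity $u_t$, which you then subtract from the (W-AIG) continuity equation (the paper phrases this via ``Euler's equation'' rather than testing against $\psi$, but it is the same step). One small remark: your Step 2 does not actually use \eqref{equ:pttt}, only the pushforward identity $S_t\#\rho^*=\rho_t$ and the definition of $u_t$, so the two parts of the lemma are logically independent, exactly as in the paper.
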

\begin{proof}
Because $(T_t)^{-1}\#{\rho^*}={\rho_t}$, let $u_t=\p_t(T_t)^{-1}\circ T_t$ and $X_t=(T_t)^{-1}X_0$, where $X_0\sim \rho^*$. This yields $\frac{d}{dt}X_t=u_t(X_t)$. The distribution of $X_t$ follows $\rho_t$. By the Euler's equation, $\rho_t$ shall follows $$\p_t\rho_t +\nabla \cdot(\rho_t u_t)=0.$$ 
Combining this with the continuity equation \eqref{equ:cont} yields \eqref{equ:vt}. 

Then, we formulate $\p_tT_t(x)$ with $u_t$. By the Taylor expansion,
$$
T_{t+s}(x)=T_t(x)+s\p_t T_t(x)+o(s).
$$
Let $y=(T_t)^{-1}x$.  it follows
$$
\begin{aligned}
(T_{t+s})^{-1}(x)=&(T_t)^{-1}(x)+su_t((T_t)^{-1}(x))+o(s)=y+su_t(y)+o(s).
\end{aligned}
$$
Therefore, we have
$$
\begin{aligned}
&0=T_{t+s}((T_{t+s})^{-1}(x))-x\\
=&T_{t+s}(y+su_t (y)+o(s))-x\\
=&T_t(y+su_t (y))+s\p_t T_t(y+su_t (y))-x+o(s)\\
=&T_t(y)+s\nabla T_t(y) u_t(y)+s\p_t T_t(y)-x+o(s)\\
=&s\lb\nabla T_t(y) u_t(y)+\p_t T_t(y)\rb+o(s).
\end{aligned}
$$
We shall have $\nabla T_t(y) u_t(y)+\p_t T_t(y)=0$. Replacing $y$ by $x$ yields \eqref{equ:pttt}.
\end{proof}

The following lemma illustrates two important properties of $u_t$ and $\p_tT_t$.
\begin{lemma}\label{lem:vt}
For $u_t$ satisfying \eqref{equ:vt}, we have
$$
\begin{aligned}
&\int  \la \nabla \Phi_t-u_t, \nabla T_t\nabla \Phi_t\ra\rho_tdx\geq 0,\\
& \int \la\nabla \Phi_t-u_t, \nabla T_t(x)(T_t(x)-x) \ra\rho_t=0.
\end{aligned}
$$
\end{lemma}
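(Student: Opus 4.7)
The plan rests on two structural facts about $T_t$: since $T_t=\nabla\Psi_t$ for a strictly convex $\Psi_t$, its Jacobian $\nabla T_t=\nabla^2\Psi_t$ is symmetric and positive semi-definite, and by \eqref{equ:vt} any pure gradient paired against $\rho_t(\nabla\Phi_t-u_t)$ integrates to zero after integration by parts. Both parts of the lemma reduce to spotting that a specific vector field is a gradient in $x$ and then applying these two facts.

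For the second identity, I would introduce $\phi_t(x)=\Psi_t(x)-\tfrac{1}{2}\|x\|^2$, so that $\nabla\phi_t=T_t-x$ and $\nabla^2\phi_t=\nabla T_t-I$. A one-line computation then gives
\begin{equation*}
\nabla T_t(T_t-x)=(\nabla^2\phi_t+I)\nabla\phi_t=\nabla\lp\phi_t+\tfrac{1}{2}\|\nabla\phi_t\|^2\rp,
\end{equation*}
and integration by parts combined with \eqref{equ:vt} immediately makes the integral vanish.

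For the inequality, the analogous observation is that $\partial_t T_t=\partial_t\nabla\Psi_t=\nabla(\partial_t\Psi_t)$, so \eqref{equ:pttt} yields $\nabla T_t u_t=-\nabla(\partial_t\Psi_t)$, again a pure gradient. Using symmetry of $\nabla T_t$, integration by parts, and the continuity equation (which holds with both $\nabla\Phi_t$ and $u_t$ as velocity fields for $\rho_t$, by \eqref{equ:vt}), I expect the key identity
\begin{equation*}
\int\la u_t,\nabla T_t\nabla\Phi_t\ra\rho_tdx=-\int\partial_t\Psi_t\,\partial_t\rho_tdx=\int\la u_t,\nabla T_t u_t\ra\rho_tdx.
\end{equation*}
Starting from $0\leq\int\la\nabla\Phi_t-u_t,\nabla T_t(\nabla\Phi_t-u_t)\ra\rho_tdx$ (positivity of $\nabla T_t$), expanding via symmetry, and applying this identity twice (once to simplify the cross term, once on the pure $u_t$-term) collapses the right-hand side to $\int\la\nabla\Phi_t-u_t,\nabla T_t\nabla\Phi_t\ra\rho_tdx$, which is therefore nonnegative.

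The only genuinely non-routine step is recognizing both gradient structures: the first uses only the Brenier representation of $T_t$, while the second requires pairing \eqref{equ:pttt} with $\partial_t T_t=\nabla(\partial_t\Psi_t)$. After that, everything is bookkeeping with integration by parts and the two continuity equations for $\rho_t$.
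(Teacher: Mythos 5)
Your proposal is correct and follows essentially the same route as the paper: identify $\nabla T_t=\nabla^2\Psi_t$ as symmetric positive semi-definite, recognize that $\nabla T_t u_t=-\nabla(\partial_t\Psi_t)$ and $\nabla T_t(T_t-x)$ are gradients, and pair them against the $\rho_t$-divergence-free field $\nabla\Phi_t-u_t$; the paper organizes the inequality slightly differently (showing $\int\la\nabla\Phi_t-u_t,\nabla T_t u_t\ra\rho_t\,dx=0$ directly and adding it to the quadratic form), but this is the same calculation. Your closed form $\nabla T_t(T_t-x)=\nabla\bigl(\phi_t+\tfrac12\|\nabla\phi_t\|^2\bigr)$ with $\phi_t=\Psi_t-\tfrac12\|x\|^2$ is actually cleaner than the expression printed in the paper, which appears to contain a typo.
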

\begin{proof}
We first notice that $u_t-\nabla\Phi_t$ is divergence-free in term of $\rho_t$. From $-\nabla T_t u_t = \p_tT_t = \nabla \p_t\Psi_t$, we observe that $-\nabla T_t u_t$ is the gradient of $\p_t\Psi_t$. Therefore, 
$$
\begin{aligned}
&\int  \la \nabla \Phi_t-u_t, \nabla T_tu_t \ra \rho_t = -\int \la \p_t \Psi_t, \nabla\cdot (\rho_t(\nabla \Phi_t-u_t))\ra =0.
\end{aligned}
$$
Based on our previous characterization on the optimal transport plan $T_t$, $\nabla T_t = \nabla^2\Psi_t$ is symmetric positive definite. This yields that
{\small
$$
\begin{aligned}
&\int  \la \nabla \Phi_t-u_t, \nabla T_t\nabla \Phi_t\ra\rho_tdx\\
=&\int  \la \nabla \Phi_t-u_t, \nabla T_t\nabla \Phi_t\ra\rho_tdx-\int  \la \nabla \Phi_t-u_t, \nabla T_tu_t \ra \rho_t\\
=&\int  \la \nabla \Phi_t-u_t,\nabla T_t(\nabla \Phi_t-u_t)\ra\rho_tdx\geq 0.
\end{aligned}
$$}
The last inequality utilizes that $\nabla T_t$ is positie definite and $\rho_t$ is non-negative. Then, we prove the equality in Lemma \ref{lem:vt}. Because $\nabla T_t(x)(T_t(x)-x) = \frac{1}{2}\nabla( \|T_t(x)-x\|^2+T_t(x)-\|x\|^2)$ is a gradient. Similarly, it follows 
$$
\int \la\nabla \Phi_t-u_t, \nabla T_t(x)(T_t(x)-x) \ra\rho_t=0.
$$
\end{proof}
Lemma \ref{lem:vt} and the relationship \eqref{equ:pttt} gives
\begin{equation}\label{equ:tttt1}
\begin{aligned}
&-\int \la \p_tT_t, \nabla\Phi_t\ra\rho_t dx=\int \la u_t, \nabla T_t\nabla \Phi_t\ra\rho_tdx\leq\int \la \nabla \Phi_t, \nabla T_t\nabla\Phi_t\ra\rho_tdx,
\end{aligned}
\end{equation}
\begin{equation}\label{equ:tttt2}
\begin{aligned}
&\int \la \p_tT_t, T_t(x)-x\ra\rho_t dx=-\int \la \nabla \Phi_t, \nabla T_t(x)(T_t(x)-x)\ra\rho_tdx.
\end{aligned}
\end{equation}
\textbf{Proof of Proposition 4.} Based on the definition of the Wasserstein metric, we have
\begin{equation*}
    \p_t E(\rho_t) = -\int  \frac{\delta E}{\delta \rho_t}\nabla \cdot(\rho_t\nabla \Phi_t)dx.
\end{equation*} 
Differentiating $\mcE(t)$ w.r.t. $t$ renders
{\small
\begin{align}
&\dot\mcE(t)e^{-\sqrt{\beta}t}\notag\\
=&\beta \int  \la \p_t T_t, T_t(x)-x\ra \rho_tdx-\frac{\beta}{2}\int  \|T_t(x)-x\|^2\nabla \cdot(\rho_t\nabla \Phi_t) dx\notag\\
&-\sqrt{\beta} \int \la \p_tT_t,\nabla \Phi_t\ra\rho_tdx-\sqrt{\beta}\int \la T_t(x)-x,\p_t\nabla \Phi_t\ra\rho_tdx\notag\\
&+\sqrt{\beta}\int  \la T_t(x)-x,\nabla \Phi_t\ra\nabla \cdot(\rho_t\nabla \Phi_t) dx+\int  \la\nabla \Phi_t,\p_t\nabla \Phi_t\ra\rho_tdx\notag\\
&-\frac{1}{2}\int  \|\nabla \Phi_t\|^2\nabla\cdot(\rho_t\nabla \Phi_t)-\int  \frac{\delta E}{\delta \rho_t}\nabla \cdot(\rho_t\nabla \Phi_t)dx\notag\\
&+\frac{\sqrt{\beta}}{2}\int  \|\nabla \Phi_t\|^2\rho_tdx-\beta \int  \la T_t(x)-x,\nabla \Phi_t(x)\ra\rho_tdx\notag\\
&+\frac{\sqrt{\beta^3}}{2} \int  \| T_t(x)-x\|^2\rho_tdx+\sqrt{\beta}(E(\rho_t)-E(\rho^*)).\label{no6}
\end{align}}
For the part \eqref{no6}, Proposition \ref{prop:geo} renders
\begin{equation}\label{equ:ttx}
\begin{aligned}
&\frac{\sqrt{\beta^3}}{2} \int  \| T_t(x)-x\|^2\rho_tdx+\sqrt{\beta}E(\rho_t)\\
\leq& -\sqrt{\beta} \int  \la T_t(x)-x,\nabla \frac{\delta E}{\delta \rho_t}\ra\rho_t dx.
\end{aligned}
\end{equation}
We first compute the terms with the coefficient $\beta^0$ in $\dot\mcE(t)e^{-\sqrt{\beta}t}\notag$. We observe that
\begin{equation}\label{equ:beta0}
\begin{aligned}
&\int   \la\nabla \Phi_t,\p_t\Phi_t\ra\rho_tdx -\frac{1}{2}\int   \|\nabla \Phi_t\|^2\nabla\cdot(\rho_t\nabla \Phi_t)dx\\
&-\int  \frac{\delta E}{\delta \rho_t}\nabla \cdot(\rho_t\nabla \Phi_t)\rho_t dx\\
=&\int \la\p_t\nabla \Phi_t+\frac{1}{2}\nabla \|\nabla \Phi_t\|^2+\nabla \frac{\delta E}{\delta \rho},\nabla \Phi_t\ra\rho_tdx\\
=&-2\sqrt{\beta}\int  \|\nabla \Phi_t\|^2\rho_tdx,
\end{aligned}
\end{equation}
where the last equality uses (W-AIG) with $\alpha_t=2\sqrt{\beta}$. Substituting \eqref{equ:ttx} and \eqref{equ:beta0} into the expression of $\dot\mcE(t)e^{-\sqrt{\beta}t}$ yields
{\small
\begin{equation}\label{equ:mce_1}
\begin{aligned}
\dot\mcE(t)e^{-\sqrt{\beta}t}\leq&\beta \int  \la \p_t T_t, T_t(x)-x\ra \rho_tdx-\frac{\beta}{2}\int  \|T_t(x)-x\|^2\nabla \cdot(\rho_t\nabla \Phi_t) dx\\
&-\beta \int  \la T_t(x)-x,\nabla \Phi_t\ra\rho_tdx-\sqrt{\beta} \int \la \p_tT_t,\nabla \Phi_t\ra\rho_tdx\\
&-\sqrt{\beta}\int \la T_t(x)-x,\p_t\nabla \Phi_t\ra\rho_tdx-\sqrt{\beta} \int  \la T_t(x)-x,\nabla \frac{\delta E}{\delta \rho_t}\ra\rho_t dx\\
&+\sqrt{\beta}\int  \la T_t(x)-x,\nabla \Phi_t\ra\nabla \cdot(\rho_t\nabla \Phi_t) dx-\frac{3\sqrt{\beta}}{2}\int  \|\nabla \Phi_t\|^2\rho_tdx.
\end{aligned}
\end{equation}}
Then, we deal with the terms with $\nabla \cdot(\rho_t\nabla \Phi_t)$. We have the following two identities
{\small
\begin{equation}\label{equ:idpt1}
\begin{aligned}
&\int  \la T_t(x)-x,\nabla \Phi_t\ra\nabla \cdot(\rho_t\nabla \Phi_t) dx\\
=&-\int  \la\nabla \la T_t(x)-x,\nabla \Phi_t\ra,\nabla \Phi_t\ra\rho_t dx\\
=&-\int  \la \nabla \Phi_t,\nabla^2\Phi_t(x) (T_t(x)-x)+(\nabla T_t-I)\nabla \Phi_t\ra \rho_tdx\\
=&-\frac{1}{2}\int  \la T_t(x)-x,\nabla\|\nabla \Phi_t\|^2\ra\rho_t dx-\int \la \nabla \Phi_t, \nabla T_t\nabla \Phi_t\ra\rho_t dx+\int  \|\nabla \Phi_t\|^2\rho_t dx.
\end{aligned}
\end{equation}}
\begin{equation}\label{equ:idpt2}
\begin{aligned}
&-\frac{1}{2}\int  \|T_t(x)-x\|^2\nabla \cdot(\rho_t\nabla \Phi_t) dx\\
=&\int \la (\nabla T_t(x)-I)(T_t(x)-x), \nabla \Phi_t\ra \rho_tdx\\
=&\int \la T_t(x)-x,\nabla T_t\nabla \Phi_t\ra\rho_tdx-\int \la T_t(x)-x,\nabla \Phi_t\ra\rho_tdx.
\end{aligned}
\end{equation}

Hence, we can proceed to compute the terms with the coefficient $\sqrt{\beta}$.  \eqref{equ:tttt1} and \eqref{equ:idpt1} yields
{\small
\begin{equation}\label{equ:sqrtb}
\begin{aligned}
&-\sqrt{\beta} \int \la \p_tT_t,\nabla \Phi_t\ra\rho_tdx-\sqrt{\beta}\int \la T_t(x)-x,\p_t\nabla \Phi_t+\nabla \frac{\delta E}{\delta \rho_t}\ra\rho_tdx\\
&-\frac{3\sqrt{\beta}}{2}\int  \|\nabla \Phi_t\|^2\rho_tdx+\sqrt{\beta}\int  \la T_t(x)-x,\nabla \Phi_t\ra\nabla \cdot(\rho_t\nabla \Phi_t) dx\\
=&-\sqrt{\beta}\int  \la \p_tT_t+ \nabla T_t\nabla \Phi_t, \nabla \Phi_t\ra\rho_t dx-\frac{\sqrt{\beta}}{2}\int  \|\nabla \Phi_t\|^2\rho_tdx\\
&-\sqrt{\beta}\int  \la T_t(x)-x,\p_t\nabla \Phi_t+\nabla\frac{\delta E}{\delta \rho}+\frac{1}{2}\nabla \|\nabla\Phi_t\|^2\ra\rho_tdx\\
\leq &-\frac{\sqrt{\beta}}{2}\int  \|\nabla \Phi_t\|^2\rho_tdx+2\beta\int \la T_t(x)-x,\nabla \Phi_t\ra\rho_tdx.\\
\end{aligned}
\end{equation}}

Substituting \eqref{equ:idpt2} and \eqref{equ:sqrtb} into \eqref{equ:mce_1} gives
$$
\begin{aligned}
&\dot\mcE(t)e^{-\sqrt{\beta}t}+\frac{\sqrt{\beta}}{2}\int  \|\nabla \Phi_t\|^2\rho_tdx\\
\leq &\beta \int  \la \p_t T_t, T_t(x)-x\ra \rho_tdx-\frac{\beta}{2}\int  \|T_t(x)-x\|^2\nabla \cdot(\rho_t\nabla \Phi_t) dx\\
&-\beta \int  \la T_t(x)-x,\nabla \Phi_t\ra\rho_tdx+2\beta\int \la T_t(x)-x,\nabla \Phi_t\ra\rho_tdx\\
=&\beta \int  \la \p_t T_t+\nabla T_t\nabla\Phi_t, T_t(x)-x\ra \rho_tdx=0,
\end{aligned}
$$
where the last equality uses \eqref{equ:tttt2}. In summary, we have
$$
\dot\mcE(t)e^{-\sqrt{\beta}t}\leq-\frac{\sqrt{\beta}}{2}\int  \|\nabla \Phi_t\|^2\rho_tdx\leq 0.
$$
\textbf{Proof of Proposition 5.} Differentiating $\mcE(t)$ w.r.t. $t$, we compute that
\begin{equation}\label{equ:et00}
\begin{aligned}
&\dot \mcE(t) \\
=&\int  \la \p_t T_t, T_t(x)-x\ra \rho_t dx-\frac{1}{2}\int  \|T_t(x)-x\|^2\nabla\cdot(\rho_t\nabla\Phi_t)dx\\
&-\int  \la \p_t T_t, \frac{t}{2}\nabla \Phi_t\ra\rho_tdx-\int \la T_t(x)-x,\frac{1}{2}\nabla\Phi_t+\frac{t}{2}\p_t\nabla\Phi_t\ra\rho_tdx\\
&+\int \la T_t(x)-x, \frac{t}{2}\nabla\Phi_t\ra\nabla\cdot(\rho_t\nabla\Phi_t)dx+\int \la \frac{t}{2}\nabla\Phi_t, \frac{1}{2}\nabla\Phi_t+\frac{t}{2}\p_t\nabla\Phi_t\ra\rho_tdx\\
&-\frac{1}{2}\int  \left\|\frac{t}{2}\nabla\Phi_t\right\|^2\nabla\cdot(\rho_t\nabla\Phi_t)dx-\frac{t^2}{4}\int \frac{\delta E}{\delta \rho_t}\nabla\cdot(\rho_t\nabla\Phi_t)dx+\frac{t}{2}(E(\rho_t)-E(\rho^*)).\\
\end{aligned}
\end{equation}

Because $E(\rho)$ is Hess($0$), Proposition \ref{prop:geo} yields
\begin{equation}\label{equ:hess0}
E(\rho_t)=E(\rho_t)-E(\rho^*)\leq-\int \la T_t(x)-x, \nabla\frac{\delta E}{\delta \rho_t}\ra\rho_t dx.
\end{equation}

Utilizing the inequality \eqref{equ:hess0} and substituting the expressions of terms involving $\p_t T_t$ and $\nabla\cdot (\rho_t\nabla \Phi_t)$ in \eqref{equ:et00} with the expressions in \eqref{equ:tttt1} \eqref{equ:tttt2} and \eqref{equ:idpt1} \eqref{equ:idpt2}, we obtain
\begin{equation}\label{equ:et_tmp1}
\begin{aligned}
\dot \mcE(t) \leq &-\int \la \nabla \Phi_t, \nabla T_t(x)(T_t(x)-x)\ra\rho_tdx+\int \la T_t(x)-x,\nabla T_t\nabla \Phi_t\ra\rho_tdx\\
&-\int \la T_t(x)-x,\nabla \Phi_t\ra\rho_tdx+\frac{t}{2}\int \la \nabla \Phi_t, \nabla T_t\nabla\Phi_t\ra\rho_tdx\\
&-\frac{1}{2}\int \la T_t(x)-x, \nabla\Phi_t\ra\rho_tdx-\frac{t}{2}\int \la \p_t\nabla \Phi_t, T_t(x)-x\ra\rho_tdx\\
&-\frac{t}{4}\int  \la T_t(x)-x,\nabla\|\nabla \Phi_t\|^2\ra\rho_t dx-\frac{t}{2}\int \la \nabla \Phi_t, \nabla T_t\nabla \Phi_t\ra\rho_t dx\\
&+\frac{t}{2}\int  \|\nabla \Phi_t\|^2\rho_t dx+\frac{t}{4}\int \|\nabla\Phi_t\|^2\rho_tdx+\frac{t^2}{4}\int \la \nabla\Phi_t, \p_t\nabla\Phi_t\ra\rho_tdx\\
&+\frac{t^2}{8}\int \la\nabla\Phi_t,\nabla\|\nabla\Phi_t\|^2\ra\rho_tdx+\frac{t^2}{4}\int \la \nabla \Phi_t, \nabla \frac{\delta E}{\delta \rho_t}\ra\rho_tdx\\
&-\frac{t}{2}\int \la T_t(x)-x, \nabla\frac{\delta E}{\delta \rho_t}\ra\rho_t dx.
\end{aligned}
\end{equation}
The expression of \eqref{equ:et_tmp1} can be reformulated into
{\small
$$
\begin{aligned}
\dot \mcE(t)\leq &-\frac{3}{2}\int \la T_t(x)-x,\nabla\Phi_t\ra\rho_tdx+\frac{3t}{4}\int \|\nabla\Phi_t\|^2\rho_tdx\\
&-\frac{t}{2}\int \la T_t(x)-x, \p_t\nabla \Phi_t+\frac{1}{2}\nabla\|\nabla\Phi_t\|^2+\nabla \frac{\delta E}{\delta \rho_t}\ra\rho_tdx\\
&+\frac{t^2}{4}\int \la \nabla\Phi_t, \p_t\nabla\Phi_t+\frac{1}{2}\nabla\|\nabla\Phi_t\|^2+\nabla \frac{\delta E}{\delta \rho_t}\ra\rho_tdx.
\end{aligned}
$$}
From (W-AIG) with $\alpha_t=3/t$, we have the following equalities.
{\small
$$
\begin{aligned}
&\frac{t^2}{4}\int\la \nabla\Phi_t, \p_t\nabla\Phi_t+\frac{1}{2}\nabla\|\nabla\Phi_t\|^2+\nabla \frac{\delta E}{\delta \rho_t}\ra\rho_tdx=-\frac{3t}{4}\int \|\nabla\Phi_t\|^2\rho_tdx,
\end{aligned}
$$
$$
\begin{aligned}
&-\frac{t}{2}\int \la T_t(x)-x, \p_t\nabla \Phi_t+\frac{1}{2}\nabla\|\nabla\Phi_t\|^2+\nabla \frac{\delta E}{\delta \rho_t}\ra\rho_tdx = \frac{3}{2}\int \la T_t(x)-x,\nabla\Phi_t\ra\rho_tdx.
\end{aligned}
$$}
As a result, $\dot\mcE(t)\leq 0$. This completes the proof. 

\subsection{Comparison with the proof in \cite{affpd}}
\label{app:cmp}
The accelerated flow in \citep{affpd} is given by 
\begin{equation}\label{equ:agf}
    \frac{d X_t}{dt} = e^{\alpha_t-\gamma_t}Y_t,\quad \frac{d Y_t}{dt} = -e^{\alpha_t+\beta_t+\gamma_t}\nabla \lp\frac{\delta E}{\delta_{\rho_t}}\rp(X_t).
\end{equation}
Here the target distribution satisies $\rho_\infty(x)=\rho^*(x)\propto \exp(-f(x))$. Suppose that we take $\alpha_t = \log p-\log t$, $\beta_t = p\log t +\log C$ and $\gamma_t = p\log t$. Here we specify $p=2$ and $C=1/4$. Then the accelerated flow \eqref{equ:agf} recovers the particle formulation of W-AIG flows if we replace $Y_t$ by $2t^{-3}V_t$. The Lyapunov function in \citep{affpd} follows
\begin{equation*}
\begin{aligned}
V(t) =& \frac{1}{2}\mbE\bb{\|X_t+e^{-\gamma_t}Y_t-T_{\rho_t}^{\rho^*}(X_t)\|^2}+e^{\beta_t}(E(\rho)-E(\rho^*))\\
=&\frac{1}{2}\mbE\bb{\|X_t+\frac{t}{2}V_t-T_{\rho_t}^{\rho^*}(X_t)\|^2}+\frac{t^2}{4}(E(\rho_t)-E(\rho^*))\\
=&\frac{1}{2}\int  \left\| - (T_t(x)-x)+\frac{t}{2} \nabla \Phi_t(x)\right\|^2\rho_t(x) dx+\frac{t^2}{4}(E(\rho_t)-E(\rho^*)).
\end{aligned}
\end{equation*}
The last equality is based on the fact that $V_t = \nabla \Phi_t(X_t)$ and $T_t=T_{\rho_t}^{\rho^*}$ is the optimal transport plan from $\rho_t$ to $\rho^*$. This indicates that the Lyapunov function in \citep{affpd} is identical to ours. The technical assumption in \citep{affpd} follows 
\begin{equation*}
    \begin{aligned}
    0=&\mbE \lb \lp X_t+e^{-\gamma_t}Y_t-T_{\rho_t}^{\rho^*}(X_t)\rp\cdot\frac{d}{dt}T_{\rho_t}^{\rho^*}(X_t)\rb\\
    =&\mbE \lb \lp X_t+\frac{t}{2}V_t-T_t(X_t)\rp \cdot\frac{d}{dt}T_t(X_t)\rb\\
    =&\mbE \lb \lp X_t+\frac{t}{2}V_t-T_t(X_t)\rp \cdot\lp (\p_tT_t)(X_t)+\nabla T_tV_t\rp\rb\\
    =&\int \la x-T_t(x)+\frac{t}{2}\nabla \Phi_t(x),\p_tT_t+\nabla T_t \nabla \Phi_t\ra \rho_tdx.
    \end{aligned}
\end{equation*}
Based on $\p_tT_t=-\nabla T_tu_t$ and Lemma \ref{lem:vt}, we have
\begin{equation*}
\begin{aligned}
    &\int \la x-T_t(x),\p_tT_t+\nabla T_t \nabla \Phi_t\ra \rho_tdx \\
    =& \int \la x-T_t(x),\nabla T_t (\nabla \Phi_t-u_t)\ra \rho_tdx=0.
\end{aligned}
\end{equation*}
\begin{equation*}
\begin{aligned}
&\int \la \nabla \Phi_t,\p_tT_t+\nabla T_t \nabla \Phi_t\ra \rho_tdx \\
=& \int \la \nabla \Phi_t,\nabla T_t(\nabla \Phi_t-u_t)\ra\rho_tdx\\
=&\int  \la \nabla \Phi_t-u_t,\nabla T_t(\nabla \Phi_t-u_t)\ra\rho_tdx\geq 0.
\end{aligned}
\end{equation*}
As a result, we have
$$
\begin{aligned}
&\mbE \lb \lp X_t+e^{-\gamma_t}Y_t-T_{\rho_t}^{\rho_\infty}(X_t)\rp\cdot\frac{d}{dt}T_{\rho_t}^{\rho_\infty}(X_t)\rb \\
=& \frac{t}{2}\int  \la \nabla \Phi_t-u_t,\nabla T_t(\nabla \Phi_t-u_t)\ra\rho_tdx\geq 0.
\end{aligned}
$$ 
In 1-dimensional case, because $\nabla\cdot \lp\rho_t(u_t-\nabla\Phi_t)\rp=0$ indicates that $\rho_t(u_t-\nabla\Phi_t)=0$. For $\rho_t(x)>0$, we have $u_t(x)-\nabla\Phi_t(x) = 0$. So the technical assumption holds. In general cases, although $u_t = \p_t(T_t)^{-1}\circ T_t$ satisfies $\nabla\cdot \lp\rho_t(u_t-\nabla\Phi_t)\rp=0$, but this does not necessary indicate that $u_t=\nabla\Phi_t$. Hence, $\mbE \lb \lp X_t+e^{-\gamma_t}Y_t-T_{\rho_t}^{\rho_\infty}(X_t)\rp\cdot\frac{d}{dt}T_{\rho_t}^{\rho_\infty}(X_t)\rb =0$ does not necessary hold except for 1-dimensional case.
\section{Proof of convergence rate under Fisher-Rao metric}
In this section, we present proofs of propositions in Section 4 under Fisher-Rao metric.

\subsection{Geodesic curve under the Fisher-Rao metric}
We first investigate on the explicit solution of geodesic curve under the Fisher-Rao metric in probability space. The geodesic curve shall satisfy
\begin{equation}\label{geo:fr}
\lbb{&\p_t\rho_t-(\Phi_t-\mbE_{\rho_t}[\Phi_t])\rho_t=0,\\
&\p_t\Phi_t+\frac{1}{2}\Phi_t^2-\mbE_{\rho_t}[\Phi_t]\Phi_t=0.}
\end{equation}
with initial values $\rho_t|_{t=0}=\rho_0$ and $\Phi_t|_{t=0}=\Phi_0$. The Hamiltonian follows
$$
\mcH(\rho,\Phi) = \frac{1}{2}(\mbE_{\rho_t}[\Phi_t^2]-\pp{\mbE_{\rho_t}[\Phi_t]}^2).
$$
We reparametrize $\rho_t$ by $\rho_t= R_t^2$ with $R_t>0$ and $\int R_t^2 dx=1$. Then, 
$$
\lbb{&\p_tR_t-\frac{1}{2}(\Phi_t-\mbE_{R_t^2}[\Phi_t])R_t=0,\\
&\p_t\Phi_t+\frac{1}{2}\Phi_t^2-\mbE_{R_t^2}[\Phi_t]\Phi_t=0.}
$$
\begin{proposition}
The solution to \eqref{geo:fr} with initial values $\rho_t|_{t=0}=\rho_0$ and $\Phi_t|_{t=0}=\Phi_0$ follows
\begin{equation}\label{geo:fr_sol}
R(x,t) = A(x)\sin(Ht)+B(x)\cos(Ht),
\end{equation}
where
\begin{equation}\label{geo:fr_para}
A(x) = \frac{1}{2H} R_0(x)(\Phi_0(x)-\mbE_{R_0^2}[\Phi_0]),\quad B(x)=R_0(x),
\end{equation}
and
$$
H=\frac{1}{2}\sqrt{\mbE_{R_0^2}[\Phi_0^2]-\pp{\mbE_{R_0^2}[\Phi_0]}^2}.
$$
We also have $\int R_t^2 dx=1$ for $t\geq 0$. 
\end{proposition}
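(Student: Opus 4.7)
My strategy is to reduce the coupled system \eqref{geo:fr} to a linear second-order ODE for $R_t$ alone by exploiting the Hamiltonian conservation law built into the flow. Geometrically, this reflects the classical fact that in the coordinates $R=\sqrt{\rho}$, the Fisher-Rao probability simplex becomes (the positive portion of) the unit sphere in $L^2(\Omega)$, so its geodesics are great circles of the form $A\sin(Ht)+B\cos(Ht)$ with $A$ and $B$ orthonormal.

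The first step is to introduce the centered potential $\tilde\Phi_t := \Phi_t - \mbE_{\rho_t}[\Phi_t]$, which rewrites the first equation of \eqref{geo:fr} as $\p_t R_t = \frac{1}{2}\tilde\Phi_t R_t$. I would then compute $\frac{d}{dt}\mbE_{\rho_t}[\Phi_t]$ by using both equations of \eqref{geo:fr} together with $\p_t\rho_t=\tilde\Phi_t\rho_t$; after the obvious cancellations one gets $\frac{d}{dt}\mbE_{\rho_t}[\Phi_t] = \frac{1}{2}\mbE_{\rho_t}[\Phi_t^2]$. Combining this with the equation for $\p_t \Phi_t$ yields the clean identity
\begin{equation*}
\p_t\tilde\Phi_t = -\frac{1}{2}\tilde\Phi_t^2 - \mcH(\rho_t,\Phi_t).
\end{equation*}
Since \eqref{geo:fr} is the (undamped) Hamiltonian flow associated with $\mcH$, conservation of energy gives $\mcH(\rho_t,\Phi_t) \equiv \mcH(\rho_0,\Phi_0) = 2H^2$, the last equality being exactly the definition of $H$.

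Differentiating $\p_t R_t = \frac{1}{2}\tilde\Phi_t R_t$ once more and substituting the ODE for $\tilde\Phi_t$, the $\tilde\Phi_t^2$ contributions cancel and one obtains
\begin{equation*}
\p_t^2 R_t = \frac{1}{2}(\p_t\tilde\Phi_t)R_t + \frac{1}{2}\tilde\Phi_t\p_t R_t = -H^2 R_t.
\end{equation*}
This is a pointwise simple-harmonic equation in $t$, whose general solution is $R(x,t) = A(x)\sin(Ht) + B(x)\cos(Ht)$. Matching the initial conditions $R|_{t=0} = R_0$ and $\p_t R|_{t=0} = \frac{1}{2}\tilde\Phi_0 R_0$ immediately recovers the formulas for $A$ and $B$ in \eqref{geo:fr_para}.

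Finally, the normalization $\int R_t^2 dx = 1$ follows from three identities at $t=0$: $\int B^2 dx = \int R_0^2 dx = 1$; $\int AB\, dx = \frac{1}{2H}\int(\Phi_0 - \mbE_{R_0^2}[\Phi_0])R_0^2 dx = 0$ (the built-in centering condition); and $\int A^2 dx = \frac{1}{4H^2}(\mbE_{R_0^2}[\Phi_0^2] - \mbE_{R_0^2}[\Phi_0]^2) = 1$ by the very definition of $H$. Expanding $R_t^2$ and integrating term by term collapses the sum to $\sin^2(Ht) + \cos^2(Ht) = 1$. The main technical step in this plan is the derivation of the ODE for $\tilde\Phi_t$ and the identification of the constant term with the conserved Hamiltonian; once that is in place, everything reduces to a linear scalar ODE at each $x$ and there is no need to solve the nonlinear Hamilton--Jacobi-type equation for $\Phi_t$ directly.
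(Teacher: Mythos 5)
Your proof is correct and follows essentially the same route as the paper: reduce to the pointwise second-order ODE $\p_{tt}R_t=-H^2R_t$ by exploiting conservation of the Fisher--Rao Hamiltonian, then solve the resulting simple-harmonic equation and verify $\int R_t^2\,dx=1$ via the orthonormality of $A$ and $B$. The paper arrives at the same equation by a brute-force expansion of $2\p_{tt}R_t$ and algebraic simplification, whereas your intermediate step of isolating the Riccati-type identity $\p_t\tilde\Phi_t=-\tfrac{1}{2}\tilde\Phi_t^2-\mcH$ for the centered potential is a somewhat cleaner way of organizing the same computation.
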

\begin{proof}
We can compute that
$$
\begin{aligned}
2\p_{tt}R_t=& \pp{\p_t\Phi_t-2\int R_t\Phi_t\p_t R_t dx-\mbE_{R_t^2}[\p_t\Phi_t]}R_t+\p_t R_t (\Phi_t-\mbE_{R_t^2}[\Phi_t])\\
=& \pp{-\frac{1}{2}\Phi_t^2+\frac{1}{2}\mbE_{R_t^2}[\Phi_t^2]+\mbE_{R_t^2}[\Phi_t]\Phi_t-\mbE_{R_t^2}[\Phi_t]^2}R_t\\
&-\mbE_{R_t^2}[\Phi_t(\Phi_t-\mbE_{R_t^2}[\Phi_t])] R_t+\frac{1}{2} R_t(\Phi_t-\mbE_{R_t^2}[\Phi_t])^2\\
=&\pp{-\frac{1}{2}\mbE_{R_t^2}[\Phi_t^2]+\frac{1}{2}\pp{\mbE_{R_t^2}[\Phi_t]}^2}R_t.
\end{aligned}
$$
In other words,
$$
\p_{tt} R_t = \pp{-\frac{1}{4}\mbE_{R_t^2}[\Phi_t^2]+\frac{1}{4}\mbE_{R_t^2}[\Phi_t]^2}R_t.
$$

We observe that $\frac{1}{2}\mbE_{R_t^2}[\Phi_t^2]-\frac{1}{2}\mbE_{R_t^2}[\Phi_t]^2=\mcH(\rho_t, \Phi_t)$ is the Hamiltonian, which is invariant along the geodesic curve. Denote
$$
H=\sqrt{\frac{1}{2}\mcH(\rho_t,\Phi_t)} = \frac{1}{2}\sqrt{\mbE_{R_0^2}[\Phi_0^2]-\pp{\mbE_{R_0^2}[\Phi_0]}^2}.
$$
Then, we have
$$
\p_{tt}R_t=-H^2R_t,
$$
which is a wave equation. We also notice that
$$
R_t(x)|_{t=0}=R_0(x),\quad \p_tR_t(x)|_{t=0}=R_0(x)(\Phi_0(x)-\mbE_{R_0^2}[\Phi_0]).
$$
Hence, $R_t$ is uniquely determined by
$$
R_t(x) = A(x)\sin(Ht)+B(x)\cos(Ht),
$$
where $A(x)$ and $B(x)$ are given in \eqref{geo:fr_para}. Finally, we verify that $\int R_t^2 dx=1$. Actually, we can compute that
$$
\int A^2(x)dx=\frac{1}{4H^2} \mbE_{R_0^2} [(\Phi_0(x)-\mbE_{R_0^2}[\Phi_0])^2]=1,
$$
$$
\int B^2(x)dx = \int R_0^2(x)dx=1,
$$
$$
\int A(x)B(x) dx = \frac{1}{2H} \mbE_{R_0^2}[\Phi_0(x)-\mbE_{R_0^2}[\Phi_0]]=0.
$$
Hence, 
$$
\begin{aligned}
&\int R_t(x)^2dx\\
 = &\sin^2(Ht)\int A^2(x)dx+\cos^2(Ht)\int B^2(x)dx+2\sin(Ht)\cos(Ht)\int A(x)B(x) dx\\
 =&1.
\end{aligned}
$$
\end{proof}

\begin{proposition}
Suppose that $\rho_0,\rho_1>0$, $\rho_0\neq \rho_1$. Then, there exists a geodesic curve $\rho(t)$ with $\rho_t|_{t=0}=\rho_0$ and $\rho_t|_{t=1}=\rho_1$. 
\end{proposition}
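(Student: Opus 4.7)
The plan is to build the geodesic explicitly by exploiting the fact, already visible in the preceding proposition, that under the reparametrization $\rho_t = R_t^2$ the Fisher-Rao geodesic equation reduces to the harmonic oscillator $\partial_{tt} R_t = -H^2 R_t$ on the unit $L^2$-sphere $\mathcal{S} = \{R \in \mcF(\Omega) : \int R^2 dx = 1\}$. Consequently every geodesic has the closed form $R_t = A\sin(Ht) + B\cos(Ht)$ where $(A,B)$ forms an orthonormal pair in $L^2(\Omega)$. Since $\rho_0,\rho_1>0$, their square roots $R_0 = \sqrt{\rho_0}$ and $R_1 = \sqrt{\rho_1}$ are strictly positive elements of $\mathcal{S}$, so the construction reduces to joining two unit vectors by a great-circle arc, a classical spherical-geometry statement.

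To make this precise I would set $B := R_0$, which enforces $R_t|_{t=0} = R_0$, and choose the angular speed $H := \arccos\!\lp\int R_0 R_1\, dx\rp$. Pointwise positivity of $R_0,R_1$ gives $\int R_0R_1\,dx > 0$, and the assumption $\rho_0 \neq \rho_1$ combined with Cauchy--Schwarz (equality would force $R_1 = \lambda R_0$ with $\lambda>0$ and $\|R_i\|_{L^2} = 1$, i.e.\ $R_0 = R_1$) yields $\int R_0 R_1\,dx < 1$; hence $H \in (0, \pi/2)$ and $\sin H \neq 0$. I then define
$$
A(x) := \frac{R_1(x) - \cos(H)\, R_0(x)}{\sin(H)}.
$$
A short computation gives $\int A R_0\, dx = 0$ and $\int A^2\, dx = (1-\cos^2 H)/\sin^2 H = 1$, so $(A, B)$ is orthonormal in $L^2$. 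The resulting curve $R_t := A\sin(Ht) + R_0\cos(Ht)$ satisfies $R_t|_{t=0} = R_0$ and $R_t|_{t=1} = A\sin H + R_0\cos H = R_1$, so $\rho_t := R_t^2$ is a candidate geodesic in $\mcP(\Omega)$ connecting $\rho_0$ to $\rho_1$, and $\int R_t^2\,dx = 1$ follows from the orthonormality of $(A,B)$ just as in the previous proposition.

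To certify $\rho_t$ as a genuine solution of the Fisher-Rao geodesic system \eqref{geo:fr}, I recover the initial momentum by inverting the relation $A = \frac{1}{2H} R_0 (\Phi_0 - \mbE_{R_0^2}[\Phi_0])$ supplied by the previous proposition. Setting $\Phi_0(x) := 2H A(x)/R_0(x)$ is legitimate because $R_0 > 0$ on $\Omega$, and the orthogonality $\int A R_0\,dx = 0$ automatically yields $\mbE_{R_0^2}[\Phi_0] = 0$, making the formula self-consistent. Plugging $(\rho_0, \Phi_0)$ with this $H$ into the explicit solution \eqref{geo:fr_sol}--\eqref{geo:fr_para} then reproduces exactly the $R_t$ constructed above, confirming that $\rho_t = R_t^2$ solves \eqref{geo:fr}.

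The main obstacle is not computational but ensuring the strict inequality $\int R_0 R_1\, dx < 1$, since this is precisely what prevents both the degenerate case $H = 0$ (no geodesic needed) and the singular case $\sin H = 0$ (ansatz breakdown). Both hypotheses are used here in an essential way: pointwise positivity $\rho_0,\rho_1 > 0$ places $R_0,R_1$ genuinely on the unit $L^2$-sphere and makes the inner product strictly positive, while $\rho_0 \neq \rho_1$ ensures the two positive unit vectors are not linearly dependent, so that Cauchy--Schwarz is strict. I would highlight this step in the write-up, since it is the only place where the hypotheses on $\rho_0,\rho_1$ enter.
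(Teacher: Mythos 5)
You follow exactly the paper's route: pass to $R_t=\sqrt{\rho_t}$, set $H=\arccos\bigl(\int R_0R_1\,dx\bigr)$, define $A=(R_1-\cos(H)R_0)/\sin H$, check orthonormality, and recover $\Phi_0=2HA/R_0$. In fact you are slightly \emph{more} careful than the paper in one place: the paper merely asserts $H\in(0,\pi/2]$, while you correctly use positivity to get $\int R_0R_1\,dx>0$ and strictness of Cauchy--Schwarz under $\rho_0\neq\rho_1$ to get $\int R_0R_1\,dx<1$, hence $H\in(0,\pi/2)$ and $\sin H\neq 0$.

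However, there is one step you skip that the paper does not: verifying that $R_t(x)>0$ for all $t\in[0,1]$. Your phrase that the construction ``reduces to joining two unit vectors by a great-circle arc, a classical spherical-geometry statement'' conflates the full unit $L^2$-sphere with the positive orthant $S^+(\Omega)=\{R:R>0,\ \int R^2\,dx=1\}$; the geodesic equation in the preceding proposition was derived under the reparametrization $\rho_t=R_t^2$ with $R_t>0$, and $\rho_t$ must remain in $\mcP^+(\Omega)$. A great-circle arc between two positive unit vectors need not stay in the positive orthant in general, so this requires an argument specific to the present situation. The paper supplies it by rewriting
$$
R_t(x)=A(x)\sin(Ht)+R_0(x)\cos(Ht)=\frac{\sin(Ht)\,R_1(x)+\sin(H(1-t))\,R_0(x)}{\sin H},
$$
and noting that for $t\in[0,1]$ and $H\in(0,\pi/2)$ both $\sin(Ht)$ and $\sin(H(1-t))$ are nonnegative with at least one strictly positive, while $R_0,R_1>0$, so $R_t>0$. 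You should include this check; without it you have only produced a curve on the $L^2$-sphere, not a curve inside the density manifold where the Fisher--Rao geodesic structure you invoke is defined.
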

\begin{proof}
We denote $R_0(x)=\sqrt{\rho_0(x)}$ and $R_1(x)=\sqrt{\rho_1(x)}$. We only need to solve $A(x)$ and $H>0$ such that
$$
R_1(x) = A(x)\sin(H)+R_0(x)\cos(H),
$$
We shall have
$$
\int R_1(x)R_0(x)dx  = \cos (H),
$$
which indicates $H=\cos^{-1} \pp{\int R_1(x)R_0(x)dx }\in (0,\pi/2]$. Hence, we have
$$
A(x) = \frac{R_1(x)-R_0(x)\cos(H)}{\sin (H)}.
$$
We can examine that
$$
\int A^2(x)dx = \frac{1-2\cos^2(H)+\cos^2(H)}{\sin^2(H)}=1.
$$
On the other hand, we shall examine that
$$
R_t(x)>0,\quad t\in [0,1].
$$
Indeed, 
$$
\begin{aligned}
R_t(x) =& A(x)\sin(Ht)+R_0(x)\cos(Ht)\\
=&\frac{\sin(Ht)(R_1(x)-R_0(x)\cos(H))+R_0(x)\cos(Ht)\sin(H)}{\sin (H)}\\
=&\frac{1}{\sin H} (\sin(Ht)R_1(x)+(\cos(Ht)\sin(H)-\sin(Ht)\cos(H))R_0(x))\\
=&\frac{1}{\sin H} (\sin(Ht)R_1(x)+\sin(H(1-t)) R_0(x))>0.
\end{aligned}
$$
Hence, $\rho_t(x)=R_t^2(x)$ is the geodesic curve. 
\end{proof}
A direct derivation is the Fisher-Rao distance between $\rho_0$ and $\rho_1$. Namely, we can recover $\Phi_0$ by
$$
\Phi_0(x) = \frac{2HA(x)}{R_0(x)}.
$$
We note that $\mcH(\rho_t,\Phi_0)=4H^2$. Hence, we have
$$
\pp{\mcD^{FR}(\rho_0,\rho_1)}^2 = \int_0^1 4H^2 dt = 4H^2.
$$
\begin{remark}
We note that the manifold $(\mcP^+(\Omega), \mcG^{FR}(\rho))$ is homeomorphic to the manifold $(S^+(\Omega), \mcG^{E}(R))$, where $S^+(\Omega)=\{R\in \mcF(\Omega):R>0,\int R^2 dx =1\}$. Here $(S^+(\Omega), \mcG^{E}(R))$ is the submanifold to $\mbL^2(\Omega)$ equiped with the standard Euclidean metric. 
\end{remark}

\subsection{Convergence analysis}
We consider accelerated Fisher-Rao gradient flows
\begin{equation}\label{aig:fr}
\lbb{&\p_t\rho_t-(\Phi_t-\mbE_{\rho_t}[\Phi_t])\rho_t=0,\\
&\p_t\Phi_t+\alpha_t\Phi_t+\frac{1}{2}\Phi_t^2-\mbE_{\rho_t}[\Phi_t]\Phi_t+\frac{\delta E}{\delta \rho_t}=0.}
\end{equation}
 % We first want to prove that $\rho_t>0$. 
In the sense of $R_t$, we have
\begin{equation}\label{aig:fr_rt}
\lbb{&\p_tR_t-\frac{1}{2}(\Phi_t-\mbE_{R_t^2}[\Phi_t])R_t=0,\\
&\p_t\Phi_t+\alpha_t\Phi_t+\frac{1}{2}\Phi_t^2-\mbE_{R^2_t}[\Phi_t]\Phi_t+\frac{\delta E}{\delta \rho_t}=0.}
\end{equation}
Then, we prove the convergence results for $\beta$-strongly convex $E(\rho)$. Here we take $\alpha_t=2\sqrt{\beta}$. Consider the Lyapunov function
$$
\begin{aligned}
\mcE(t) = &\frac{e^{\sqrt{\beta }t}}{2}\int |\Phi_t-\mbE_{R_t^2}[\Phi_t]-\sqrt{\beta} T_t|^2\rho_tdx\\
&+e^{\sqrt{\beta } t} (E(\rho_t)-E(\rho^*)).
\end{aligned}
$$
Here we define
$$
T_t(x) = \frac{2 H_t}{\sin (H_t)}\frac{R^*(x)-R_t(x)\cos(H_t)}{R_t(x)}, \quad H_t=\cos^{-1} \pp{\int R_t(x)R^*(x)dx }. 
$$
We can rewrite the Lyapunov function as
$$
\begin{aligned}
\mcE(t) = &\frac{e^{\sqrt{\beta }t}}{2}\int (\Phi_t-\mbE_{R_t^2}[\Phi_t])^2\rho_tdx-\sqrt{\beta}e^{\sqrt{\beta }t}\int (\Phi_t-\mbE_{R_t^2}[\Phi_t])T_t\rho_tdx\\
&+\frac{\beta e^{\sqrt{\beta }t}}{2}\int  T_t^2\rho_tdx+e^{\sqrt{\beta } t} (E(\rho_t)-E(\rho^*)).
\end{aligned}
$$
\begin{remark}
Here it may be problematic if $R_t(x)=0$ for some $x$. But in total, 
$$
\int T_t^2\rho_t dx = \int (R_tT_t)^2 dx. 
$$
is well-defined. 
\end{remark}
From the definition of convexity in probability space, we derive the following proposition.
\begin{proposition}\label{prop:cvx_fr}
The $\beta$-convexity of $E(\rho)$ indicates that
$$
E(\rho^*)\geq E(\rho_t)+\int \pp{\frac{\delta E}{\delta \rho_t}-\mbE_{\rho_t}\bb{\frac{\delta E}{\delta \rho_t}}}T_t\rho_tdx+\frac{\beta}{2}\int  T_t^2\rho_tdx. 
$$
\end{proposition}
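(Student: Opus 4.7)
The plan is a standard displacement-convexity argument along the Fisher-Rao geodesic $\rho_s$ connecting $\rho_t$ (at $s=0$) to $\rho^*$ (at $s=1$). From the explicit geodesic formula \eqref{geo:fr_sol}--\eqref{geo:fr_para}, one verifies that $T_t$ as defined is exactly the zero-mean representative of the initial cotangent vector $\Phi_s|_{s=0}$ of this geodesic; in particular $\mbE_{\rho_t}[T_t]=0$, which follows from the normalization $\int R_t R^*\,dx = \cos(H_t)$. Using the geodesic continuity equation $\partial_s\rho_s = (\Phi_s-\mbE_{\rho_s}[\Phi_s])\rho_s$ at $s=0$ gives $\partial_s\rho_s|_{s=0}=T_t\rho_t$, hence
$$\left.\frac{d}{ds}E(\rho_s)\right|_{s=0} = \int \frac{\delta E}{\delta\rho_t}\, T_t \rho_t\,dx = \int\left(\frac{\delta E}{\delta\rho_t} - \mbE_{\rho_t}\!\left[\frac{\delta E}{\delta\rho_t}\right]\right) T_t\rho_t\,dx,$$
where the last equality uses $\mbE_{\rho_t}[T_t]=0$.

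Next I would bound the second derivative along the geodesic using $\beta$-convexity. Since $\rho_s$ is a geodesic, the covariant acceleration vanishes, so the second derivative of $E$ along $\rho_s$ equals the Hessian pairing, and Definition \ref{def:hess} gives
$$\frac{d^2}{ds^2}E(\rho_s) = g_{\rho_s}\!\left(\Hess E(\rho_s)\partial_s\rho_s,\, \partial_s\rho_s\right) \;\ge\; \beta\, g_{\rho_s}(\partial_s\rho_s, \partial_s\rho_s).$$
The Fisher-Rao kinetic energy $\tfrac{1}{2} g_{\rho_s}(\partial_s\rho_s, \partial_s\rho_s) = \mcH^F(\rho_s,\Phi_s)$ is conserved along the free geodesic \eqref{geo:fr}. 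Evaluating at $s=0$, and using that $\sigma = T_t\rho_t = G^F(\rho_t)^{-1}T_t$ (which holds precisely because $\mbE_{\rho_t}[T_t]=0$), we obtain $g_{\rho_s}(\partial_s\rho_s,\partial_s\rho_s) = \int T_t^2 \rho_t\,dx$ for every $s\in[0,1]$. Therefore $\tfrac{d^2}{ds^2}E(\rho_s) \ge \beta \int T_t^2 \rho_t\,dx$ uniformly in $s$. Applying Taylor's theorem with integral remainder on $[0,1]$,
$$E(\rho^*) = E(\rho_t) + \left.\frac{d}{ds}E(\rho_s)\right|_{s=0} + \int_0^1 (1-s)\,\frac{d^2}{ds^2}E(\rho_s)\,ds,$$
and combining with the first-variation formula yields the claimed inequality, since $\int_0^1 (1-s)\,ds = \tfrac{1}{2}$.

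The main obstacle is the clean identification of $T_t$ as the zero-mean cotangent representative along the Fisher-Rao geodesic and the appeal to Hamiltonian conservation to transport the kinetic norm from $s=0$ to every $s\in[0,1]$. The two key verifications are (i) $\mbE_{\rho_t}[T_t]=0$ via $\int R_t R^*\,dx=\cos(H_t)$, and (ii) the conservation of $\mcH^F(\rho_s,\Phi_s)$ along \eqref{geo:fr}. Both are essentially already established in the geodesic analysis of the preceding subsection; once they are in hand, the remainder is standard Taylor expansion.
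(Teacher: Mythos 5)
Your argument is correct and follows exactly the approach the paper intends: the paper states Proposition \ref{prop:cvx_fr} as a ``direct result'' of Definition \ref{def:hess} (i.e., $\beta$-displacement convexity along Fisher--Rao geodesics) without writing out the steps, and you have supplied precisely those steps — identifying $T_t$ as the zero-mean initial cotangent vector of the geodesic from $\rho_t$ to $\rho^*$, computing the first variation, invoking geodesic convexity plus conservation of $\mcH^F$ to bound the second derivative uniformly in $s$, and closing with Taylor's integral remainder. All intermediate claims (that $\int T_t\rho_t\,dx=0$ follows from $\int R_t R^*\,dx=\cos H_t$, that $\partial_s\rho_s|_{s=0}=T_t\rho_t$, and that $g_{\rho_s}(\partial_s\rho_s,\partial_s\rho_s)=\int T_t^2\rho_t\,dx$ for all $s$) check out against the geodesic formulas \eqref{geo:fr_sol}--\eqref{geo:fr_para}.
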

For simplicity, we define
$$
\mcF_t[\Psi] = \Psi-\mbE_{R_t^2}[\Psi]. 
$$
We have
$$
\begin{aligned}
\p_t(\mcF_t[\Psi]) = &\p_t\Psi - \mbE_{R_t^2}[\p_t\Psi]-\int R_t^2 \mcF_t[\Phi_t] \Psi dx=\mcF_t[\p_t\Psi] -\int R_t^2 \mcF_t[\Phi_t] \Psi dx.
\end{aligned}
$$
Before we perform computations, we establish several identities.
$$
\int \mcF_t[\Psi] R_t^2dx = 0.
$$
$$
\int \mcF_t[\Psi_1]\mcF_t[\Psi_2] R_t^2dx=\int \mcF_t[\Psi_1]\Psi_2 R_t^2dx=\int \mcF_t[\Psi_2]\Psi_1 R_t^2dx.
$$

%$$
%\p_t \log \rho_t = 2\p_t\log R_t = \mcF_t[\Phi_t]. 
%$$
%$$
%\begin{aligned}
%\p_t \mcF_t[f+\log \rho_t] = &\mcF_t[\p_t\log \rho_t] -\int R_t^2 \mcF_t[\Phi_t] (f+\log \rho_t) dx\\
%=& \mcF_t[\Phi_t] -\int R_t^2 \mcF_t[\Phi_t] (f+\log \rho_t) dx. 
%\end{aligned}
%$$

\begin{lemma}\label{lem:obsv}
We have the following observations:
\begin{equation}
\int (\p_t T_t)\mcF_t[\Phi_t] R_t^2 dx+\frac{1}{2} \int T_t (\mcF_t[\Phi_t])^2R_t^2dx
\geq - \int (\mcF_t[\Phi_t] )^2 R_t^2dx,
\end{equation}
\begin{equation}
\int (\p_t T_t )T_t R_t^2 dx=-\int T_t \Phi_t R_t^2 dx-\frac{1}{2}\int T_t^2 \mcF_t[\Phi_t] R_t^2 dx.
\end{equation}
\end{lemma}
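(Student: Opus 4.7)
The plan is to parametrize $\rho_t = R_t^2$ and exploit the explicit formula for $T_t$. From the Fisher-Rao AIG dynamics we have $\p_t R_t = \frac{1}{2}\mcF_t[\Phi_t] R_t$, hence $\p_t(R_t^2) = \mcF_t[\Phi_t] R_t^2$. Setting $C_t := \cos H_t = \int R_t R^* dx$ and $\sigma_t := \int R_t R^* \mcF_t[\Phi_t] dx$, we get $\p_t C_t = \frac{1}{2}\sigma_t$ and $\p_t H_t = -\sigma_t/(2\sin H_t)$. Writing $\phi_t := 2H_t/\sin H_t$ so that $T_t = \phi_t(R^*/R_t - C_t)$, the normalizations $\int R_t^2 dx = \int (R^*)^2 dx = 1$ together with $\int R_t R^* dx = C_t$ yield the key identities
\[
\int T_t R_t^2 dx = 0, \quad \int T_t \Phi_t R_t^2 dx = \int T_t \mcF_t[\Phi_t] R_t^2 dx = \phi_t \sigma_t, \quad \int T_t^2 R_t^2 dx = 4 H_t^2.
\]

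For the second (equality) statement, apply $\p_t$ to $\int T_t^2 R_t^2 dx = 4H_t^2$ and use $\p_t(R_t^2) = \mcF_t[\Phi_t] R_t^2$ to obtain
\[
2\int T_t\,\p_t T_t\, R_t^2 dx + \int T_t^2 \mcF_t[\Phi_t] R_t^2 dx = 8 H_t \p_t H_t = -\frac{4 H_t \sigma_t}{\sin H_t} = -2\phi_t \sigma_t.
\]
Dividing by $2$ and replacing $\phi_t \sigma_t = \int T_t \Phi_t R_t^2 dx$ produces the claimed identity.

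For the first (inequality) statement, differentiate $T_t = \phi_t(R^*/R_t - C_t)$ using $\p_t R_t/R_t = \frac{1}{2}\mcF_t[\Phi_t]$ to obtain
\[
\p_t T_t = \frac{\p_t\phi_t}{\phi_t}T_t - \frac{1}{2}T_t \mcF_t[\Phi_t] - \frac{\phi_t C_t}{2}\mcF_t[\Phi_t] - \phi_t \p_t C_t.
\]
Integrating against $\mcF_t[\Phi_t] R_t^2$ kills the last term (since $\int \mcF_t[\Phi_t] R_t^2 dx = 0$), and adding $\frac{1}{2}\int T_t \mcF_t[\Phi_t]^2 R_t^2 dx$ cancels the $-\frac{1}{2}T_t \mcF_t[\Phi_t]$ piece. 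Substituting $\p_t\phi_t = (H_t\cos H_t - \sin H_t)\sigma_t/\sin^3 H_t$ and $\phi_t C_t/2 = H_t\cos H_t/\sin H_t$, the desired bound rearranges to
\[
\frac{\sin H_t - H_t\cos H_t}{\sin H_t}\left(\int R_t^2 \mcF_t[\Phi_t]^2 dx - \frac{\sigma_t^2}{\sin^2 H_t}\right) \geq 0.
\]
Since $R_t, R^* \geq 0$ forces $H_t \in [0,\pi/2]$, one has $\sin H_t - H_t\cos H_t \geq 0$ (zero at $0$, with nonnegative derivative $H\sin H$), so it suffices to verify the Cauchy--Schwarz inequality $\sigma_t^2 \leq \sin^2 H_t \int R_t^2 \mcF_t[\Phi_t]^2 dx$. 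Rewriting $\sigma_t = \int R_t (R^* - C_t R_t) \mcF_t[\Phi_t] dx$ (valid since $\int R_t^2 \mcF_t[\Phi_t] dx = 0$) and using $\int (R^* - C_t R_t)^2 dx = 1 - C_t^2 = \sin^2 H_t$ delivers the bound.

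The main obstacle is the sign bookkeeping in this last step: both $(\p_t \phi_t) \sigma_t$ and $(1 - \phi_t C_t/2) \int \mcF_t[\Phi_t]^2 R_t^2 dx$ carry the factor $\sin H_t - H_t\cos H_t$ and must be grouped to expose a manifestly nonnegative product before Cauchy--Schwarz can be invoked; a minor subtlety is that $R_t$ may vanish at isolated points, but all integrands of interest reduce to products of the smooth object $R_t T_t$ with other smooth functions.
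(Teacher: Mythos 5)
Your proof is correct, and for the first inequality it follows essentially the paper's route: expand $\p_t T_t$, integrate against $\mcF_t[\Phi_t]R_t^2$, regroup the factor $\sin H_t - H_t\cos H_t$, and close with the Cauchy--Schwarz bound $\sigma_t^2 \le \sin^2 H_t \int (\mcF_t[\Phi_t])^2R_t^2\,dx$ (the paper phrases this as $\bigl(\int \mcF_t[R^*R_t^{-1}]\mcF_t[\Phi_t]R_t^2\,dx\bigr)^2 \le \int (\mcF_t[\Phi_t])^2R_t^2\,dx\cdot\int(\mcF_t[R^*R_t^{-1}])^2R_t^2\,dx$, which after simplification is the same inequality). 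Where you genuinely depart from the paper is the second identity: the paper substitutes its explicit formula for $\p_t T_t$ into $\int(\p_t T_t)T_tR_t^2\,dx$ and simplifies several terms, whereas you differentiate the conserved quantity $\int T_t^2R_t^2\,dx = 4H_t^2$ in $t$ and read off the identity in two lines. That is a cleaner and less error-prone route to the same conclusion, since it sidesteps the coefficient bookkeeping entirely. A minor stylistic gain in your version is the introduction of the shorthand $C_t$, $\sigma_t$, $\phi_t$, which makes the structure of the computation more transparent, but buys nothing mathematically new for the inequality.
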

\begin{proof}
We note that 
$$\int T_t^2R_t^2 dx = 4H_t^2,$$
and
$$
\int (\mcF_t[R^*R_t^{-1}])^2 R_t^2 dx=\frac{\sin^2(H_t)}{4H_t^2}\int T_t^2R_t^2 dx=\sin(H_t^2).
$$
We compute the derivatives as follows:
$$
\begin{aligned}
\p_t H_t=&-\frac{1}{\sin H_t}\p_t{\int R_tR^*dx}=-\frac{1}{2\sin H_t}\int R_tR^*\mcF_t[\Phi_t] dx.
\end{aligned}
$$
$$
\begin{aligned}
\p_t T_t = &-\frac{1}{\sin H_t}\pp{\int R_tR^*\mcF_t[\Phi_t] dx}\frac{\sin(H_t)-H_t\cos(H_t) }{\sin^2(H_t)}(R^*R_t^{-1}-\cos(H_t))\\
&+\frac{2H_t}{\sin(H_t)}\pp{-\frac{1}{2}R^*R_t^{-1}\mcF_t[\Phi_t]-\frac{1}{2}\int R_tR^*\mcF_t[\Phi_t] dx}\\
= &-\frac{1}{\sin H_t}\pp{\int R^* R_t\mcF_t[\Phi_t] dx}\frac{\sin(H_t)-H_t\cos(H_t) }{\sin^2(H_t)}\mcF_t[R^* R_t^{-1}]\\
&-\frac{H_t}{\sin(H_t)}\pp{R^*R_t^{-1}\mcF_t[\Phi_t]+\int R_tR^*\mcF_t[\Phi_t] dx}.
\end{aligned}
$$

For the first inequality, we have
$$
\begin{aligned}
&\int( \p_t T_t)\mcF_t[\Phi_t] R_t^2 dx\\
=&-\frac{1}{\sin (H_t)}\pp{\int R^* R_t\mcF_t[\Phi_t] dx}\frac{\sin(H_t)-H_t\cos(H_t) }{\sin^2(H_t)}\int \mcF_t[R^* R_t^{-1}] \mcF_t[\Phi_t] R_t^2 dx\\
&-\frac{H_t}{\sin(H_t)}\int (R^*R_t^{-1}\mcF_t[\Phi_t]) \mcF_t[\Phi_t] R_t^2 dx\\
=&-\frac{\sin(H_t)-H_t\cos(H_t)}{\sin^3(H_t)}\pp{\int \mcF_t[R^* R_t^{-1}]\mcF_t[\Phi_t]  R_t^2dx}^2\\
&-\frac{1}{2}\frac{2H_t}{\sin(H_t)}\int R^*R_t^{-1}\mcF_t[\Phi_t] \mcF_t[\Phi_t] R_t^2 dx\\
\geq&-\frac{\sin(H_t)-H_t\cos(H_t)}{\sin^3(H_t)}\pp{\int (\mcF_t[\Phi_t])^2 R_t^2 dx}\pp{\int (\mcF_t[R^*R_t^{-1}])^2 R_t^2 dx}\\
&-\frac{1}{2}\frac{2H_t}{\sin(H_t)}\int (R^*R_t^{-1}-\cos(H_t))(\mcF_t[\Phi_t])^2R_t^2dx\\
&-\frac{1}{2}\frac{2H_t}{\sin(H_t)}\int \cos(H_t)(\mcF_t[\Phi_t])^2R_t^2dx\\
=& -\frac{\sin(H_t)-H_t\cos(H_t)}{\sin (H_t)}\pp{\int (\mcF_t[\Phi_t])^2 R_t^2 dx}-\frac{1}{2} \int T_t (\mcF_t[\Phi_t])^2R_t^2dx\\
&-\frac{H_t\cos(H_t)}{\sin(H_t)}\int R_t^2(\mcF_t[\Phi_t])^2 dx\\
=&-\frac{1}{2} \int T_t (\mcF_t[\Phi_t])^2R_t^2dx-\int (\mcF_t[\Phi_t])^2 R_t^2 dx.
\end{aligned}
$$
The inequality is based on Cauchy inequality. 
For the second inequality, we have
$$
\begin{aligned}
&\int( \p_t T_t) T_t R_t^2 dx\\
=&-\frac{1}{\sin H_t}\pp{\int R^* R_t\mcF_t[\Phi_t] dx}\frac{\sin(H_t)-H_t\cos(H_t) }{\sin^2(H_t)}\int T_t\mcF_t[R^* R_t^{-1}] R_t^2 dx\\
&-\frac{H_t}{\sin(H_t)}\int T_t R^*R_t^{-1} \mcF_t[\Phi_t] R_t^2 dx\\
=&-\frac{1}{\sin H_t}\pp{\int R^* R_t\mcF_t[\Phi_t] dx}\frac{\sin(H_t)-H_t\cos(H_t) }{2\sin(H_t) H_t}\int T_t^2R_t^2 dx\\
&-\frac{1}{2}\frac{2H_t}{\sin(H_t)}\int(R^* R_t -\cos(H_t)) T_t \mcF_t[\Phi_t] R_t^2 dx-\frac{1}{2}\frac{2H_t\cos(H_t)}{\sin(H_t)}\int T_t \mcF_t[\Phi_t] R_t^2 dx\\
=&-\frac{1}{2 H_t}\pp{\int T_t \Phi_t R_t^2 dx}\frac{\sin(H_t)-H_t\cos(H_t) }{2\sin(H_t) H_t}\int T_t^2R_t^2 dx\\
&-\frac{1}{2}\int T_t^2 \mcF_t[\Phi_t] R_t^2 dx-\frac{H_t\cos(H_t)}{\sin(H_t)}\int T_t \Phi_t R_t^2 dx\\
=&-\pp{\frac{\sin(H_t)-H_t\cos(H_t) }{\sin (H_t)}+\frac{H_t\cos(H_t)}{\sin(H_t)}}\int T_t \Phi_t R_t^2 dx-\frac{1}{2}\int T_t^2 \mcF_t[\Phi_t] R_t^2 dx\\
=&-\int T_t \Phi_t R_t^2 dx-\frac{1}{2}\int T_t^2 \mcF_t[\Phi_t] R_t^2 dx.
\end{aligned}
$$
This completes the proof. 
\end{proof}
Hence, we can compute that
$$
\begin{aligned}
e^{-\sqrt{\beta }t}\p_t\mcE(t) = &\frac{\sqrt{\beta}}{2}\int (\mcF_t[\Phi_t])^2R_t^2dx+\int \mcF_t[\Phi_t]\pp{\mcF_t[\p_t\Phi_t]-\int R_t^2 \mcF_t[\Phi_t] \Phi_t dx} R_t^2 dx\\
&+\frac{1}{2}\int (\mcF_t[\Phi_t])^2 \mcF_t[\Phi_t] R_t^2 dx-\beta\int (\Phi_t-\mbE_{R_t^2}[\Phi_t])T_t\rho_tdx\\
&-\sqrt{\beta}\int \pp{\mcF_t[\p_t\Phi_t]-\int R_t^2 \mcF_t[\Phi_t] \Phi_t dx}  T_tR_t^2 dx \\
&-\sqrt{\beta}\int \p_tT_t \mcF_t[\Phi_t] R_t^2 dx-\sqrt{\beta}\int (\mcF[\Phi_t])^2T_t R_t^2 dx \\
&+\frac{\beta\sqrt{\beta}}{2}\int  T_t^2R_t^2dx+\beta \int \p_t T_t T_t R_t^2dx+\frac{\beta}{2} \int T_t^2 \mcF_t[\Phi_t]R_t^2 dx\\
&+ \sqrt{\beta} (E(\rho_t)-E(\rho^*))+\int \mcF_t[\Phi_t]\mcF_t\bb{\frac{\delta E}{\delta \rho_t}} R_t^2 dx. 
\end{aligned}
$$
From Proposition \ref{prop:cvx_fr}, we have
$$
\begin{aligned}
 &\sqrt{\beta} (E(\rho_t)-E(\rho^*))+\frac{\beta\sqrt{\beta}}{2}\int  T_t^2R_t^2dx\leq-\sqrt{\beta} \int\mcF_t\bb{\frac{\delta E}{\delta \rho_t}}T_t\rho_tdx. 
\end{aligned}
$$
We first compute terms with coefficient $\beta^0$. We have
$$
\begin{aligned}
&\int \mcF_t[\Phi_t]\pp{\mcF_t[\p_t\Phi_t]-\int R_t^2 \mcF_t[\Phi_t] \Phi_t dx} R_t^2 dx\\
&+\frac{1}{2}\int (\mcF_t[\Phi_t])^2 \mcF_t[\Phi_t] R_t^2 dx+\int \mcF_t[\Phi_t]\mcF_t\bb{\frac{\delta E}{\delta \rho_t}}R_t^2 dx\\
=&\int \mcF_t[\Phi_t] \p_t\Phi_t R_t^2dx+\frac{1}{2}\int (\mcF_t[\Phi_t])^2 \mcF_t[\Phi_t] R_t^2 dx+\int \mcF_t[\Phi_t]\frac{\delta E}{\delta \rho_t} R_t^2 dx\\
=&\int \mcF_t[\Phi_t]\pp{-\sqrt{\beta}\Phi_t-\frac{1}{2}\Phi_t^2+\mbE_{R_t^2}[\Phi_t]\Phi_t+\frac{1}{2}\mcF_t[\Phi_t]^2} R_t^2 dx\\
=&\int \mcF_t[\Phi_t]\pp{-\sqrt{\beta}\Phi_t+\frac{1}{2}(\mbE_{R_t^2}[\Phi_t])^2} R_t^2 dx\\
=&-2\sqrt{\beta} \int \mcF_t[\Phi_t] \Phi_t R_t^2 dx.
\end{aligned}
$$
We then proceed to compute terms with coefficient $\beta^{1/2}$. 
$$
\begin{aligned}
&\frac{\sqrt{\beta}}{2}\int (\mcF_t[\Phi_t])^2R_t^2dx-\sqrt{\beta}\int \pp{\mcF_t[\p_t\Phi_t]-\int R_t^2 \mcF_t[\Phi_t] \Phi_t dx}  T_tR_t^2 dx\\
&-2\sqrt{\beta} \int \mcF_t[\Phi_t] \Phi_t R_t^2 dx-\sqrt{\beta}\int \p_tT_t \mcF_t[\Phi_t] R_t^2 dx-\sqrt{\beta}\int (\mcF[\Phi_t])^2T_t R_t^2 dx\\
&-\sqrt{\beta} \int \mcF_t\bb{\frac{\delta E}{\delta \rho_t}}T_t\rho_tdx\\
=&-\frac{3\sqrt{\beta}}{2}\int (\mcF_t[\Phi_t])^2R_t^2dx-\sqrt{\beta}\int \p_t\Phi_t T_tR_t^2 dx-\sqrt{\beta}\int \p_tT_t \mcF_t[\Phi_t] R_t^2 dx\\
&-\sqrt{\beta}\int (\mcF[\Phi_t])^2T_t R_t^2 dx-\sqrt{\beta} \int \frac{\delta E}{\delta \rho_t}T_tR_t^2dx\\
=&-\sqrt{\beta} \int T_tR_t^2\pp{\p_t\Phi_t+\frac{\delta E}{\delta \rho_t}+\frac{1}{2}(\mcF[\Phi_t])^2}-\frac{3\sqrt{\beta}}{2}\int (\mcF_t[\Phi_t])^2R_t^2dx\\
&-\sqrt{\beta}\int \p_tT_t \mcF_t[\Phi_t] R_t^2 dx-\frac{\sqrt{\beta}}{2}\int (\mcF[\Phi_t])^2 T_t R_t^2 dx\\
\leq &2\beta\int T_t\Phi_t R_t^2-\frac{\sqrt{\beta}}{2}\int (\mcF_t[\Phi_t])^2R_t^2dx.
\end{aligned}
$$
The last inequality is based on Lemma \ref{lem:obsv}.  Finally, we compute terms with coefficient $\beta$:

$$
\begin{aligned}
&2\beta\int T_t\Phi_t R_t^2dx-\beta \int \Phi_t T_tR_t^2 dx+\beta \int \p_t T_t T_t R_t^2 dx+ \frac{\beta}{2} \int T_t^2 \mcF_t[\Phi_t]R_t^2 dx=0.\\
\end{aligned}
$$

In summary, we have
$$
e^{-\sqrt{\beta }t}\p_t\mcE(t) \leq-\frac{\sqrt{\beta}}{2}\int (\mcF_t[\Phi_t])^2R_t^2dx\leq 0.
$$

For convex $E(\rho)$, we let $\alpha_t=3/t$. Consider
$$
\mcE(t)=\frac{1}{2} \int \pp{-T_t+\frac{t}{2}\Phi_t}^2R_t^2dx+\frac{t^2}{4}(E(R_t^2)-E(\rho^*)).
$$
We can compute that
$$
\begin{aligned}
\dot \mcE(t) =&\int  (\p_t T_t) T_t R^2_t dx+\frac{1}{2}\int T_t^2\mcF[\Phi_t] R_t^2dx-\frac{1}{2}\int T_t\Phi_tR_t^2dx\\
&-\frac{t}{2}\int T_t\pp{\p_t\Phi_t}R_t^2dx- \frac{t}{2}\int (\p_t T_t)\Phi_t R_t^2 dx\\
&-\frac{t}{2}\int T_t(\mcF_t[\Phi_t])^2 R_t^2 dx+\frac{t}{4} \int (\mcF_t[\Phi_t])^2R_t^2dx\\
&+\frac{t^2}{4} \int (\p_t \mcF_t[\Phi_t]) \mcF_t[\Phi_t] R_t^2dx+\frac{t^2}{8} \int (\mcF_t[\Phi_t])^3 R_t^2 dx\\
&-\frac{t^2}{4}\int \mcF_t\bb{\frac{\delta E}{\delta \rho_t}}\mcF_t[\Phi_t]R_t^2dx+\frac{t}{2}(E(R^2_t)-E(\rho^*)).\\
\end{aligned}
$$
Because $E(\rho)$ is convex, we have
$$
E(R^2_t)-E(\rho^*)\leq -\int \mcF_t\bb{\frac{\delta E}{\delta \rho_t}}T_tR_t^2dx.
$$
From Lemma \ref{lem:obsv}, we have
$$
\begin{aligned}
\dot \mcE(t) \leq &-\frac{3}{2}\int T_t\Phi_tR_t^2dx-\frac{t}{2}\int T_t\pp{\p_t\Phi_t}R_t^2dx \\
&-\frac{t}{4}\int T_t(\mcF_t[\Phi_t])^2 R_t^2 dx+\frac{3t}{4} \int (\mcF_t[\Phi_t])^2R_t^2dx\\
&+\frac{t^2}{4} \int (\p_t \Phi_t) \mcF_t[\Phi_t] R_t^2dx+\frac{t^2}{8} \int (\mcF_t[\Phi_t])^3 R_t^2 dx\\
&-\frac{t^2}{4}\int \frac{\delta E}{\delta \rho_t}\mcF_t[\Phi_t]R_t^2dx-\frac{t}{2}\int \mcF_t\bb{\frac{\delta E}{\delta \rho_t}}T_tR_t^2dx\\
=&-\frac{3}{2}\int T_t\Phi_tR_t^2dx-\frac{t}{2}\int  T_tR_t^2\pp{\p_t\Phi_t+\frac{1}{2}(\mcF_t[\Phi_t])^2+\frac{\delta E}{\delta \rho_t}}\\
&+\frac{3t}{4} \int (\mcF_t[\Phi_t])^2R_t^2dx+\frac{t^2}{4}\int \mcF_t[\Phi_t] R_t^2\pp{\p_t\Phi_t+\frac{1}{2}(\mcF_t[\Phi_t])^2+\frac{\delta E}{\delta \rho_t}}dx\\
=&0.
\end{aligned}
$$
The last equality utilize the fact that $\p_t\Phi_t+\frac{1}{2}(\mcF_t[\Phi_t])^2+\frac{\delta E}{\delta \rho_t}=-\frac{3}{t}\Phi_t$. 

\section{Discrete-time algorithm of AIG flows}
In this section, we introduce the discrete-time algorithm for Kalman-Wasserstein AIG flows and Stein AIG flows. Here $E(\rho)$ is the KL divergence from $\rho$ to $\rho^*\propto \exp(-f)$.
\subsection{Discrete-time algorithm of KW-AIG flows}
For KL divergence, the particle formulation \eqref{equ:kw_aig_p} of KW-AIG flows writes
\begin{equation}\label{equ:kw_aig_p_kl}
\left\{
\begin{aligned}
&dX_t = C^\lambda(\rho_t)V_t dt,\\ 
&dV_t =-\alpha_tV_t dt-\mathbb{E}[V_tV_t^T](X_t-\mbE[X_t])dt-(f(X_t)+\nabla \log \rho_t(X_t)) dt.
\end{aligned}\right.
\end{equation}
Consider a particle system $\{X_0^i\}_{i=1}^N. $In $k$-th iteration, the update rule follows: for $i=1,2,\dots N$,
\begin{equation}\label{equ:vx_1}
\left\{
\begin{aligned}
&X_{k+1}^i =X_{k}^i +\sqrt{\tau_k}C^\lambda_k V_k,\\ 
&V_{k+1} =\alpha_k V_k-\sqrt{\tau_k}\left[\sum_{i=1}^N(V_k^i)(V_k^i)^T\right](X_{k}^i-m_k)-\sqrt{\tau_k}(f(X_k^i)+\xi_k(X_k^i)).
\end{aligned}\right.
\end{equation}
Here $\xi_k$ is an approximation of $\nabla\log \rho_k$ and we denote
$$
m_k = \frac{1}{N}\sum_{i=1}^N X_k^i,\quad C^\lambda_k = \frac{1}{N-1}\sum_{i=1}^N (X_k^i-m_k)(X_k^i-m_k)^T+\lambda I.
$$
The choice of $\alpha_k$ is similar to the discrete-time algorithm of W-AIG flows. If $E(\rho)$ is $\beta$-strongly convex, then $\alpha_k = \frac{1-\sqrt{\beta\tau_k}}{1+\sqrt{\beta\tau_k}}$; if $E(\rho)$ is convex or $\beta$ is unknown, then $\alpha_k = \frac{k-1}{k+2}$. 

About the adaptive restart technique, the restarting criterion follows
\begin{equation}\label{phik_1}
    \varphi_k = - \sum_{i=1}^N\la C^\lambda_kV_{k+1}^i, \nabla f(X_k^i)+\xi_k(X_k^i)\ra.
\end{equation}
The overall algorithm is summarized as follows.
\begin{algorithm}[!htp]
\caption{Discrete-time particle implementation of KW-AIG flow}
\label{alg:kw_aig}
\begin{algorithmic}[1]
\REQUIRE initial positions $\{X_0^i\}_{i=1}^N$, step size $\tau_k$, number of iteration $L$.
\STATE Set $k=0$, $V_0^i=0, i=1,\dots N$. Set the bandwidth $h_0$ by MED. 
\FOR{$l=1,2,\dots L$}
\STATE Compute $h_l$ based on BM method:  $h_l=\text{BM}(h_{l-1},\{X_k^i\}_{i=1}^N,\sqrt{\tau})$.
\STATE Calculate $\xi_k(X_k^i)$ as an approximation of $\nabla \log \rho_k(X_k^i)$. 
\STATE %Set $\alpha_k$ based on whether $E(\rho)$ is Hess($\beta$) or Hess($0$). 
For $i=1,2,\dots N$, update $V_{k+1}^i$ and $X_{k+1}^i$ by \eqref{equ:vx_1}.
%
% $V_{k+1}^i = \alpha_kV_k^i-\sqrt{\tau}(\nabla f(X_k^i)+\xi_k(X_k^i)),\quad X_{k+1}^i = X_k^i+\sqrt{\tau} V_{k+1}^i$.
%\IF{RESTART}
\STATE Compute $\varphi_k$ by \eqref{phik_1}. 
\STATE If $\varphi_k<0$, set $X_0^i=X_k^i$ and $V_0^i=0$ and $k=0$; otherwise set $k=k+1$. 
% \ELSE
% \STATE Set $k=k+1$. 
% \ENDIF
\ENDFOR
\end{algorithmic}
\end{algorithm}

\subsection{Discrete-time algorithm for S-AIG flows}
For KL divergence, the particle formulation of S-AIG flows writes
\begin{equation}\label{equ:s_aig_p_kl}
\left\{
\begin{aligned}
&\frac{d}{dt} X_t = \int k(X_t,y) \nabla\Phi_t(y)\rho_t(y)dy,\\
&\frac{d}{dt}  V_t =-\alpha_t V_t-\int V_t^T\nabla \Phi_t (y) \nabla_x k(X_t,y) \rho_t(y)dy-\nabla f(X_t)-\nabla\log \rho_t.
\end{aligned}\right.
\end{equation}
Consider a particle system $\{X_0^i\}_{i=1}^N$. In $k$-th iteration, the update rule follows: for $i=1,2,\dots N$,
\begin{equation}\label{equ:kx1}
\left\{
\begin{aligned}
& X_{k+1}^i = X_k^i+\frac{\sqrt{\tau_k}}{N}\sum_{j=1}^N k(X_k^i,X_k^j) V_{k+1}^j,\\
&V_{k+1}^i =\alpha_k V_k^i-\frac{\sqrt{\tau_k}}{N}\sum_{j=1}^N (V_k^i)^TV_k^j\nabla_x k(X_k^i,X_k^j) -\sqrt{\tau_k}(\nabla f(X_k^i)+\xi_k(X_k^i)).
\end{aligned}\right.
\end{equation}
Here $\xi_k$ is an approximation of $\nabla\log \rho_k$. The choice of $\alpha_k$ is similar, depending on the convexity of $E(\rho)$ w.r.t. Stein metric.

About the adaptive restart technique, the restarting criterion follows
\begin{equation}\label{phik_2}
    \varphi_k = - \sum_{i=1}^N\sum_{j=1}^Nk(X_k^j,X_k^i)\la V_{k+1}^j, \nabla f(X_k^i)+\xi_k(X_k^i)\ra.
\end{equation}

The overall algorithm is summarized as follows.
\begin{algorithm}[!htp]
\caption{Discrete-time particle implementation of S-AIG flow}
\label{alg:s_aig}
\begin{algorithmic}[1]
\REQUIRE initial positions $\{X_0^i\}_{i=1}^N$, step size $\tau_k$, number of iteration $L$.
\STATE Set $k=0$, $V_0^i=0, i=1,\dots N$. Set the bandwidth $h_0$ by MED. 
\FOR{$l=1,2,\dots L$}
\STATE Compute $h_l$ based on BM method:  $h_l=\text{BM}(h_{l-1},\{X_k^i\}_{i=1}^N,\sqrt{\tau})$.
\STATE Calculate $\xi_k(X_k^i)$ as an approximation of $\nabla \log \rho_k(X_k^i)$. 
\STATE %Set $\alpha_k$ based on whether $E(\rho)$ is Hess($\beta$) or Hess($0$). 
For $i=1,2,\dots N$, update $V_{k+1}^i$ and $X_{k+1}^i$ by \eqref{equ:kx1}.
%
% $V_{k+1}^i = \alpha_kV_k^i-\sqrt{\tau}(\nabla f(X_k^i)+\xi_k(X_k^i)),\quad X_{k+1}^i = X_k^i+\sqrt{\tau} V_{k+1}^i$.
%\IF{RESTART}
\STATE Compute $\varphi_k$ by \eqref{phik_2}. 
\STATE If $\varphi_k<0$, set $X_0^i=X_k^i$ and $V_0^i=0$ and $k=0$; otherwise set $k=k+1$. 
% \ELSE
% \STATE Set $k=k+1$. 
% \ENDIF
\ENDFOR
\end{algorithmic}
\end{algorithm}

\section{Implementation details in the numerical experiments}
\label{app:num}
In this section, we provide extra numerical experiments and elaborate on the implementation details in the numerical experiments.

\subsection{Details in Subsection 6.1}
We follow the same setting as \cite{SVGD}, which is also adopted by \cite{afomo,uaapb}. The dataset is split into $80\%$ for training and $20\%$ for testing. We use the stochastic gradient and the mini-batch size is taken as $100$. For MCMC, the number of particles is $N=1000$; for other methods, the number of particles is $N=100$. The BM method is not applied to SVGD in selecting the bandwidth. 

The initial step sizes for the compared methods are given in Table \ref{tab:step}, which are selected by grid search over $1\times 10^i$ with $i=-3,-4,\dots ,-9$. (For SVGD, we use the initial step size in \citep{SVGD}.) The step size of SVGD is adjusted by Adagrad, which is same as \citep{SVGD}. For WNAG and WRes, the step size is give by $\tau_l = \tau_0/l^{0.9}$ for $l\geq 1$. The parameters for WNAG and Wnes are identical to \citep{afomo} and \citep{uaapb}. For other methods, the step size is multiplied by $0.9$ every $100$ iterations. For methods under Kalman-Wasserstein metric, we require a smaller step size (around 1e-8) to make the algorithm converge. For all discrete-time algorithms of AIGs, we apply the restart technique.  
\begin{table}[!htbp]
    \centering
    \begin{tabular}{|c|c|c|c|c|c|}
    \hline
         Method&MCMC&WNAG  &   WNes  &   W-GF   & W-AIG  \\\hline
         Step size $\tau_0$&1e-5&1e-6&1e-5&1e-5&1e-6\\\hline
         Method & KW-GF  & KW-AIG    & SVGD &   S-AIG&\\\hline
         Step size $\tau_0$&1e-7&1e-8&0.05&1e-5&\\\hline
    \end{tabular}
    \caption{Initial step sizes for compared algorithms in Bayesian logistic regression.}
    \label{tab:step}
\end{table}
We record the cpu-time for each method in Table \ref{tab:time}. The computational cost of the BM method is much higher than the MED method because we need to evaluate the MMD of two particle systems several times in optimizing the subproblem. We may update the bandwidth using the BM method every 10 iterations to deal with the high computation cost of the BM method. On the other hand, using the MED method for bandwidth, the computational cost of S-AIG is much higher than other methods. This results from the multiple times of computation of particle interacting in updating $X_k^i$ and $V_k^i$. 
\begin{table}[!htbp]
    \centering
    \begin{tabular}{|c|c|c|c|c|c|}
    \hline
         Method&MCMC&WNAG  &   WNes  &   W-GF   & W-AIG  \\\hline
         BM&26.181  &164.980  &165.407 & 167.308  &170.116  \\\hline
         MED& 27.200   & 7.585  &  7.688  &  7.501  &  7.719 \\\hline
         Method& KW-GF  & KW-AIG    & SVGD &   S-AIG &\\\hline
         BM&168.711  &173.670    &7.193  &200.016& \\\hline
         MED& 8.847  & 10.065   & 7.755 &  21.303& \\\hline
    \end{tabular}
    \caption{Averaged cpu\ time(s) cost for algorithms in Bayesian logistic regression.}
    \label{tab:time}
\end{table}

\subsection{Details in Subsection 6.2}
We follow the setting of Bayesian neural network as \citep{svgd_mat}. The kernel bandwidth is adjusted by the MED method. We list the number of epochs and the batch size for each datasets in Table \ref{tab:num_epoch}. For each dataset, we use $90\%$ of samples as the training set and $10\%$ of samples as the test set. The step size of SVGD is adjusted by Adagrad. For W-GF and W-AIG , the step size is multiplied by 0.64 every $1/10$ of total epochs. We select the initial step size by grid search over $\{1,2,5\}\times 10^i$ with $i=-3,-4,\dots ,-7$ to ensure the best performance of compared methods. We list the initial step sizes for each dataset in Table \ref{tab:bnn_step}. For W-AIG, we apply the adaptive restart.

\begin{table}[!htbp]
    \centering
    \begin{tabular}{|c|c|c|c|c|}
    \hline
         Dataset&Boston&Combined&Concrete\\\hline
         Epochs&50&500&500\\\hline
         Batch size&100&100&100\\\hline
         Dataset&Kin8nm&Wine&Year\\\hline
         Epochs&200&20&10\\\hline
         Batch size&100&100&1000\\\hline
    \end{tabular}
    \caption{Number of epochs and batch size in Bayesian neural network.}
    \label{tab:num_epoch}
\end{table}

\begin{table}[!htbp]
    \centering
    \begin{tabular}{|c|c|c|c|c|}
    \hline
         Dataset&Boston&Combined&Concrete\\\hline
         AIG&2e-5&2e-4&2e-5\\\hline
         WGF&1e-4&1e-3&2e-5\\\hline
         SVGD&5e-4&5e-3&5e-4\\\hline
         Dataset&Kin8nm&Wine&Year\\\hline
         AIG&2e-5&5e-6&2e-7\\\hline
         WGF&1e-4&1e-4&2e-6\\\hline
         SVGD&5e-3&2e-3&5e-3\\\hline
    \end{tabular}
    \caption{Initial step sizes for compared methods in Bayesian neural network.}
    \label{tab:bnn_step}
\end{table}

\end{document}